%% file: main.tex
\documentclass[12pt]{article}

\usepackage{amssymb}
\usepackage{amsmath}
\usepackage{amsthm}

\usepackage[hidelinks]{hyperref}

\usepackage{url}
\usepackage{doi}

\input{macros}

\usepackage{tikz}
\usetikzlibrary{quotes}

\usepackage[capitalize]{cleveref}

\theoremstyle{plain}
\newtheorem{theorem}{Theorem}[section]
\newtheorem{lemma}[theorem]{Lemma}
\newtheorem{proposition}[theorem]{Proposition}
\newtheorem{corollary}[theorem]{Corollary} 

\theoremstyle{definition}
\newtheorem{definition}[theorem]{Definition}
\newtheorem{remark}[theorem]{Remark} 
\newtheorem{example}[theorem]{Example} 

  \crefname{fig}{Figure}{Figures}
  \crefname{thm}{Theorem}{Theorems}
  \crefname{lem}{Lemma}{Lemmas}
  \crefname{def}{Definition}{Definitions}
  \crefname{prop}{Proposition}{Propositions}
  \crefname{cor}{Corollary}{Corollaries}
  \crefname{ex}{Example}{Examples}
  \crefname{rem}{Remark}{Remarks}
  \crefname{asm}{Assumption}{Assumptions}

\begin{document}

\title{Quotients-comprehensions Duality in Relational Doctrines}

\author{Francesco Dagnino\thanks{University of Genoa, DIBRIS, Italy. Email: \texttt{francesco.dagnino@unige.it}}
\and
Fabio Pasquali\thanks{University of Milan, Department of Mathematics, Italy. Email: \texttt{fabio.pasquali@unimi.it}}}

\date{} 

\maketitle

\begin{abstract}
We establish a duality between quotients and comprehensions in relational doctrines, a class of indexed posets modelling (a minimal fragment of) the calculus of relations. 
We show that every relational doctrine determines an ``opposite" relational doctrine and that one has quotients if and only if the other has comprehensions. 
This duality extends to the 2-categorical level, yielding a 2-dual isomorphism between the 2-categories of relational doctrines with quotients and that with comprehensions, allowing to derive results on relational doctrines with comprehensions from their dual counterparts for quotients, and viceversa. 
We also study the interaction between quotients and comprehensions and their relative completions, giving sufficient conditions for the induced 2-monads to be composable, \ie sufficient conditions ensuring that applying both the completions, the second one preserves the properties added by the first one.
\end{abstract}

\noindent\textbf{Keywords:} calculus of relations, comprehensions, quotients, duality\\[0.5em]

\bigskip

\input{intro}

\input{intro2}

\input{prelim}

\input{predicates}

\input{duality}

\input{qc-interaction}

\input{conclu}

\bibliographystyle{plainurl} 
\bibliography{biblio}

\end{document}

%% file: macros.tex
\usepackage{etoolbox} 
\usepackage{xcolor} 
\usepackage{xspace}
\usepackage{amsmath}
\usepackage{bbold}
\usepackage[all]{xy}
\usepackage{mathtools}
\usepackage[mathscr]{euscript}

\def\bfd{\begin{color}{orange}}
\def\efd{\end{color}} 

\def\eg{{\em e.g.}\xspace} \def\ie{{\slshape i.e.}\xspace}

\def\ple#1{\ensuremath{{\langle #1 \rangle }}} 

\def\vuoto{}
\newcommand{\FUN}[4]{\ensuremath{{#2}#1{#3}\rightarrow{#4}}}
\newcommand{\fun}[3]{\relax\def\testa{#1}\relax\ifx\testa\vuoto
  \relax\FUN{}{}{{#2}}{{#3}}\else\relax\FUN{:}{{#1}}{{#2}}{{#3}}\fi}

\newcommand{\id}{\mathsf{id}}
\newcommand{\PW}[2][]{\ensuremath{\mathop{\mathscr{P}_{#1}{#2}}}}
\newcommand{\pw}[1]{\relax\def\testa{#1}\relax\ifx\testa\vuoto
  \relax\PW{}\else\relax\PW{\left(#1\right)}\fi}
\newcommand{\fpw}[1]{\relax\def\testa{#1}\relax\ifx\testa\vuoto
  \relax\PW[\omega]{}\else\relax\PW[\omega]{\left(#1\right)}\fi}

\newcommand{\R}{\mathbb{R}} 
\newcommand{\RPos}{\R_{\ge 0}} 

\newcommand{\PP}{\pw{}} 
 
\newcommand{\Rel}{\mathsf{Rel}}

\def\tt{\ensuremath{\textsf{t\kern-.3ex t}}}
\def\ff{\ensuremath{\textsf{f\kern-.3ex f}}}

\let\ForalL\forall \def\Forall#1.{\ForalL_{#1}}
\let\ExistS\exists \def\Exists#1.{\ExistS_{#1}}

\DeclareFontFamily{OT1}{pzc}{}
\DeclareFontShape{OT1}{pzc}{m}{it}{<->s*[1.30]pzcmi7t}{}
\DeclareMathAlphabet{\mathpzc}{OT1}{pzc}{m}{it}
\def\ct#1{\ensuremath{\mathpzc{#1}}}
\def\Ct#1{\ensuremath{\mathbf{#1}}}

\def\op{^{\mbox{\normalfont\scriptsize op}}}

\def\co{^{\mbox{\normalfont\scriptsize co}}}
\newcommand{\bop}[1]{(#1\times#1)\op} 
\def\blank{\mathchoice{\mbox{--}}{\mbox{--}}
{\mbox{\scriptsize--}}{\mbox{\tiny--}}}

\newcommand{\CC}{\ct{C}\xspace}

\newcommand{\D}{\ct{D}\xspace}

\newcommand{\oneAr}[3]{\ensuremath{{#1}:{#2}\rightarrow{#3}}}
\newcommand{\twoAr}[3]{\ensuremath{{#1}:{#2}\Rightarrow{#3}}}
\def\tdot{\textbf{.}}
\def\lsta{\vrule depth4pt width0pt}
\def\lstb{\vrule height5pt width0pt}
\def\arnta[#1]{\ar[#1]|-*=0[@]{\lsta\tdot}}
\def\arntb[#1]{\ar[#1]|-*=0[@]{\lstb\tdot}}
\newcommand{\nt}[3]{\ensuremath{{#1}:{#2} \stackrel{\makebox{\kern-.3ex\tdot}}\rightarrow{#3}}}
\newcommand{\lnt}[3]{\ensuremath{{#1}:{#2} \stackrel{\makebox{\kern-.3ex\tdot}}\rightarrow_l{#3}}}

\newcommand{\Id}{\mathsf{Id}}

\newcommand{\Set}{\ct{Set}\xspace}

\newcommand{\Pos}{\ct{Pos}\xspace}

\newcommand{\EED}{\Ct{EED}\xspace}
\newcommand{\RDtn}{\Ct{RD}\xspace}

\newcommand{\QRDtn}{\Ct{QRD}\xspace}

\newcommand{\ERDtn}{\Ct{ERD}\xspace} 
 
\newcommand{\EQRDtn}{\Ct{EQRD}\xspace}

\newcommand{\order}{\leq} 
\newcommand{\oporder}{\geq} 
\newcommand{\PDoc}{P}

\newcommand{\RDoc}{R}

\newcommand{\SDoc}{S} 
\newcommand{\reidx}[1]{_{#1}}

\newcommand{\fn}[1]{\widehat{#1}}
\newcommand{\lift}[1]{\overline{#1}}

\newcommand{\relr}{\alpha}
\newcommand{\rels}{\beta}
\newcommand{\relt}{\gamma} 
\newcommand{\relz}{\epsilon} 
\newcommand{\eqrelr}{\rho}
\newcommand{\eqrels}{\sigma}
 
\newcommand{\rid}{\mathsf{d}}
\newcommand{\rcomp}{\mathop{\mathbf{;}}}
\newcommand{\rconv}{^{\bot}} 
\newcommand{\rdconv}{^{\bot\bot}} 
\newcommand{\gr}[1]{\Gamma_{#1}}

\newcommand{\QC}[1]{\ensuremath{\ct{Q}_{#1}}\xspace} 
\newcommand{\QR}[1]{({#1})^q}

\newcommand{\QRFun}{\mathrm{U_q}}
\newcommand{\RQFun}{\mathrm{Q}} 
\newcommand{\CRFun}{\mathrm{U_c}}
\newcommand{\RCFun}{\mathrm{C}}
\newcommand{\QMnd}{\mathrm{T_q}}
\newcommand{\CMnd}{\mathrm{T_c}}

\newcommand{\REQFun}{\mathrm{EQ}}
\newcommand{\EQR}[1]{(#1)^{eq}} 
\newcommand{\EQC}[1]{\ensuremath{\ct{EQ}_{#1}}\xspace}

\newcommand{\VRel}[1]{{#1}\text{-}\mathsf{Rel}}

\newcommand{\Qtl}{V}

\newcommand{\Car}[1]{|#1|}
\newcommand{\qord}{\preceq}
\newcommand{\qmul}{\cdot}
\newcommand{\qone}{1} 
\newcommand{\qinv}{\iota} 
\newcommand{\qsup}{\bigvee}

\newcommand{\Sub}{\mathsf{Sub}}

\newcommand{\fmul}{\ast}
\newcommand{\fone}{\kappa}
\newcommand{\finv}{\iota}

\def\RB#1{\mathchoice
  {\rotatebox[origin=c]{180}{$#1$}}
  {\rotatebox[origin=c]{180}{$#1$}}
  {\rotatebox[origin=c]{180}{$\scriptstyle#1$}}
  {\rotatebox[origin=c]{180}{$\scriptscriptstyle#1$}}}
\def\Ex{\RB{E}\kern-.3ex}
\def\Al{\RB{A}\kern-.6ex}

\newcommand{\Span}[1]{\mathsf{Spn}^{#1}}

\newcommand{\spn}[5]{\ensuremath{#1 \xleftarrow{#2} #3 \xrightarrow{#4} #5}}

\newcommand{\vecx}{{\bf x}}
\newcommand{\vecy}{{\bf y}} 

\newcommand{\veczero}{{\bf 0}}

\newcommand{\jmSpan}[1]{\mathsf{JmSpn}^{#1}}

\newcommand{\mnd}{\mathbb{T}}

\newcommand{\xpr}[2][]{\pi_{#2}\ifblank{#1}{}{^{#1}}} 
\newcommand{\fpr}[1][]{\xpr[#1]{1}} 
\newcommand{\spr}[1][]{\xpr[#1]{2}}

\newcommand{\TerAr}{\mathbf{!}}
\newcommand{\DiagAr}{\mathbf{\Delta}}

\newcommand{\PtoR}[1]{\ensuremath{\Rel(#1)}\xspace}

\newcommand{\Pred}[1]{\mathsf{Pred}_{#1}} 
\newcommand{\preda}{\phi}
\newcommand{\predb}{\psi}
\newcommand{\predc}{\xi}
\newcommand{\bdrit}[1]{#1\times#1} 
\newcommand{\OP}{\Omega}
\newcommand{\CRDtn}{\Ct{CRD}\xspace} 
\newcommand{\ECRDtn}{\Ct{ECRD}\xspace} 
\def\co{^{\mbox{\normalfont\scriptsize co}}}

\newcommand{\CR}[1]{(#1)^{c}} 
\newcommand{\CCt}[1]{\ensuremath{\ct{P}_{#1}}\xspace} 
\newcommand{\ECR}[1]{(#1)^{ec}} 
\newcommand{\ECCt}[1]{\ensuremath{\ct{EP}_{#1}}\xspace} 

\newcommand{\img}{\operatorname{Img}}

\newcommand{\PERC}[1]{\ct{PER}_{#1}} 
\newcommand{\PERR}[1]{(#1)^{per}}

%% file: intro.tex
\section{Introduction}
\label{sect:intro}

Starting from the pioneering work by Lawvere \cite{Lawvere69,Lawvere70}, 
indexed posets, or, equivalently, faithful fibrations, often referred to as (hyper)doctrines, 
have been fruitfully adopted in the study of logics and foundations  (see \eg \cite{JacobsB:catltt, PittsCL} and references therein). 
They provide a very general framework enabling an algebraic treatment of fundamental mathematical concepts. 
Specifically, they allow one to identify the minimal structure needed to formulate these concepts, study their properties and interactions, and describe universal constructions to add them to contexts where they are not natively available.

Among other applications, significant effort has been devoted to studying quotients and comprehensions within doctrines \cite{Lawvere70,MaiettiME:eleqc,MaiettiME:quofcm}. 
Intuitively, quotients enable the construction of a new object from an internal equivalence relation, where the relation becomes the equality, while 
 comprehension algebraizes, and generalizes, the separation axiom of set theory, by permitting the formation of subobjects from internal predicates.
These two notions exhibit a certain degree of symmetry.
Indeed, a quotient for an equivalence relation on an object $X$ is an arrow out of $X$ that is initial among those whose kernel contains the given equivalence relation,
while a comprehension for a predicate on an object $X$ is an arrow into $X$ that is terminal among those whose image is contained in the given predicate.
More formally, following \cite{Mellies},
one can organize predicates and equivalence relations into two fibrations:
comprehensions are then given by a right adjoint to a right-adjoint section of the fibration of predicates \cite{JacobsB:catltt,Ehrhard88},
while quotients are given by a left adjoint to a left-adjoint section of the fibration of equivalence relations.
However, this is not enough to make these concepts fully dual to each other:
quotients deal with binary predicates satisfying specific properties (namely, being equivalence relations), whereas comprehensions operate on unary predicates with no further conditions.
So the two concepts are governed by dual categorical structures, but defined on different fibrations that are not straightforwardly linked to one another.

In this paper, we show that the full duality between quotients and comprehensions can be recovered by looking at them through the lens of the calculus of relations \cite{Tarski41,Givant1}.
This means considering a class of doctrines, called relational doctrines \cite{DagninoP23,DagninoP25tac,DagninoP25apal},
which model the core fragment of the calculus of relations: relational identity, composition, and converse.
Our central observation is that
relational doctrines possess an intrinsic duality that is unavailable for doctrines modeling standard predicate logic.
Specifically, given a relational doctrine $\RDoc$, we can construct its opposite $\RDoc\op$, which is again a relational doctrine.
We then prove that $\RDoc$ has quotients for equivalence relations if and only if $\RDoc\op$ has comprehensions for predicates.
This requires identifying predicates in a relational setting with the dual of equivalence relations, \ie coequivalence relations.
As discussed in \cref{sect:intro2}, this choice is less arbitrary than it may appear; it aligns both with definitions in more restrictive settings \cite{FreydS90, Kozen1997Kleene} and with reference examples.
Furthermore, we show that this duality lifts to the 2-categorical level, giving rise to a 2-dual isomorphism between the 2-categories of relational doctrines with quotients and comprehensions, respectively. 
A major consequence of this result is that properties of doctrines with comprehensions can be directly derived from dual properties of doctrines with quotients, 
which have been already studied in the literature \cite{DagninoP25apal,DagninoP26jpaa}, and viceversa.

The rest of the paper is organized as follows.
In \cref{sect:intro2}, we introduce and motivate the choice of identifying predicates with coequivalence relations at the simple propositional level.
\cref{sect:prelim} recalls basic notions and results concerning relational doctrines and quotients, and
in \cref{sect:predicates}, we introduce predicates and comprehensions for relational doctrines.
In \cref{sect:duality}, we describe the duality between quotients and comprehensions in relational doctrines.
By leveraging it, we derive comprehension completions and the associated 2-monadicity results from those for quotients,
characterize them using 
injective objects, and
describe the factorization system induced by comprehensions.
In \cref{sect:q-c-interaction}, we study the interaction between quotients and comprehensions, showing that when both are present, a version of the first isomorphism theorem holds.
Moreover, we show that the completions for quotients and comprehensions do not compose in general, and we provide sufficient conditions for this to be possible.
Finally, \cref{sect:conclu} concludes the paper.

%% file: intro2.tex
\section{Predicates as coequivalence relation: a propositional overview}
\label{sect:intro2}

In this section we introduce and motivate the identification of predicates in the calculus of relations with coequivalence relations. 
To keep things simple, we will work with variants of lattices and relation algebras, that is, we will work at the propositional level. 

Let us start by analysing the reference example for predicates  and relations. 
Given a set $U$, the predicates  on it can be identified with the powerset $\PP(U)$ and the (binary) relations on it with the powerset $\PP(U\times U)$, both ordered by subset inclusion. 
In addition to the standard boolean operations (union, intersection and complement), the poset $\PP(U\times U)$ carries   
a binary operation given by relational composition, denoted by $\relr\rcomp\rels$,  whose neutral element is the identity or diagonal relation $\rid$, and 
a unary operation mapping a relation $\relr$ to its converse $\relr\rconv$. 
The direct image along the diagonal function $\fun{\Delta_U[-]}{\PP(U)}{\PP(U\times U)}$ is monotone and reflects the order (categorically, it is full and faithful). As such, it produces a copy of $\PP(U)$ inside $\PP(U\times U)$. 
Then, one can say that in $\PP(U\times U)$ there are relations that can be regarded as predicates  on $U$. 
The question now is whether we can characterize these relations solely in terms of the relational operations. 
We observe the following properties of these elments:
\begin{enumerate}
\item\label{delta:1} $\Delta[A]$ is coreflexive, \ie $\Delta[A]\subseteq\rid$, 
\item\label{delta:2} $\Delta[A]$ is symmetric, \ie $\Delta[A] = \Delta[A]\rconv$, 
\item\label{delta:3} for every $\relr$ in $\PP(U\times U)$, the relation $\Delta[A]\rcomp\relr\rcomp\Delta[B]$ is the restriction of $\relr$ to $A$ on the left and $B$ on the right and this operation is idempotent, 
\item\label{delta:4} $\Delta[A]\rcomp\Delta[B]$ coincides with the intersection of $\Delta[A]$ and $\Delta[B]$, \ie 
$\Delta[A]\rcomp\Delta[B] = \Delta[A]\cap\Delta[B] = \Delta[A\cap B]$. 
\end{enumerate}
Note that, assuming \cref{delta:1}, \cref{delta:3} is equivalent to the property $\Delta[A]\subseteq\Delta[A]\rcomp\Delta[A]$, which is the dual of transitivity (also known as density or interpolaiton property). 
\cref{delta:3,delta:4} highlight the fact mentioned in \cite{185537} that relational composition is the relational analogue of  conjunction. 
Finally, note that in this case \cref{delta:1} actually implies all the others, but, as we will see, this is due to a special property that is not available in  more general settings. 

In order to work in more generality, let us now recall the algebraic abstraction of the poset $\PP(U\times U)$. 
An \emph{ordered involutive monoid} 
$M = \ple{\Car{M}, \order, \fmul, \fone, \finv}$ 
consists of an ordered monoid $\ple{\Car{M}, \order, \fmul, \fone}$ together with a monotone function \fun{\finv}{\Car{M}}{\Car{M}} such that
$\finv(\fone) = \fone$, $\finv(\relr\fmul\rels) = \finv(\rels)\fmul\finv(\relr)$ and $\finv(\finv(\relr)) = \relr$. 
A \emph{boolean involutive monoid}
is an ordered involutive monoid $M$ where 
$\ple{\Car{M},\order}$ is a boolean algebra, 
$\fmul$ distributes over finite joins and 
$\finv$ preserves boolan operations. 
We report the following result as we have not found it in the literature. 

\begin{proposition}\label{prop:ra-modular}
Let $M$ be a boolean involutive monoid. Then, the following are equivalent: 
\begin{description}
\item[Tarski inequality \cite{Givant1}] $\finv(\relr)\fmul \neg(\relr\fmul\rels) \order \neg\rels$ 
\item[Modularity law \cite{FreydS90}] $(\relr\fmul\rels)\land\relt \order \relr\fmul(\rels \land (\finv(\relr)\fmul\relt))$ 
\end{description}
\end{proposition}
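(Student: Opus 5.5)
The plan is to prove both implications directly, by elementary Boolean-algebra reasoning. Only three facts about a boolean involutive monoid $M$ are needed: $\finv(\finv(\relr))=\relr$; $\fmul$ distributes over finite joins, hence is monotone in each argument and satisfies $\relr\fmul 0 = 0$ (distribution over the empty join); and, in a Boolean algebra, $x\order\neg y$ iff $x\land y = 0$. With the last fact, the Tarski inequality can be rewritten as $(\finv(\relr)\fmul\neg(\relr\fmul\rels))\land\rels = 0$ for all $\relr,\rels$. This is the form I will use in both directions.

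\emph{Modularity implies Tarski.} I instantiate the modularity law with $\finv(\relr)$ in place of $\relr$, $\neg(\relr\fmul\rels)$ in place of $\rels$, and $\rels$ in place of $\relt$. Using $\finv(\finv(\relr))=\relr$, this gives
\[
(\finv(\relr)\fmul\neg(\relr\fmul\rels))\land\rels \order \finv(\relr)\fmul(\neg(\relr\fmul\rels)\land(\relr\fmul\rels)) = \finv(\relr)\fmul 0 = 0 ,
\]
which is exactly the rewritten Tarski inequality.

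\emph{Tarski implies modularity.} Set $\relz = \finv(\relr)\fmul\relt$ and split $\rels$ along $\relz$:
\[
\rels = (\rels\land\relz)\lor(\rels\land\neg\relz).
\]
By distributivity of $\fmul$ over binary joins,
\[
\relr\fmul\rels = \relr\fmul(\rels\land\relz)\ \lor\ \relr\fmul(\rels\land\neg\relz).
\]
Meeting with $\relt$ and distributing $\land$ over $\lor$ in the Boolean algebra gives
\[
(\relr\fmul\rels)\land\relt \order \relr\fmul(\rels\land\relz)\ \lor\ \bigl(\relr\fmul(\rels\land\neg\relz)\land\relt\bigr).
\]
It then suffices to show that the second disjunct is $0$. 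By monotonicity of $\fmul$,
\[
\relr\fmul(\rels\land\neg\relz)\order\relr\fmul\neg(\finv(\relr)\fmul\relt).
\]
The Tarski inequality instantiated at $\finv(\relr)$ and $\relt$, together with $\finv(\finv(\relr))=\relr$, gives
\[
\relr\fmul\neg(\finv(\relr)\fmul\relt)\order\neg\relt.
\]
Combining the two, the second disjunct lies below $\neg\relt\land\relt=0$, and modularity follows.

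The main obstacle is the second direction, specifically choosing the splitting of $\rels$ along $\finv(\relr)\fmul\relt$. Once that split is made, the rest is routine. The key point is to apply the Tarski inequality to the converse $\finv(\relr)$ rather than to $\relr$, with involutivity turning it back into a statement about $\relr$. No use of $\finv$ preserving Boolean operations or of the monoid unit appears to be needed.
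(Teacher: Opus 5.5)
Your proof is correct and follows the paper's argument. For Tarski implies modularity you split $\rels$ along $\relz=\finv(\relr)\fmul\relt$ and apply the Tarski inequality at $\finv(\relr)$ and $\relt$. For the converse you instantiate the modular law with $\finv(\relr)$, $\neg(\relr\fmul\rels)$ and $\rels$.
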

\begin{proof}

Let $\relz=\finv(\relr)\fmul\relt$. By distributivity of joins over the relational operations, it follows that  
$(\relr\fmul \rels)\land\relt=
[(\relr\fmul(\rels\land\relz))\land\relt]\lor [(\relr\fmul (\rels\land\neg\relz))\land\relt]$. Now,  $(\relr\fmul(\rels\land \neg \relz))\land\relt \order
(\relr\fmul\neg \relz)\land\relt = (\finv(\finv(\relr))\fmul \neg(\finv(\relr)\fmul\relt)) \land \relt$. This, by Tarski inequality, is less than or equal to $\neg\relt\land\relt=\bot$, therefore $(\relr\fmul \rels)\land\relt = (\relr\fmul(\rels\land\relz))\land\relt
\order \relr\fmul(\rels\land\relz)
= \relr\fmul(\rels\land\finv(\relr)\fmul\relt)
$

Conversely, by the modular law, it holds that $[\finv(\relr)\fmul \neg(\relr\fmul \rels)]\land \rels\order \finv(\relr)\fmul [(\neg(\relr\fmul\rels))\land (\relr\fmul\rels)]=\finv(\relr)\fmul \bot=\bot$, where the last equality holds again by distributivity of finite joins over the relational operations.
\end{proof}

A \emph{relation algebra} \cite{Givant1} is a boolean involutive monoid satisfying one of the equivalent conditions in \cref{prop:ra-modular}. 
Clearly, $\PP(U\times U)$ with the relational operations described above  is a relation algebra. 

Abstracting the properties observed for the image of $\PP(U)$ into $\PP(U\times U)$, 
we can say that a predicate in an ordered involutive monoid $M$ is an element  $\preda\in\Car{M}$ such that 
$\preda\order\fone$, $\preda\order \finv(\preda)$ and $\preda\order\preda\fmul\preda$. 
By regarding the elements of an ordered involutive monoid as abstract relations, 
the predicates defined above are the dual of equivalence relaitons, \ie coequivalence relations. 
Indeed, by reversing the order, 
the three conditions above become $\fone \order\preda$, $\finv(\preda)\order \preda$ and $\preda\fmul\preda\order\preda$, 
which define an equivalence relation in $M$. 
We write $\Pred{M}$ for the subset of $\Car{M}$ consisting of all predicates in $M$. 
It is immediate to see that 
$\fone$   is the top element of $\Pred{M}$ and 
$\finv$ becomes the identity on $\Pred{M}$. 
Moreover, for all $\preda,\predb\in\Pred{M}$ and $\relt\in\Car{M}$, the function defined by 
$\relt\mapsto\preda\fmul\relt\fmul\predb$ is idempotent. 
Therefore, elements in $\Pred{M}$ satisfy (a generalisation of) \cref{delta:1,delta:2,delta:3}. 
Finally, for all $\preda,\predb,\predc\in\Pred{M}$, we have 
$\predc\order\preda\fmul\predb$ if and only if $\predc\order\preda$ and $\predc\order\predb$. 
This seems to say that $\fmul$ behaves as a meet on $\Pred{M}$. However, this is not the case as the following proposition shows. 

\begin{proposition} \label{prop:iom-pred-comm} 
Let $M$ be an ordered involutive monoid. 
For all $\preda,\predb\in\Pred{M}$, we have 
$\preda\fmul\predb\in\Pred{M}$ if and  only if $\preda\fmul\predb = \predb\fmul\preda$. 
When this holds, $\ple{\Pred{M},\fmul,\fone}$ is a meet-semilaticce. 

\end{proposition}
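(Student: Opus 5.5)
Let $\preda,\predb\in\Pred{M}$. First I record the basic facts about predicates that the argument needs.

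\begin{itemize}
\item Since $\preda\order\fone$ and $\finv$ is monotone with $\finv(\fone)=\fone$, we get $\finv(\preda)\order\fone$.
\item Applying $\finv$ to $\preda\order\finv(\preda)$ gives $\finv(\preda)\order\preda$, so $\finv(\preda)=\preda$.
\item From $\preda\order\preda\fmul\preda\order\preda\fmul\fone=\preda$ we get $\preda\fmul\preda=\preda$.
\item Monotonicity of $\fmul$ together with $\preda,\predb\order\fone$ gives $\preda\fmul\predb\order\preda$ and $\preda\fmul\predb\order\predb$.
\end{itemize}

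\textbf{Forward direction.} Suppose $\preda\fmul\predb\in\Pred{M}$. Then it is symmetric, so
\[\preda\fmul\predb=\finv(\preda\fmul\predb)=\finv(\predb)\fmul\finv(\preda)=\predb\fmul\preda.\]

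\textbf{Converse direction.} Assume $\preda\fmul\predb=\predb\fmul\preda$ and put $\relt=\preda\fmul\predb$. We check the three conditions.
\begin{itemize}
\item Coreflexivity: $\relt\order\fone\fmul\fone=\fone$.
\item Symmetry: $\finv(\relt)=\predb\fmul\preda=\relt$.
\item Density: by idempotence and commutativity,
\[\relt\fmul\relt=\preda\fmul(\predb\fmul\preda)\fmul\predb=\preda\fmul\preda\fmul\predb\fmul\predb=\preda\fmul\predb=\relt.\]
\end{itemize}
Hence $\relt\in\Pred{M}$. The only mildly delicate step here is remembering that idempotence of each predicate is needed for density, and it comes from combining $\preda\order\preda\fmul\preda$ with coreflexivity.

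\textbf{Meet-semilattice claim.} Assume the condition holds for all pairs, so that $\fmul$ restricts to $\Pred{M}$, with $\fone\in\Pred{M}$ as top element. I show $\preda\fmul\predb$ is the meet of $\preda$ and $\predb$ in $\Pred{M}$.
\begin{itemize}
\item It is a lower bound, by the last of the basic facts.
\item It is the greatest one: if $\predc\in\Pred{M}$ with $\predc\order\preda$ and $\predc\order\predb$, then $\predc\order\predc\fmul\predc\order\preda\fmul\predb$, using density of $\predc$ and monotonicity of $\fmul$.
\end{itemize}
Therefore $\ple{\Pred{M},\fmul,\fone}$ is a meet-semilattice with $\fmul$ as binary meet and $\fone$ as top. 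Equivalently, it is a commutative idempotent monoid whose induced order coincides with $\order$.

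The main obstacle is interpreting ``when this holds'' correctly: the semilattice conclusion requires commutativity for every pair of predicates, not just one. Under that reading each step above is routine.
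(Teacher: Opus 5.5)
Your proof is correct and follows the paper's argument. The forward direction is the same involution computation. For the converse you check coreflexivity, symmetry and cotransitivity using commutativity, as the paper does; you use idempotence $\preda\fmul\preda=\preda$ where the paper only uses $\preda\order\preda\fmul\preda$, which is an inessential difference. Your explicit check of the meet-semilattice claim, read as closure of $\Pred{M}$ under $\fmul$ for all pairs, fills in what the paper calls straightforward.
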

\begin{proof}
If $\preda\fmul\predb\in \Pred{M}$, we have
$ \preda\fmul\predb 
  = \finv(\preda\fmul\predb) 
  = \finv(\predb)\fmul\finv(\preda) 
  = \predb\fmul\preda$, as needed. 
If $\preda\fmul\predb = \predb\fmul\preda$, we have 
$ \preda\fmul\predb 
  =\predb\fmul\preda
  = \finv(\predb)\fmul\finv(\preda) 
  = \finv(\preda\fmul\predb)$ and 
$ \preda\fmul\predb 
  \order \preda\fmul\preda\fmul\predb\fmul\predb 
  = \preda\fmul\predb\fmul\preda\fmul\predb$, as needed. 
Then, the fact that $\ple{\Pred{M},\fmul,\fone}$ is a meet-semilattice is straightforward. 
\end{proof}

Hence, in general, $\Pred{M}$ is not closed under composition, but, when this happens, it becomes  a meet-semilattice. This happens, for instance, in the degenerate case where $M$ is commutative. 
Another case where this fact holds are relation algebras. 
By \cref{prop:ra-modular}, relation algebras are a special case of one-object allegories \cite{FreydS90}, that we will call \emph{modular involutive monoids}. 
These are ordered involutive monoids with a meet-semilatice structure and where the modularity law holds. 
For these structures we can prove the following 

\begin{proposition}\label{prop:mim}
If $M$ is a modular involutive monoid, then $\Pred{M} = \{ \relr\in\Car{M} \mid \relr\order\fone \}$ and 
$\ple{\Pred{M},\fmul,\fone}$ is a meet-semilattice.
\end{proposition}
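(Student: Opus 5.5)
We will prove the equality of sets by double inclusion and then derive the meet-semilattice structure from \cref{prop:iom-pred-comm}. One inclusion is immediate, since every predicate satisfies $\preda\order\fone$ by definition. For the converse, fix $\relr\order\fone$. We must show $\relr\order\finv(\relr)$ and $\relr\order\relr\fmul\relr$. Throughout we will use the modularity law
$(\relr\fmul\rels)\land\relt \order \relr\fmul(\rels\land(\finv(\relr)\fmul\relt))$
with well-chosen instances, together with monotonicity of $\fmul$ and $\finv$.

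For symmetry, instantiate modularity with $\rels=\fone$ and $\relt=\relr$:
\[
\relr = (\relr\fmul\fone)\land\relr \order \relr\fmul(\fone\land(\finv(\relr)\fmul\relr)) \order \relr\fmul\finv(\relr)\fmul\relr .
\]
Because $\relr\order\fone$, we also have $\finv(\relr)\order\finv(\fone)=\fone$. Bounding the two outer factors by $\fone$ then gives $\relr\order\finv(\relr)$. Applying $\finv$ to this inequality yields $\finv(\relr)\order\relr$, so in fact $\relr=\finv(\relr)$.

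For density, start from the same chain $\relr\order\relr\fmul\finv(\relr)\fmul\relr=\relr\fmul\relr\fmul\relr$. Bounding one factor by $\fone$ gives $\relr\order\relr\fmul\relr$. Hence $\relr\in\Pred{M}$.

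For the meet-semilattice structure, \cref{prop:iom-pred-comm} reduces the claim to showing $\preda\fmul\predb=\predb\fmul\preda$ for $\preda,\predb\order\fone$. The plan is to prove $\preda\fmul\predb=\preda\land\predb$, which is symmetric in $\preda$ and $\predb$. One direction is easy: $\preda\fmul\predb\order\preda\fmul\fone=\preda$, and similarly $\preda\fmul\predb\order\predb$. For the other direction, use modularity with $\rels=\fone$ and $\relt=\predb$:
\[
\preda\land\predb=(\preda\fmul\fone)\land\predb\order\preda\fmul(\fone\land(\finv(\preda)\fmul\predb))\order\preda\fmul\finv(\preda)\fmul\predb=\preda\fmul\preda\fmul\predb\order\preda\fmul\predb .
\]
Here the equality uses $\finv(\preda)=\preda$ from the previous step, and the last inequality bounds one copy of $\preda$ by $\fone$. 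Commutativity follows, and \cref{prop:iom-pred-comm} then gives that $\ple{\Pred{M},\fmul,\fone}$ is a meet-semilattice whose meet coincides with $\land$.

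The only delicate point is choosing the right instances of modularity, in particular $\rels=\fone$, so that the inequalities go in the direction we need. Everything else is bookkeeping with monotonicity and $\fone$ as an upper bound.
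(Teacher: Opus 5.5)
Your proof is correct and follows essentially the paper's route. An instance of the modularity law with $\rels=\fone$ gives $\relr\order\finv(\relr)$, and then $\relr\order\relr\fmul\relr$, for coreflexive $\relr$; commutativity of $\fmul$ on coreflexives plus \cref{prop:iom-pred-comm} then gives the meet-semilattice. The only difference is minor: the paper takes $\relt=\fone$ and gets commutativity by rerunning the symmetry argument on the coreflexive $\preda\fmul\predb$ (so $\preda\fmul\predb\order\finv(\preda\fmul\predb)=\predb\fmul\preda$), whereas you take $\relt=\relr$ resp.\ $\relt=\predb$ and prove $\preda\fmul\predb=\preda\land\predb$ directly, which has the small bonus of giving the identity later used in \cref{prop:pred-modular}.
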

\begin{proof}
It suffices to show that $\relr\order\fone$ implies $\relr\order\finv(\relr)$ and $\relr\order\relr\fmul\relr$. 
We have 
$ \relr 
  \order \relr\land\fone 
  \order \relr\fmul(\fone\land\finv(\relr)) 
  = \relr\fmul\finv(\relr)$, which implies 
$\relr\order\finv(\relr)$ and then 
$\relr\order\relr\fmul\relr$. 

To prove that $\ple{\Pred{M},\fmul,\fone}$ is a meet-semilattice, consider $\preda,\predb\in\Pred{M}$ and observe that 
$ \preda\fmul\predb 
  \order (\preda\fmul\predb)\land\fone 
  \order \preda\fmul\predb\fmul(\fone\land\finv(\preda\fmul\predb)) 
  \order \finv(\preda\fmul\predb) 
  = \finv(\predb)\fmul\finv(\preda) 
  = \predb\fmul\preda$,  and so 
by \cref{prop:iom-pred-comm} we get the thesis. 
\end{proof}
 
Indeed, in (one-object) allegories one usually defines predicates as coreflexive relations \cite{FreydS90}. 
However, the fact that all the other desiderata hold is only due to the presence of the modularity law and so,
 they no longer need to hold if we drop it. 
Therefore, this definition does not scale to more general contexts, while our definition via coequivalence relations does, and generalises  the one for allegories. 
In the rest of the paper we will develop this idea in the more general setting of relational doctrines of which ordered involutive monoids are a special case.

%% file: prelim.tex
\section{Preliminaries on relational doctrines}
\label{sect:prelim}

In \cref{sect:intro2} we mentioned relational doctrines as contravariant functors on the product of a category with itself that provide an algebraic tool to model the calculus of binary relations. We recall the definition from \cite{DagninoP25apal}.

\begin{definition}\label[def]{def:rel-doc}
A \emph{relational doctrine} consists of a category \CC and a functor \fun{\RDoc}{\bop\CC}{\Pos} such that for every triple of objects $X,Y,Z$ in \CC there is a monotone function \[\fun{\blank\rcomp\blank}{\RDoc(X,Y)\times\RDoc(Y,Z)}{\RDoc(X,Z)}\] and an element $\rid_X \in \RDoc(X,X)$ such that 

\begin{align*} 
\relr\rcomp(\rels\rcomp\relt) &= (\relr\rcomp\rels)\rcomp\relt 
& 
\rid_X\rcomp\relr &= \relr 
&
\relr\rcomp\rid_Y &= \relr 
\\
(\relr\rcomp\rels)\rconv &= \rels\rconv \rcomp \relr\rconv 
&
\rid_X\rconv &= \rid_X
&
\relr\rdconv &= \relr 
\end{align*} 
and such that for all $\relr\in\RDoc(X,Y)$, $\rels\in\RDoc(Y,Z)$ and 
\fun{f}{A}{X}, \fun{g}{B}{Y} and \fun{h}{C}{Z}  in \CC it holds that
\[\RDoc\reidx{f,g}(\relr)\rcomp\RDoc\reidx{g,h}(\rels) \order \RDoc\reidx{f,h}(\relr\rcomp\rels)
\qquad
\rid_X \order \RDoc\reidx{f,f}(\rid_Y)
\qquad
(\RDoc\reidx{f,g}(\relr))\rconv \order \RDoc\reidx{g,f}(\relr\rconv)\] 
\end{definition}

The element $\rid_X$ is the \emph{identity} or \emph{diagonal} relation on $X$. 
The relation 
$\relr\rcomp\rels$  is the \emph{relational composition} of $\relr$ followed by $\rels$ and   
$\relr\rconv$ is the \emph{converse} of $\relr$.  
Note that all relational operations are lax natural transformations, but 
the operation of taking the converse, being  an involution, is actually strictly natural, \ie $(\RDoc\reidx{f,g}(\relr))\rconv = \RDoc\reidx{g,f}(\relr\rconv)$. 

Given the connection between indexed posets and faithful fibrations via the Grothendieck construction, 
as it is  customary, we adopt some terminology coming from the latter ones: 
for a relational doctrine $\fun{\RDoc}{\bop\CC}{\Pos}$, we refer to $\CC$ as the \emph{base category} (or simply the base) and 
say that $\RDoc$ is a relational doctrine over, or based on, $\CC$, 
the posets of the form $\RDoc(X,Y)$ are called \emph{fibers} over $(X,Y)$, and 
the monotone functions $\RDoc\reidx{f,g} : \RDoc(X',Y') \to \RDoc(X,Y)$ are dubbed \emph{reindexing} along $f:X\to X'$ and $g:Y\to Y'$ in $\CC$.

In a relational doctrine $\fun{\RDoc}{\bop\CC}{\Pos}$, 
every arrow \fun{f}{X}{Y} in \CC induces a relation $\gr{f}=\RDoc\reidx{f,\id_Y}(\rid_Y)\in\RDoc(X,Y)$ 
which is called $\RDoc$-\emph{graph} of $f$, or simply \emph{graph} when the relational doctrines $\RDoc$ is clear from the context. 
Graphs commutes with composition and identities in the sense that $\gr{\id_X} = \rid_X$ and 
$\gr{g\circ f} = \gr{f}\rcomp\gr{g}$. 

An immediate use of graphs is in writing reindexing \fun{\RDoc\reidx{f,g}}{\RDoc(X,Y)}{\RDoc(A,B)} in relational terms and, moreover, in showing that each $\RDoc\reidx{f,g}$ has a left adjoint \fun{\Ex\reidx{f,g}}{\RDoc(A,B)}{\RDoc(X,Y)}. Indeed 
\begin{equation*}\label{left-adj}
\RDoc\reidx{f,g}(\relr) = \gr{f} \rcomp \relr \rcomp \gr{g}\rconv 
\qquad 
\Ex^\RDoc\reidx{f,g}(\rels) = \gr{f}\rconv \rcomp \rels \rcomp \gr{g} 
\end{equation*}
In writing the left adjoints we will omit the reference to $\RDoc$ when this is clear from the context.

Some examples of relational doctrines can be found in \cite{DagninoP25apal,DagninoP23,DagninoP25tac}, 
we recall here  only those that we will use to exemplify the definitions introduced in this paper.

\begin{example}\label[ex]{ex:rel-doc:alg} 
Any  involutive ordered monoid $M=\ple{\Car{M}, \order, \fmul, \fone, \finv}$  can be seen as a relational doctrine based on $\ct{1}$ (the category with only one object and one arrow). It suffices to set 
\[\rid_X = \fone
\qquad 
\relr\rcomp\rels = \relr\fmul\rels
\qquad
\relr\rconv = \finv(\relr) 
\]
In particular, this shows that every relation algebra is a relational doctrine based on $\ct{1}$. 
\end{example}

\begin{example}\label[ex]{ex:rel-doc:vrel} 
Let $\Qtl = \ple{\Car\Qtl,\qord,\qmul,\qone, \qinv}$ be an involutive quantale. 
A \emph{$\Qtl$-relation}  \cite{HofmannST14} between sets $X$ and $Y$ is a function \fun{\alpha}{X\times Y}{\Car\Qtl}. 
The functor \fun{\VRel\Qtl}{\bop\Set}{\Pos} sends a pair of sets
$(X,Y)$ to $\Car\Qtl^{X\times Y}$, the set of $\Qtl$-relations from $X$ to $Y$  ordered pointwise, and a pair of functions \fun{(f,g)}{(X',Y')}{(X,Y)} to the monotone map $\VRel\Qtl\reidx{f,g}$ that evaluates $\fun{\alpha}{X\times Y}{\Car\Qtl}$ to the composition $\fun{\relr(f\times g)}{X'\times Y'}{\Car\Qtl}$. The functor $\VRel\Qtl$ is a relational doctrine, in which
\begin{align*}
\rid_X(x,x') &= \begin{cases}
\qone & x = x' \\
\bot  & x \ne x' 
\end{cases}
\\ 
(\relr\rcomp\rels)(x,z) &= \qsup_{y\in Y} (\relr(x,y)\qmul\rels(y,z))
\\
\relr\rconv(y,x) &= \qinv(\relr(x,y)) 
\end{align*}
for $\relr\in\VRel\Qtl(X,Y)$ and $\rels\in\VRel\Qtl(Y,Z)$. 

Note that every commutative quantale is involutive, taking the identity as the involution. 
Hence, every commutative quantale $\Qtl$ has an associated doctrine of $\Qtl$ relations where the converse is given by 
$\relr\rconv(y,x) = \relr(x,y)$. 
Among other examples, we recall here the relational doctrine of metric relations, which is determined by the commutative quantale
$\RPos = \ple{[0,\infty],\geq,+,0}$ of extended non-negative real numbers, where $+$ is the standard addition extended to $\infty$ by setting 
$\infty + r = r + \infty = \infty$, for all $r \in [0,\infty]$. 

\end{example}

\begin{example}\label[ex]{ex:rel-doc:eed} 
Existential elementary doctrines  \cite{MaiettiME:eleqc}  are functors  $\fun{\PDoc}{\CC\op}{\Pos}$ where $\CC$ has finite products, $\PDoc$ factors through the category of meet-semilattices and homomorphisms of meet-semilattices and for every arrow $f$ in $\CC$ the map $\PDoc(f)$ has a left adjoint satisfying certain distributivity conditions called Frobenius Reciprocity and Beck-Chevalley condition (see \cite{MaiettiME:eleqc} for details). 
Many examples of existential elementary doctrines can be found in \cite{MaiettiME:eleqc,JacobsB:catltt,PittsCL}. 
Existential elementary doctrines model the $(\exists, =,\wedge,\top)$-fragment of predicate logic, also called the regular fragment. One can associate to each elementary existential doctrine a signature having a sort for each object of the base, a function symbol for each morphism, and a relation symbol for each element in the fibres (with obvious arity). The terms and the regular formulas over this signature form the internal language of the doctrine (details are in \cite{PittsCL, JacobsB:catltt}).
We employ this language to sketch how  
any existential elementary doctrine $\PDoc$ on $\CC$ determines a relational doctrine $\fun{\PtoR\PDoc}{\bop\CC}{\Pos}$. The assignments are  $\PtoR\PDoc(X,Y)=P(X\times Y)$ and $\PtoR\PDoc\reidx{f,g}=\PDoc(f\times g)$  
\begin{align*}
(\relr\rcomp\rels)(x,z) &= \exists_{y:Y}(\relr(x,y)\wedge \rels(y,z))
\\
\rid_X(x,x') &= x=_Xx'
\\ 
\relr\rconv(y,x) &= \relr(x,y)
\end{align*}

where $=_X$ is the equality predicate over $X$.

\end{example}

\begin{example}\label[ex]{ex:rel-doc:span} 
Let \CC be a category with weak pullbacks. 
Denote by $\Span\CC(X,Y)$ the poset reflection of the preorder whose objects are spans in \CC between $X$ and $Y$ and where 
$\spn{X}{p_1}{A}{p_2}{Y} \order \spn{X}{q_1}{B}{q_2}{Y}$ if and only if there is an arrow \fun{f}{A}{B} such that 
$p_1 = q_1 \circ f $ and $p_2 = q_2 \circ f$. 
There is a functor  \fun{\Span\CC}{\bop\CC}{\Pos} whose action on arrows is given taking appropriate weak pullbacks. The functor $\Span\CC$ is a relational doctrine where the relational composition  of  
$\relr = \spn{X}{p_1}{A}{p_2}{Y}$ and 
$\rels = \spn{Y}{q_1}{B}{q_2}{Z}$ is obtained considering a weak pullback $p_2$ and $q_1$, while $\rid_X = \spn{X}{\id_X}{X}{\id_X}{X}$ and 
$\relr\rconv=  \spn{Y}{p_2}{A}{p_1}{X}$.  

When \CC is a regular category, for every pair of objects $X,Y$ in $\CC$, 
we can consider the  subposet $\jmSpan\CC(X,Y)$ of $\Span\CC(X,Y)$ on jointly monic spans. 
Then  \fun{\jmSpan\CC}{\bop\CC}{\Pos} is a relational doctrine, where the identity relations and the converses are defined as in \cref{ex:rel-doc:span}, while to compute the relational composition of two jointly monic spans $\relr$ and $\rels$ it suffices to observe that any spans factors through a jointly monic one.
\end{example}

\begin{example}\label[ex]{ex:rel-doc:cob} 
Given a relational doctrine $\fun{\RDoc}{\bop\CC}{\Pos}$, any functor $\fun{F}{\ct{D}}{\ct{C}}$ determines, by  compositing $\RDoc$ with $\fun{\bop{F}}{\bop{\ct{D}}}{\bop\CC}$, a new relational doctrine $\fun{F^\star\RDoc}{\bop{\ct{D}}}{\Pos}$ called \emph{change of base} of $\RDoc$ along $F$.
\end{example}

\begin{remark}\label[rem]{rem:rdoc-base-prod} 
Let us notice that doctrines 
modelling the calculus of predicates, like those presented in \cref{ex:rel-doc:eed}, require a base category with finite products, 
while this is not the case for relational doctrines. 
This difference can be explained by the fact that the calculus of predicates makes an extensive use of variables, while the calculus of relaitons is variable-free, and 
products are essential for dealing with variables, which are represented by projections. 
\end{remark}

 In line with the analogy between doctrines and fibration, we say that a relational doctrine has \emph{fibered finite meets} if every fibre has finite meets and reindexing preserves them.

All the relational doctrines presented in the previous examples have fibered meets except for the relational doctrines of spans $\Span\CC$, 
which have fibered meets provided that $\CC$ has weak finite limits (and not only weak pullbacks). 
Indeed to compute the meet of the relations  represented by $\relr=\spn{X}{p_1}{A}{p_2}{Y}$ and $\rels=\spn{X}{q_1}{B}{q_2}{Y}$ 
one takes a weak limit of the diagram that consists of the two spans
\[
\xymatrix@R=1.5ex@C=1.8ex{
&&L\ar[dl]_-{k}\ar[rd]^-{h}&&\\
&A\ar[dl]_-{p_1}\ar[drrr]&&B\ar[rd]^-{q_2}\ar[llld]&\\
X&&&&Y
}
\]
 The meet  $\relr\land\rels$ is represented by $\spn{X}{p_1k}{L}{q_2h}{Y}$, while the top element is represented by any weak product diagram $\spn{X}{\pi_1}{W}{\pi_2}{Y}$.

Relational doctrines with fibered meets are the common ground for the introduction of next two classes of relational doctrines: the cartesian and the modular ones. These doctrines will be an important reference in \cref{sect:predicates}  for the introduction of the notion of predicate in the relational setting. 

In the next definition, for readability sake, we write $\fpr[X,Y]$ for the first projection $\fun{\pi_1}{X\times Y}{X}$. Similarly, $\spr[X,Y]$ denotes the second projection.

 \begin{definition}\label[def]{def:cartesian}
A relational doctrine \fun\RDoc{\bop\CC}{\Pos} is \emph{cartesian} if it has fibered meet and the following properties hold:
\begin{enumerate} 
\item\label{def:cartesian:1}  \CC has finite products
\item\label{def:cartesian:3}  for all objects $X,Y$ in \CC 
\[ \rid_1 = \top_1 \qquad \rid_{X\times Y} = (\gr{\fpr[X,Y]}\rcomp\gr{\fpr[X,Y]}\rconv)\land(\gr{\spr[X,Y]}\rcomp\gr{\spr[X,Y]}\rconv) \] 
\item\label{def:cartesian:4}  for all $\relr\in\RDoc(A,X)$, $\rels\in\RDoc(X,B)$, $\relr'\in\RDoc(A,Y)$ and $\rels'\in\RDoc(Y,B)$ 
\[ (\relr\rcomp\rels)\land(\relr'\rcomp\rels') = ((\relr\rcomp\gr{\fpr[X,Y]}\rconv)\land(\relr'\rcomp\gr{\spr[X,Y]}\rconv)) \rcomp ((\gr{\fpr[X,Y]}\rcomp\rels)\land(\gr{\spr[X,Y]}\rcomp\rels')) \] 
\item\label{def:cartesian:5}  for all relations $\relr,\rels\in\RDoc(X,Y)$ 
\[ (\relr\land_{X,Y}\rels)\rconv = \relr\rconv\land_{Y,X}\rels\rconv  \qquad \top_{X,Y}\rconv = \top_{Y,X} \] 
\end{enumerate} 
\end{definition}
 
 \cref{def:cartesian:1} of \cref{def:cartesian} asks that the relational doctrine $\RDoc$ is in particular a primary doctrine in the sense of \cite{MaiettiME:eleqc}.
 Conditions from \cref{def:cartesian:3} to \cref{def:cartesian:5} ask that meets are well behaved with respect to the relational operations. Note that, from a logical point of view, the second condition in \cref{def:cartesian:3} corresponds to  the known fact that the equality predicate over the concatenation of two contexts is the conjunction of the equality predicates over each context.

\begin{example} \label[ex]{ex:rel-doc-car} 
The doctrine \fun{\VRel\Qtl}{\bop\Set}{\Pos} of $\Qtl$-relation 
is not cartesian in general. 
Consider the commutative quantale $\ple{[0,\infty], \leq, \cdot, 1}$ where $\cdot$ is the usual multiplication of non-negative real numbers  extended to $\infty$ by setting 
Then, \cref{def:cartesian:3} does not hold because $\rid_1$ is the constant function equal to $1$, while $\top_1$ is the constant function equal to $\infty$. 

 The relational doctrine $\Span\CC$ is cartesian provided that $\CC$ has weak finite limits and  strong binary products. For a regular category $\CC$ the doctrines of the form  $\jmSpan\CC$ are always cartesian. Also the relational doctrines of the form $\PtoR{\PDoc}$ where $\PDoc$ is an existential elementary doctrines are cartesian.
\end{example}

\begin{definition}\label[def]{def:modular}
A relational doctrine $\RDoc$ is \emph{modular} if it has fibered meets and   
for all relations $\relr\in\RDoc(A,X)$, $\rels\in\RDoc(A,Y)$ and $\relt\in\RDoc(X,Y)$ it holds 
\[ \relr\rcomp\relt \land \rels \order (\relr \land \rels\rcomp\relt\rconv)\rcomp\relt \] 
\end{definition}

The terminology in \cref{def:modular} is taken from \cite{FreydS90}.

\begin{example} \label[ex]{ex:rel-doc-mod} 
If $\CC$ has weak finite limits, the doctrine $\Span\CC$ is modular, so is the doctrine $\jmSpan\CC$ of jointly monic spans over a regular category $\CC$. For every existential elementary doctrines $\PDoc$ the relational doctrine $\PtoR{\PDoc}$ is modular, so is $\jmSpan\CC_\ct{M}$. 

The relational doctrine \fun{\VRel\Qtl}{\bop\Set}{\Pos} of $\Qtl$-relation (see \cref{ex:rel-doc:vrel}) is not modular in general. 
Consider for instance the relational doctrine of metric relations, obtained by taking $\Qtl = \RPos$. 
Recall that the order in $\RPos$ is reversed, hence 
$0$ is the top element and 
$r_1\land r_2 = \max\{r_1,r_2\}$. 
Fix a positive real number $r$ and let $\relr=\rels=\relt$ be constant functions equal to $r$. 
Then, we have 
$(\relr\rcomp\relt \land \rels) (a,y) = 2r \land r = 2r$ and  
$((\relr \land \rels\rcomp\relt\rconv)\rcomp\relt) (a,y) = (r\land 2r) + r = 3r$ and, clearly, 
$2r\not\geq3r$. 
\end{example}

Relational doctrines are the objects of the 2-category \RDtn. A 1-arrow \oneAr{F}{\RDoc}{\SDoc} from \fun{\RDoc}{\bop\CC}{\Pos} to \fun{\SDoc}{\bop\D}{\Pos}, 
is a pair \ple{\fn{F},\lift{F}} consisting of 
a functor \fun{\fn{F}}{\CC}{\D} and a natural transformation 
\nt{\lift{F}}{\RDoc}{\SDoc\circ \bop{\fn{F}}},

\[
\xymatrix@C=7.5em@R=1em{
{\bop\CC}\ar[rd]^(.4){\RDoc}_(.4){}="P"
\ar[dd]_{\bop{\fn{F}}}^{}="F"
&\\
 & {\ct{Pos}}\\
{\bop\D}\ar[ru]_(.4){\SDoc}^(.4){}="R"&
\ar"P";"R"_{\lift{F}\kern.5ex\cdot\kern-.5ex}="b"
}
\]
preserving relational identities, composition and converse, that is 
\begin{align*}
\rid_{\fn{F}X} &= \lift{F}_{X,X}(\rid_X)\\ 
\lift{F}_{X,Y}(\relr)\rcomp \lift{F}_{Y,Z}(\rels) &= \lift{F}_{X,Z}(\relr\rcomp\rels)\\ 
\lift{F}_{X,Y}(\relr))\rconv &= \lift{F}_{Y,X}(\relr\rconv)
\end{align*}
for $\relr\in\RDoc(X,Y)$ and $\rels\in\RDoc(Y,Z)$. 

A 2-arrow \twoAr{\theta}{F}{G} is a natural transformation \nt{\theta}{\fn{F}}{\fn{G}} such that 
$\lift{F}_{X,Y} \order \SDoc\reidx{\theta_X,\theta_Y}\circ \lift{G}_{X,Y}$, for all objects $X,Y$ in the base of $\RDoc$
\[
\xymatrix@C=18em@R=1.5em{
{\bop\CC}\ar[rd]^(.4){\RDoc}_(.4){}="P"
\ar@<-1ex>@/_/[dd]_{\bop{\fn{F}}}^{}="F"\ar@<1ex>@/^/[dd]^{\bop{\fn{F'}}}_{}="G"&\\
 & {\ct{Pos}}\\
{\bop\D}\ar[ru]_(.4){\SDoc}^(.4){}="R"&
\ar@/_/"P";"R"_{\lift{F}\kern.5ex\cdot\kern-.5ex}="b"
\ar@<1ex>@/^/"P";"R"^{\kern-.5ex\cdot\kern.5ex \lift{F'}}="c"
\ar"G";"F"_{.}^{\theta\op}\ar@{}"b";"c"|{\le}}
\]

Let us notice that cartesian relational doctrines are precisely the cartesian objects in $\RDtn$. 
Indeed, as observed in
\cite{DagninoP25apal},
the 2-category \RDtn has finite products and a relational doctrine $\RDoc$ is cartesian if and only if 
the 1-arrows $\TerAr_\RDoc$ and $\DiagAr_\RDoc$ have right adjoints in \RDtn.

Similarly, also existential elementary doctrines can be organised into a 2-category \EED where the 1-arrows are the pairs of a product preserving functors between the bases and a natural transformation preserving the relevant structures (see \cite{MaiettiME:quofcm}). 
The assignment presented in \cref{ex:rel-doc:eed} that sends an existential elementary doctrine  $\PDoc$ to the relational doctrine $\PtoR{\PDoc}$ determines a (non-full) inclusion of $\EED$ into $\RDtn$. 
So one can say that a relational doctrine is an existential elementary doctrine if it is in the essential image of that inclusion. The following proposition is proved in \cite{DagninoP25apal}.

\begin{proposition}\label[prop]{prop:eed-iff} 
A relational doctrine $\fun{\RDoc}{\bop\CC}{\Pos}$ is an existential elementary doctrine if and only if it is cartesian and modular.
\end{proposition}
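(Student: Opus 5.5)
The proof has two directions. For the easy one, assume $\RDoc\cong\PtoR\PDoc$ for an existential elementary doctrine $\PDoc$ on $\CC$. Then cartesianness and modularity are already recorded in \cref{ex:rel-doc-car,ex:rel-doc-mod}. Both properties are stable under isomorphism in $\RDtn$, since they are stated purely in terms of the relational operations, the meets and the products of the base. To verify them directly, one works in the internal regular language of $\PDoc$. Modularity is Frobenius reciprocity: from $\exists y.(\relr(a,x)\wedge\relt(x,y))\wedge\rels(a,y)$ we derive $\exists x.(\relr(a,x)\wedge\exists y'.(\rels(a,y')\wedge\relt(x,y'))\wedge\relt(x,y))$. \cref{def:cartesian:3} of \cref{def:cartesian} says that equality on $X\times Y$ is the conjunction of componentwise equalities. \cref{def:cartesian:4} is the rearrangement of existentials over a product $X\times Y$.

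For the converse, let $\RDoc$ be cartesian and modular over $\CC$. We define $\PDoc(X)=\RDoc(X,1)$ with $\PDoc(f)=\RDoc\reidx{f,\id_1}$, which is a functor $\CC\op\to\Pos$ with fibered finite meets.
\begin{enumerate}
\item Set the existential along $f\colon X\to Y$ to $\Ex\reidx{f,\id_1}(\preda)=\gr{f}\rconv\rcomp\preda$. Its adjointness to reindexing is the left adjoint already described.
\item Obtain Frobenius reciprocity and Beck--Chevalley from modularity. Frobenius should follow by instantiating the modular law with $\relt=\gr{f}$, together with the fact that graphs are maps. The latter means $\rid\order\gr{f}\rcomp\gr{f}\rconv$ and $\gr{f}\rconv\rcomp\gr{f}\order\rid$, from the lax naturality axioms.
\item Obtain Beck--Chevalley along product projections using \cref{def:cartesian:4}.
\item Define the equality predicate on $X$ as $\Ex\reidx{\Delta,\id_1}(\top)$, that is, $\gr{\Delta}\rconv\rcomp\top_{X,1}$.
\item Construct an isomorphism $\RDoc(X,Y)\cong\RDoc(X\times Y,1)$: send $\relr\mapsto \gr{\fpr}\rcomp\relr\rcomp\gr{\spr}\rconv\rcomp\top$, suitably written, with inverse $\preda\mapsto(\gr{\fpr}\rconv\land\ldots)$ built from \cref{def:cartesian:3}. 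This step uses $\rid_1=\top_1$ and the description of $\rid_{X\times Y}$.
\item Check that this isomorphism is natural in $(X,Y)$ and carries composition, identity and converse of $\RDoc$ to the regular-logic formulas of \cref{ex:rel-doc:eed}.
\end{enumerate}
Here \cref{def:cartesian:5} handles converse and meets, and \cref{def:cartesian:4} turns composition into the existential over the middle object.

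The main obstacle is step 5: showing that the two maps between $\RDoc(X,Y)$ and $\RDoc(X\times Y,1)$ are mutually inverse. This requires showing that every relation is ``tabulated'' by its unary counterpart on the product. I expect it to need modularity, to recover $\relr$ from $(\gr{\fpr}\rconv\rcomp\ldots)\land\ldots$, and the product identity law, in an interlocking computation. That computation, together with the verification that the equality predicates satisfy the elementary-doctrine axioms, is where most of the work lies.
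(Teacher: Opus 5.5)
The paper gives no proof of this statement; it quotes it from \cite{DagninoP25apal}. So your plan has to stand on its own. Its architecture is sound: take $\PDoc(X)=\RDoc(X,1)$ and $\exists_f(\preda)=\gr{f}\rconv\rcomp\preda$, get Frobenius from the converse form of the modular law at $\relt=\gr{f}$, and build an isomorphism $\RDoc\cong\PtoR\PDoc$ over $\CC$. The easy direction is fine up to trivia: the witness is $\exists x$, not $\exists y$; you should also check $\rid_1=\top_1$; and ``essential image'' means equivalence, not isomorphism.

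The converse, however, is not proved. All of its content sits in step 5, which you leave open, and the map you write there is wrong. Already in $\Rel$, $\gr{\fpr}\rcomp\relr\rcomp\gr{\spr}\rconv\rcomp\top$ is the predicate $(x,y)\mapsto\exists y'.\,\relr(x,y')$. It forgets $y$, so it is not injective. You must cut down along $\gr{\spr}$ before collapsing to $1$, for example
\[
\widehat{\relr}=\big((\gr{\fpr}\rcomp\relr)\land\gr{\spr}\big)\rcomp\gr{!_Y},
\qquad
\check{\preda}=\gr{\fpr}\rconv\rcomp\big((\preda\rcomp\gr{!_Y}\rconv)\land\gr{\spr}\big).
\]

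Showing these maps are mutually inverse needs lemmas your sketch does not identify.
\begin{itemize}
\item For $\check{\widehat{\relr}}=\relr$: first, $\gr{!_Y}\rcomp\gr{!_Y}\rconv=\top_{Y,Y}$, which follows from $\rid_1=\top_1$. Next, if $\rels\order\gr{\spr}$ then $(\rels\rcomp\top)\land\gr{\spr}\order\rels\rcomp\rels\rconv\rcomp\gr{\spr}\order\rels$, by modularity and functionality. This reduces the round trip to $\gr{\fpr}\rconv\rcomp\big((\gr{\fpr}\rcomp\relr)\land\gr{\spr}\big)$, which by Frobenius equals $\relr\land(\gr{\fpr}\rconv\rcomp\gr{\spr})$. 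Finally, $\gr{\fpr}\rconv\rcomp\gr{\spr}=\top_{X,Y}$ comes from \cref{def:cartesian:4} of \cref{def:cartesian}, with $\rid_X$, $\top_{X,Y}$, $\top_{X,Y}$, $\rid_Y$ in the four slots.
\item For $\widehat{\check{\preda}}=\preda$: you need that a relation into $1$ is determined by its domain. That is, $\preda=c\rcomp\top_{X\times Y,1}$ with $c=(\preda\rcomp\preda\rconv)\land\rid_{X\times Y}$, which again uses $\rid_1=\top_1$ and modularity. From this, $(\preda\rcomp\gr{!_Y}\rconv)\land\gr{\spr}=c\rcomp\gr{\spr}$. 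Then the identity law $\rid_{X\times Y}=(\gr{\fpr}\rcomp\gr{\fpr}\rconv)\land(\gr{\spr}\rcomp\gr{\spr}\rconv)$, together with modularity, gives $(\gr{\fpr}\rcomp\gr{\fpr}\rconv\rcomp c\rcomp\gr{\spr})\land\gr{\spr}\order c\rcomp\gr{\spr}$.
\end{itemize}
Even after this, step 6 remains an uncomputed verification: naturality in $(X,Y)$, and transport of composition, identity and converse.

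The elementary part has a similar hole, because Beck--Chevalley along product projections is not what is required. By Frobenius, the axiom $\exists_{\id_X\times\Delta_A}(\preda)=\PDoc\reidx{\pi_{12}}(\preda)\land\PDoc\reidx{\pi_{23}}(\delta_A)$ reduces to Beck--Chevalley for the pullback of $\Delta_A$ along $\pi_{23}$. Concretely, you need $\gr{\pi_{23}}\rcomp\gr{\Delta_A}\rconv=\gr{\id_X\times\Delta_A}\rconv\rcomp\gr{\pi_2}$, with $\pi_2\colon X\times A\to A$. This does follow from the identity law on $X\times(A\times A)$ and \cref{def:cartesian:4}, but it must be shown.

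As it stands, you have a sound roadmap whose central computation is missing and, where written, misstated.
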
 

In the next subsections we will briefly recall some notions and results about quotients and extensional equality in relational doctrines. 
We refer the reader to 
\cite{DagninoP23,DagninoP25apal}. 
for more details.

\subsection{Extensional equality}\label{sect:gs}

Let \fun{\RDoc}{\bop{\CC}}{\Pos} be a relational doctrine.
Recall that 
a relation  $\relr\in\RDoc(X,Y)$ is
\begin{center}
\begin{tabular}{ll}
\emph{functional} if $\relr\rconv \rcomp \relr \order \rid_Y$&\emph{injective} if $\relr\rcomp\relr\rconv \order \rid_X$\\[1ex]
\emph{total} if $\rid_X \order \relr\rcomp\relr\rconv$&\emph{surjective} if $\rid_Y \order \relr\rconv \rcomp \relr$
\end{tabular}
\end{center}

For every arrow $f$ in $\CC$, its graph $\gr{f}$ is functional and total. 
We will say that an arrow $f$ is 
\emph{$\RDoc$-injective} (resp. \emph{$\RDoc$-surjective}) when its graph $\gr{f}$ is injective (resp. surjective). A arrow is \emph{$\RDoc$-bijective} when it is  $\RDoc$-injective and $\RDoc$-surjective.  

Then, recalling that on functional and total relation the order is actually the equality, 
one can easily show that, 
for every pair of parallel arrows $\fun{f,g}{X}{Y}$, the three conditions below are equivalent. 
\[
\rid_X\le \gr{f}\rcomp\gr{g}\rconv
\qquad
\rid_X\le \gr{g}\rcomp\gr{f}\rconv
\qquad
\gr{f}=\gr{g}
\]
These conditions define a notion of equality between the arrows of $\CC$ which is weaker than the actual equality in $\CC$. 
This motivates the following definition.

\begin{definition}\label[def]{def:rel-doc-ex}
A relational doctrine $\fun{\RDoc}{\bop\CC}{\Pos}$ is \emph{extensional} if any two arrows  \fun{f,g}{X}{Y} in \CC are equal whenever $\gr{f} = \gr{g}$. 
\end{definition}

\begin{example}\label[ex]{ex:rel-doc-ex}
The relational doctrines  $\Span\CC$ and $\jmSpan\CC$ of (jointly monic) spans presented in \cref{ex:rel-doc:span} are extensional as $\rid_X$ is $\spn{X}{\id_X}{X}{\id_X}{X}$.
The relational doctrines $\VRel\Qtl$ presented in \cref{ex:rel-doc:vrel} is extensional as for every set $X$ the metric relation $\rid_X$ is two-valued.
The relational doctrines $\PtoR{\PDoc}$ presented in \cref{ex:rel-doc:eed} are extensional if and only if $\PDoc$ has comprehensive diagonals in the sense of \cite{MaiettiME:eleqc}.
\end{example}

As shown in \cite{DagninoP26jpaa}, 
if $\RDoc$ is extensional, then each $\RDoc$-injective arrow is monic and each $\RDoc$-surjective arrow is epic, but the converse is not true in general. 
Furthermore, in general $\RDoc$-bijective arrows, which are both monic and epic, need not to be isomorphisms in general. 

There is a construction that takes a relational doctrine and gives a relational doctrine which is  extensional. The construction is also universal in the sense of the following proposition whose proof is in  \cite{DagninoP25apal} and where $\ERDtn$ denotes the full 2-subcategory of $\RDtn$ on those relational doctrines that are extensional.

\begin{proposition}\label{prop:excol}
The inclusion of $\ERDtn$ into $\RDtn$ has a left biadjoint. 
\end{proposition}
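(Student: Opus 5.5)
We prove \cref{prop:excol} by constructing the extensional collapse explicitly and checking its universal property. Given \fun{\RDoc}{\bop\CC}{\Pos}, let $\CC_\RDoc$ be the category with the objects of \CC and, as arrows $X\to Y$, the equivalence classes $[f]$ of arrows \fun{f}{X}{Y} of \CC, where $f\sim g$ iff $\gr{f}=\gr{g}$.

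The first step is to show that $\sim$ is a congruence, so that $\CC_\RDoc$ is a category. This follows from $\gr{g\circ f}=\gr{f}\rcomp\gr{g}$ and the monotonicity of $\rcomp$.

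Next we define the doctrine $\RDoc^{e}$ on $\CC_\RDoc$ by $\RDoc^{e}(X,Y)=\RDoc(X,Y)$ and $\RDoc^{e}\reidx{[f],[g]}=\RDoc\reidx{f,g}$. Reindexing is well defined on classes because $\RDoc\reidx{f,g}(\relr)=\gr{f}\rcomp\relr\rcomp\gr{g}\rconv$ depends only on the graphs. Identities, composition and converse are those of $\RDoc$, so all the axioms of \cref{def:rel-doc} are inherited. The graph of $[f]$ in $\RDoc^{e}$ is $\gr{f}$, so $\RDoc^{e}$ is extensional by construction. The quotient functor $\CC\to\CC_\RDoc$ together with the identity natural transformation gives a 1-arrow \oneAr{E}{\RDoc}{\RDoc^{e}}.

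The universal property says that, for every extensional $\SDoc$, precomposition with $E$ gives an equivalence (here in fact an isomorphism) of hom-categories $\RDtn(\RDoc^{e},\SDoc)\to\RDtn(\RDoc,\SDoc)$.
\begin{itemize}
\item On 1-arrows: a 1-arrow $F=\ple{\fn{F},\lift{F}}$ from $\RDoc$ to $\SDoc$ preserves identities and is natural, so it preserves graphs: $\lift{F}(\gr{f})=\gr{\fn{F}f}$. Hence $\gr{f}=\gr{g}$ implies $\gr{\fn{F}f}=\gr{\fn{F}g}$, and since $\SDoc$ is extensional, $\fn{F}f=\fn{F}g$. Thus $\fn{F}$ factors uniquely through $\CC_\RDoc$, and $\lift{F}$ carries over unchanged.
\item On 2-arrows: a 2-arrow is a natural transformation $\theta$ between the base functors satisfying an inequality that only involves the fibres. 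Naturality with respect to the classes $[f]$ is the same as naturality with respect to the representatives $f$, so 2-arrows correspond bijectively.
\end{itemize}

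The main point to verify carefully is preservation of graphs by 1-arrows. We have $\lift{F}_{X,Y}(\RDoc\reidx{f,\id}(\rid_Y))=\SDoc\reidx{\fn{F}f,\id}(\lift{F}(\rid_Y))=\SDoc\reidx{\fn{F}f,\id}(\rid_{\fn{F}Y})$, using strict naturality of $\lift{F}$ and preservation of $\rid$. Everything else is routine bookkeeping, so $E$ exhibits the left biadjoint.
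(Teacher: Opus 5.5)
Your proposal is correct and takes the same route as the paper. The paper defers the proof to \cite{DagninoP25apal}, but it describes the left biadjoint as the extensional collapse, obtained by quotienting base arrows by $f\sim g$ iff $\gr{f}=\gr{g}$, which is exactly your construction. Your verification that precomposition with the collapse 1-arrow is an isomorphism of hom-categories, using the fact that 1-arrows preserve graphs, supplies the biuniversal-arrow argument that this construction needs.
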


The left biadjoint is obtained taking a relational doctrine $\RDoc$ and quotienting the arrows of the base with respect to the equivalence relation that relates $f$ to $g$ if $\gr{f}=\gr{g}$. The resulting relational doctrine is called the \emph{extensional collapse} of $\RDoc$.

\subsection{Quotients and the quotient completion}
\label{sect:prelim:quot} 

Fix a relational doctrine $\fun{\RDoc}{\bop\CC}{\Pos}$. 
Let $X$ be an object of \CC. 
An \emph{$\RDoc$-equivalence relation} on $X$ is a relation 
$\eqrelr$ in $\RDoc(X,X)$ satisfying
\begin{center}
\begin{tabular}{lll}
reflexivity: $\rid_X\order\eqrelr$& symmetry: $\eqrelr\rconv\order\eqrelr$&transitivity: $\eqrelr\rcomp\eqrelr\order\eqrelr$
\end{tabular}
\end{center}

Every arrow \fun{f}{X}{Y} in \CC induces a $\RDoc$-equivalence relation on $X$, dubbed \emph{kernel} of $f$, given by $\ker_f^\RDoc=\gr{f}\rcomp\gr{f}\rconv$ (we will omit the reference to $\RDoc$ when this is clear from the context).
One immediately sees that for the relational doctrine \fun{\Rel}{\bop\Set}{\Pos} of set-theoretic relations the kernel of a function $f$ is precisely the set $\{(x,x')\mid f(x)=f(x')\}$. 

An $\RDoc$-\emph{quotient arrow} of $\eqrelr$ is an arrow \fun{q}{X}{W} in \CC such that 
$\eqrelr \order \ker_q$ and, 
for every arrow \fun{f}{X}{Z} with $\eqrelr\order\ker_f$, there is a unique arrow \fun{h}{W}{Z} with $f = h\circ q$.  

$\RDoc$-quotient arrows will be often call just \emph{quotient arrows}, when the relational doctrine $\RDoc$ is clear from the context.

We say that a quotient arrow \fun{q}{X}{W}  for $\eqrelr$ is 
\emph{effective} if $\eqrelr = \ker_q$, i.e. its kernel coincides with the equivalence relation, 
and it is \emph{surjective}\footnote{Surjective quotients arrows are also called \emph{descent quotients} such as in \cite{DagninoP25apal}.}
if $\gr{q}\rconv\rcomp\gr{q} = \rid_W$, i.e., $q$ is $\RDoc$-surjective.
Finally, we say that \emph{$\RDoc$ has quotients} if every $\RDoc$-equivalence relation has an effective and surjective $\RDoc$-quotient arrow. 

Any 1-arrow \oneAr{F}{\RDoc}{\SDoc}  in \RDtn 
preserves equivalence relations. 
We say that it 
preserves quotients if, 
whenever $q$ is an $\RDoc$-quotient arrow for an $\RDoc$-equivalence relation $\eqrelr$ on $X$, the arrow $\fn{F}q$ is an $\SDoc$-quotient arrow for the $\SDoc$-equivalence relation $\lift{F}_{X,X}(\eqrelr)$ on $\fn{F}X$. 
We denote by $\QRDtn$ the 2-full 2-subcategory of $\RDtn$ whose objects are relational doctrines with quotients and whose 1-arrows are those of $\RDtn$ that preserves quotients. We recall from \cite{DagninoP25apal} 
the following theorem showing that one can freely complete any relational doctrine with quotients and that these are an algebraic concept which is property-like \cite{BlackwellKP89,Kock95,KellyL97}. 

\begin{theorem}\label[thm]{thm:q-icompletion}
The inclusion 2-functor $\fun{\QRFun}{\QRDtn}{\RDtn}$ has a left biadjoint, 
the composition $\QMnd = \QRFun\circ\RQFun$ is a lax idempotent 2-monad on $\RDtn$ and 
$\QRDtn$ is equivalent to the 2-category of pseudoalgebras for $\QMnd$. 
\end{theorem}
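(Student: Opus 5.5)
The plan is to construct the left biadjoint $\RQFun$ explicitly as a quotient completion and then read off the monad-theoretic statements from the universal property.

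\textbf{The construction.} Given $\fun{\RDoc}{\bop\CC}{\Pos}$, let $\QC\RDoc$ have as objects the pairs $(X,\eqrelr)$, with $\eqrelr$ an $\RDoc$-equivalence relation on $X$. An arrow $(X,\eqrelr)\to(Y,\eqrels)$ is an arrow $\fun{f}{X}{Y}$ of $\CC$ with $\eqrelr\order\RDoc\reidx{f,f}(\eqrels)$. Two such arrows are identified when $\eqrelr\order\gr{f}\rcomp\eqrels\rcomp\gr{g}\rconv$; this is an equivalence relation because $\eqrelr$ and $\eqrels$ are reflexive, symmetric and transitive, and it is compatible with composition by lax naturality of $\rcomp$. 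The doctrine $\QR\RDoc$ sends $((X,\eqrelr),(Y,\eqrels))$ to the subposet of descent relations $\{\relr\in\RDoc(X,Y)\mid \eqrelr\rcomp\relr\rcomp\eqrels\order\relr\}$, and reindexing is the restriction of $\RDoc\reidx{f,g}$. The relational operations are:
\begin{itemize}
\item identity $\eqrelr$ on $(X,\eqrelr)$;
\item composition and converse inherited from $\RDoc$.
\end{itemize}
One checks that these are well defined on classes of arrows and satisfy the axioms of \cref{def:rel-doc}; the unit laws use $\eqrelr\rcomp\relr=\relr$, which holds by reflexivity together with the descent condition. The arrow $\id_X:(X,\eqrelr')\to(X,\eqrelr)$, for $\eqrelr$ a $\QR\RDoc$-equivalence relation on $(X,\eqrelr')$, is then an effective, surjective quotient arrow. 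Its kernel is $\eqrelr$, since the graph of $\id$ in $\QR\RDoc$ is $\eqrelr$, and surjectivity is immediate. So $\QR\RDoc$ has quotients.

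\textbf{The unit and the universal property.} The unit $\eta_\RDoc:\RDoc\to\QR\RDoc$ sends $X$ to $(X,\rid_X)$ and each relation to itself. Now let $\SDoc$ have quotients and let $F:\RDoc\to\SDoc$ be a 1-arrow. Extend $F$ to $(X,\eqrelr)$ by choosing a quotient $\fn{F}X\to W$ of $\lift{F}(\eqrelr)$. Descent relations are transported along quotient arrows using effectiveness and surjectivity, via $\relr\mapsto \gr{q}\rconv\rcomp\lift{F}(\relr)\rcomp\gr{q'}$. Well-definedness on the identified arrows of $\QC\RDoc$ uses effectiveness together with the universal property of quotients. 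This gives a quotient-preserving extension, unique up to unique invertible 2-cell, and 2-cells extend in the same way. Together these yield the biadjunction $\RQFun\dashv\QRFun$.

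\textbf{Lax idempotency and monadicity.} For lax idempotency I would show that $\QMnd\eta_\RDoc\dashv\mu_\RDoc$ with counit invertible, or equivalently $\mu\dashv\eta\QMnd$. Concretely, there is a 2-cell $\QMnd\eta\Rightarrow\eta\QMnd$ induced by the identity arrows $(X,\eqrelr)\to((X,\rid),\eqrelr)$ versus $((X,\eqrelr),\eqrelr)$, built from the inequality $\rid_X\order\eqrelr$. Since having quotients is property-like, with quotients unique up to isomorphism, pseudoalgebra structures are essentially unique. Kelly–Lack's characterization then gives that pseudoalgebras are exactly the objects where $\eta$ has a left adjoint retraction. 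Such a left adjoint picks, for each $(X,\eqrelr)$, a quotient object, and one identifies these with effective, surjective quotients.

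\textbf{Expected obstacle.} I expect the hardest step to be this last identification: showing that a reflection of $\eta_\RDoc$ yields \emph{effective and surjective} quotients, and that 1-arrows of pseudoalgebras are exactly the quotient-preserving ones. For this I would use the adjunction inequalities in the fibres to derive both $\eqrelr=\ker_q$ and $\gr{q}\rconv\rcomp\gr{q}=\rid$.
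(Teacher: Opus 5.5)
\textbf{Gap: your $\QC\RDoc$ builds the extensional completion.} The problem is in the definition of $\QC\RDoc$. You identify $f,g\colon(X,\eqrelr)\to(Y,\eqrels)$ whenever $\eqrelr\order\gr{f}\rcomp\eqrels\rcomp\gr{g}\rconv$. The $\QR\RDoc$-graph of $f$ is $\RDoc\reidx{f,\id_Y}(\eqrels)=\gr{f}\rcomp\eqrels$, so this identifies exactly the arrows with equal $\QR\RDoc$-graphs. The doctrine you build is therefore extensional: it is the \emph{extensional} quotient completion $\EQR\RDoc$, the quotient analogue of $\ECR\RDoc$ in \cref{ssect:comp-compl}. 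That is the left biadjoint of $\EQRDtn\hookrightarrow\RDtn$, obtained by composing \cref{thm:q-icompletion} with \cref{thm:e-q-reflection}. It is not a left biadjoint of $\QRFun$, because $\QRDtn$ contains non-extensional doctrines.

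The step that breaks is ``well-definedness on the identified arrows uses effectiveness together with the universal property of quotients''. Suppose $f\sim g$ and $\bar f,\bar g\colon W\to W'$ are the induced arrows between the chosen $\SDoc$-quotients. Effectiveness and surjectivity give only $\rid_W\order\gr{\bar f}\rcomp\gr{\bar g}\rconv$, i.e.\ $\gr{\bar f}=\gr{\bar g}$. Concluding $\bar f=\bar g$ needs $\SDoc$ to be extensional.

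\textbf{Counterexample.} Let $\CC$ be the one-object category of the group $\mathbb{Z}/2=\{\id,s\}$, and let every fibre of $\RDoc$ be a singleton.
\begin{itemize}
\item The only equivalence relation is $\rid=\top$, and $\id$ is an effective surjective quotient arrow for it, so $\RDoc$ lies in $\QRDtn$.
\item Your $\QC\RDoc$ identifies $s$ with $\id$. So for any $\bar F\colon\QR\RDoc\to\RDoc$, the functor underlying $\bar F\circ\eta_\RDoc$ sends $s$ to $\id$.
\item Naturality of an invertible 2-cell $\bar F\circ\eta_\RDoc\cong\Id_\RDoc$ would then force $s=\id$, a contradiction.
\end{itemize}
The same defect reappears in your last step. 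With your $\QR\RDoc$, a left adjoint retraction of $\eta_\RDoc$ exists only if $\RDoc$ is extensional. So your pseudoalgebras would be $\EQRDtn$, not $\QRDtn$.

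\textbf{Repair.} Drop the identification. Take as arrows $(X,\eqrelr)\to(Y,\eqrels)$ all arrows $f$ of $\CC$ with $\eqrelr\order\RDoc\reidx{f,f}(\eqrels)$, exactly as in the intensional comprehension completion $\CR\RDoc$ the paper spells out; $\QR\RDoc$ is dual to it by \eqref{eq:contresempio}. The extension along $\eta_\RDoc$ is then defined on arrows directly by the universal property of the chosen quotients, functoriality follows from uniqueness, and nothing has to be checked on equivalence classes.

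With that change, the rest of your plan has the right shape:
\begin{itemize}
\item the quotient arrows $\id_X\colon(X,\eqrelr')\to(X,\eqrelr)$;
\item transport of descent data by $\relr\mapsto\gr{q}\rconv\rcomp\lift{F}(\relr)\rcomp\gr{q'}$, which is natural thanks to effectiveness and surjectivity;
\item extension of 2-cells via the universal property;
\item the 2-cell $\QMnd\eta\Rightarrow\eta\QMnd$ given by identities;
\item the characterisation of pseudoalgebras as doctrines in which $\eta$ has a left adjoint retraction.
\end{itemize}
The paper itself does not reprove this theorem; it recalls it from \cite{DagninoP25apal}. It keeps this intensional statement separate from the extensional reflection precisely because of the distinction your construction collapses.
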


The left biadjoint $\RQFun$ 
sends $\fun{\RDoc}{\bop\CC}{\Pos}$ to \fun{\QR\RDoc}{\bop{\QC\RDoc}}{\Pos}. 

The extensional collapse of a relational doctrine preserves quotients. Thus, if we denote by  $\EQRDtn$ the full 2-subcategory of $\QRDtn$ on those relational doctrines with quotients which are also extensional we get the following result. 

\begin{theorem}\label[thm]{thm:e-q-reflection}
The 2-category $\EQRDtn$ is a reflective 2-subcategory of $\QRDtn$. 
\end{theorem}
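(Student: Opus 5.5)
The plan is to exhibit a left 2-adjoint to the inclusion $\fun{J}{\EQRDtn}{\QRDtn}$ by restricting the extensional collapse of \cref{prop:excol}. Given $\fun{\RDoc}{\bop\CC}{\Pos}$ with quotients, let $\CC_e$ be the quotient of \CC identifying $f$ and $g$ whenever $\gr{f}=\gr{g}$, with quotient functor $\fun{E}{\CC}{\CC_e}$. Let $\RDoc_e$ be the relational doctrine on $\CC_e$ with the same fibres as $\RDoc$. Reindexing is well defined because $\RDoc\reidx{f,g}(\relr)=\gr{f}\rcomp\relr\rcomp\gr{g}\rconv$ depends only on the graphs. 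The unit is the 1-arrow $\ple{E,\id}\colon\RDoc\to\RDoc_e$. The paper already says that the extensional collapse preserves quotients; I will verify this in two steps.

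First I show that $\RDoc_e$ has quotients. Its equivalence relations on $X$ are exactly the $\RDoc$-equivalence relations on $X$, and kernels are unchanged, since $\ker_{Ef}=\gr{f}\rcomp\gr{f}\rconv$. Take an effective surjective quotient $\fun{q}{X}{W}$ of $\eqrelr$ in \CC. Then $Eq$ is still effective and surjective, because both properties depend only on graphs.

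For the universal property, let $[f]\colon X\to Z$ satisfy $\eqrelr\order\ker_f$. Then $f$ factors as $f=h\circ q$ in \CC, so $[f]=[h]\circ Eq$. For uniqueness, suppose $[h]\circ Eq=[h']\circ Eq$, i.e.\ $\gr{q}\rcomp\gr{h}=\gr{q}\rcomp\gr{h'}$. Precomposing with $\gr{q}\rconv$ and using surjectivity $\gr{q}\rconv\rcomp\gr{q}=\rid_W$ gives $\gr{h}=\gr{h'}$, hence $[h]=[h']$. This is the key point, and it is exactly where surjectivity of quotients is used. The same argument shows that $\ple{E,\id}$ preserves quotients, so the unit lies in $\QRDtn$.

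Next I establish the universal property. Let $\SDoc$ be extensional with quotients and let $\oneAr{F}{\RDoc}{\SDoc}$ preserve quotients. By \cref{prop:excol}, $F$ factors essentially uniquely as $F'\circ\ple{E,\id}$, where $\fn{F'}[f]=\fn{F}f$. This is well defined because $\lift{F}$ preserves graphs, so $\gr{f}=\gr{g}$ implies $\gr{\fn Ff}=\gr{\fn Fg}$, and extensionality of $\SDoc$ then gives $\fn{F}f=\fn{F}g$. It remains to check that $F'$ preserves quotients. Every $\RDoc_e$-quotient arrow of $\eqrelr$ is isomorphic in $\CC_e$ to some $Eq$ with $q$ an $\RDoc$-quotient, by uniqueness of universal arrows. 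Moreover $F'(Eq)=\fn{F}q$ is an $\SDoc$-quotient. Since $F'$ sends isomorphisms to isomorphisms, preservation follows.

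On 2-arrows, the correspondence is that of \cref{prop:excol}. Because $\QRDtn$ is 2-full in $\RDtn$, no further conditions arise there.

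The main obstacle is upgrading ``left biadjoint'' to a genuine reflection. This needs the factorization $F'$ to be unique on the nose, not merely up to isomorphism. That holds because $E$ is bijective on objects and surjective on arrows, and because $\lift{F'}$ must equal $\lift{F}$ fibrewise. So I will check strict uniqueness directly rather than rely only on the biadjunction.
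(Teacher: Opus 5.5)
Your proposal is correct and follows the paper's route. The paper gets this result by restricting the extensional collapse (the left biadjoint of \cref{prop:excol}) to $\QRDtn$, on the grounds that the collapse preserves quotients. You supply the checks the paper leaves to its reference: the collapse has effective surjective quotients, with uniqueness obtained via surjectivity; the unit and the induced factorization preserve quotients; and nothing extra is needed for 2-arrows, by 2-fullness. Your additional check that the factorization is unique on the nose, so the reflection is strict, is correct but not needed for the statement.
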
 

By composing \cref{thm:q-icompletion,thm:e-q-reflection}, we obtain a left biadjoint 
$\fun{\REQFun}{\RDtn}{\EQRDtn}$ to the inclusion of $\EQRDtn$ int $\RDtn$, 
which sends $\fun{\RDoc}{\bop\CC}{\Pos}$ to \fun{\EQR\RDoc}{\bop{\EQC\RDoc}}{\Pos}. It will be called \emph{extensional quotient completion} to be distinguished from the \emph{intensional quotient completion} of \cref{thm:q-icompletion}.

%% file: predicates.tex
\section{Predicates and comprehension}
\label{sect:predicates}

Consider a relational doctrine $\fun{\RDoc}{\bop\CC}{\Pos}$ and an object $A$ of $\CC$, 
we observe that the poset $\RDoc(A,A)$ of endorelations on $A$ carries an ordered involutive monoid structure given by 
relational identity, composition and converse. 
Then, in line with \cref{sect:intro2}, 
 a predicate over $A$ is an element $\preda$ in $\RDoc(A,A)$ which is a coequivalence relation in the sense of the following definition.

\begin{definition}\label[def]{def:predicate}
Given a relational doctrine $\fun{\RDoc}{\bop\CC}{\Pos}$,  and  $A$ an object  of $\CC$, an \emph{$\RDoc$-predicate} over $A$ is a coequivalence relation over $A$, i.e. an element $\preda$ in $\RDoc(A,A)$ satisfying
\begin{center}
\begin{tabular}{lll}
coreflexivity: $\preda\order\rid_A$& (co)symmetry: $\preda\rconv\order\preda$ 
&cotransitivity: 
$\preda\order\preda\rcomp\preda$
\end{tabular}
\end{center}
\end{definition}

We will denote by $\Pred\RDoc(A)$ the subposet of $\RDoc(A,A)$  on $\RDoc$-predicates  over $A$. 
Note that $\ple{\RDoc(A,A), \order, \rcomp, \rid_A, (\blank)\rconv}$ is an ordered involutive monoid, hence, 
by results in \cref{sect:intro2}, 
we have that the elements of  $\Pred\RDoc(A)$, \ie the predicates over $A$, are invariant under the operation of taking the converse, they have $\rid_A$ as top element and they are closed under composition if and only if the composition of two predicates is commutative (in which case $\Pred\RDoc(A)$) becomes an meet-semilattice.
In the following examples we will describe  the posets of predicates in some significant classes of relational doctrines. 

\begin{example}\label[ex]{ex:rel-doc-coeq:alg} 
Recall from \cref{ex:rel-doc:alg} that any ordered involutive monoid $M$ can be regarded as a relational doctrine. 
Then, $\Pred{M}$ is the subposet of $M$ already defined in \cref{sect:intro2}.
\end{example}

\begin{example}\label[ex]{ex:rel-doc-coeq:vrel}
Let $\Qtl = \ple{\Car\Qtl,\qord,\qmul,\qone, \qinv}$ be an involutive quantale and consider the  relational doctrine $\VRel\Qtl$ of $\Qtl$-relations of \cref{ex:rel-doc:vrel}. 
Given a set $A$, 
it is easy to see that a predicat $\preda \in \Pred{\VRel\Qtl}(A)$ is a function 
$\preda : A \to \Pred{\Qtl}$, 
where $\Pred\Qtl$ is defined as in \cref{ex:rel-doc-coeq:alg}. 
When $\Qtl$ is commutative, $\Pred\Qtl$ becomes a complete Heyting algebra, thus $\Pred{\VRel\Qtl}(A)$ becomes a complete Heyting algebra as well. 
In particular, for the quantale $\RPos$ of extended non-negative real numbers, giving rise to the doctrine $\VRel\RPos$ of metric relations,  the poset $\Pred{\VRel\RPos}(A)$ is isomorphic to the powerset of $A$, because $\Pred\RPos = \{0,\infty\}$. 
\end{example}

Modular relational doctrines have a special property: 
predicates over an object $A$ coincide with coreflexive relations on $A$.

\begin{proposition} \label[prop]{prop:pred-modular} 
Let $\fun\RDoc{\bop\CC}{\Pos}$ be a modular relational doctrine and $A$ an object of $\CC$. 
Then, we have 
$\Pred\RDoc(A) = \{ \preda\in\RDoc(A,A) \mid \preda\order\rid_A \}$ and 
$\preda\rcomp\predb = \preda\land\predb$, for all $\preda,\predb\in\Pred\RDoc(A)$. 
\end{proposition}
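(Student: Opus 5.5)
The plan is to reduce the statement to \cref{prop:mim}, applied to the ordered involutive monoid $\ple{\RDoc(A,A), \order, \rcomp, \rid_A, (\blank)\rconv}$. The fiber $\RDoc(A,A)$ has finite meets, since $\RDoc$ has fibered meets. So what remains is to check that the modularity law of \cref{def:modular}, instantiated at $X=Y=A$, gives the one-object modularity law used in \cref{prop:mim}, namely
\[
(\relr\rcomp\rels)\land\relt \order \relr\rcomp(\rels\land\relr\rconv\rcomp\relt).
\]
The doctrinal version reads
\[
\relr\rcomp\relt\land\rels \order (\relr\land\rels\rcomp\relt\rconv)\rcomp\relt,
\]
which is the "right-hand" form of the law, whereas \cref{prop:mim} uses the "left-hand" form. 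I would derive one from the other by taking converses. The converse is an order-isomorphism, since it is monotone and involutive, and the same involutivity shows it preserves binary meets within a fiber. It also reverses composition.

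Concretely, to prove the left-hand form for $\relr,\rels,\relt$:
\begin{itemize}
\item Apply \cref{def:modular} to $\rels\rconv$ (in the role of $\relr$), $\relt\rconv$ (in the role of $\rels$) and $\relr\rconv$ (in the role of $\relt$). This gives
\[
\rels\rconv\rcomp\relr\rconv\land\relt\rconv \order (\rels\rconv\land\relt\rconv\rcomp\relr)\rcomp\relr\rconv.
\]
\item Take converses of both sides. The left side becomes $(\relr\rcomp\rels)\land\relt$.
\item The right side becomes $\relr\rcomp(\rels\land\relr\rconv\rcomp\relt)$, which is exactly the required inequality.
\end{itemize}
This is the only step needing care: one must check that the converse commutes with meets in $\RDoc(A,A)$ even without the cartesian hypothesis of \cref{def:cartesian}. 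Inside a single fiber this is automatic, because an order automorphism preserves meets.

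With this in place, $\RDoc(A,A)$ is a modular involutive monoid. \cref{prop:mim} then gives directly that $\Pred\RDoc(A)$ consists of exactly the relations below $\rid_A$, and that $\ple{\Pred\RDoc(A),\rcomp,\rid_A}$ is a meet-semilattice whose meet is $\rcomp$.

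The remaining point is that this meet agrees with the fiber meet $\land$. The set of coreflexive relations is a down-set of $\RDoc(A,A)$ containing $\preda\land\predb$ whenever it contains $\preda$ and $\predb$, so $\preda\land\predb$ is a predicate. Hence the meet computed in the subposet $\Pred\RDoc(A)$ coincides with the fiber meet. Since meets in a poset are unique, $\preda\rcomp\predb=\preda\land\predb$.
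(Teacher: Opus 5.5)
Your proposal is correct and follows the paper's route: the paper's proof simply observes that $\RDoc(A,A)$ is a modular involutive monoid and applies \cref{prop:mim}. Your two additional checks are sound and fill in what the paper leaves implicit: taking converses to turn the doctrinal modular law into the form $(\relr\rcomp\rels)\land\relt\order\relr\rcomp(\rels\land\relr\rconv\rcomp\relt)$ that \cref{prop:mim} uses, and verifying that the meet on $\Pred\RDoc(A)$ coincides with the fiber meet $\land$.
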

\begin{proof}
Note that $\RDoc(A,A)$ is a modular involutive monoid. The claim follow by \cref{prop:mim}.
\end{proof}

In particular, in a modular relational doctrine $\RDoc$, the posets $\Pred\RDoc(A)$ are meet-semilattices. 

\begin{example}\label{ex:rel-doc-coeq:alg2}
Let $A$ be a relation algebra, then by \cref{ex:rel-doc:alg} we know that it can be regarded as a relational doctrine. 
We know that $A$ is modular (\ref{prop:ra-modular}).
Then, we have that $\Pred{A} = \{ a \in \Car{A} \mid a\leq \fone \}$ and it is an meet-semilattice (\ref{prop:mim}). 
\end{example}

\begin{example}\label[ex]{ex:rel-doc-coeq:eed}
Recall from \cref{ex:rel-doc:eed} that every existential elementary doctrine $\fun\PDoc{\CC\op}{\Pos}$ induces a relational doctrine 
$\fun{\PtoR\PDoc}{\bop\CC}{\Pos}$ that is cartesian and modular and where 
$\PtoR\PDoc(A,B) = \PDoc(A\times B)$. 
Hence, we have $\Pred{\PtoR\PDoc}(A) = \{ \preda \in \PDoc(A\times A) \mid \preda\order \delta_A \}$, which is an meet-semilattice. 
It is easy to see that $\Pred{\PtoR\PDoc}(A)$ is isomorphic to $\PDoc(A)$. Indeed, 
the map $\Pred{\PtoR\PDoc}(A)\to\PDoc(A)$ is given by reindexing  along the diagonal $\Delta_A$ and 
the map $\PDoc(A) \to \Pred{\PtoR\PDoc}(A)$ is given by its left adjoint 
$\Ex^\PDoc_{\Delta_A}(\phi) = \PDoc\reidx{\pi_1}(\phi)\land \delta_A$ and these are clearly inverse to each other. 
\end{example}

\begin{example}\label[ex]{ex:rel-doc-coeq:span}
Let \CC be a category with weak finite limits, and denote by $\Psi(A)$ the poset reflection of the slice category $\CC/A$ (this is called the poset of weak subobjects in \cite{MaiettiME:eleqc}). 
The poset $\Pred{\Span\CC}(A)$ of $\Span\CC$-predicates over $A$ is (isomorphic to) $\Psi(A)$. 
To see this, observe that, since $\Span\CC$ is modular, the $\Span\CC$-predicates are just the coreflexive relations, i.e. those spans $\spn{A}{f}{X}{g}{A}$ that factors through $\rid_A=\spn{A}{\id_A}{A}{\id_A}{A}$, i.e. such that there is $\fun{k}{X}{A}$ with $k=f$ and $k=g$. Thus the coreflexive relations of $\Span\CC$ are those span whose legs are equal. Similarly, if \CC is regular, the $\jmSpan\CC$-predicates over an object $A$ are the coreflexive relations over   $A$ and these are the subobjects of $A$ (see \cite{FreydS90}).
\end{example}

Having a notion of $\RDoc$-predicate, we can now define when $\RDoc$ has comprehension for them. 
First, notice that every arrow $\fun{f}{A}{B}$ in the base category of a relational doctrine induces a predicate on the codomain $B$, dubbed \emph{image} of $f$, which is $\img_f^\RDoc=\gr{f}\rconv\rcomp\gr{f}$ (as usual, we omit the reference to $\RDoc$ when this is clear from the context). 
Then, given a predicate $\preda$ over an object $A$, its comprehension can be intuitively described as the best underapproximation of $\preda$ as the image of an arrow in the base. 

Let $\fun{\RDoc}{\bop\CC}{\Pos}$ be a relational doctrine and $\preda$ an $\RDoc$-predicate over $A$. 

An arrow $\fun{k}{X}{A}$ is a \emph{weak $\RDoc$-comprehension arrow} for $\preda$ if
\[
\img_k\order\preda
\]
and for every $\fun{f}{Y}{A}$ such that $\img_f\order\preda$ there is an arrow $\fun{h}{Y}{X}$ making the following diagram commute. 
\[
\xymatrix{
X\ar[r]^-{k}&A\\
Y\ar[ru]_-{f}\ar[u]^-{h}&
}
\]
The arrow $k$ is a \emph{strong $\RDoc$-comprehension arrow} if the $h$ is unique or, equivalently, if $k$ is monic.

Let $\fun{k}{X}{A}$ be a (weak/strong) $\RDoc$-comprehension arrow for $\preda$. 
We say that $k$ is \emph{full} if $\img_k = \preda$. 
We say that $k$ is \emph{injective} if $\gr{k}\rcomp\gr{k}\rconv = \rid_X$, that is, $k$ is $\RDoc$-injective. 

\begin{definition}\label[def]{def: predicate}
Let $\fun{\RDoc}{\bop\CC}{\Pos}$ be a relational doctrine. We say that $\RDoc$ has 
\emph{(full/injective/weak/strong) comprehensions} if, every predicate $\preda$ over an object $A$ in \CC  has a (full/$\RDoc$-injective/weak/strong) $\RDoc$-comprehension arrow. 
We say that $\RDoc$ has comprehensions (without any further specification) if it has full injective and strong comprehension.
\end{definition}

\begin{example}\label[ex]{ex:comp:vrel} 
The relational doctrine  \fun{\VRel\Qtl}{\bop\Set}{\Pos} presented in \cref{ex:rel-doc:vrel} has strong injective comprehensions. 
By \cref{ex:rel-doc-coeq:vrel}, we know that a predicate $\preda$ on a set $A$ corresponds to a function 
$\fun\preda{A}{\Pred\Qtl}$. Then, the comprehension of $\preda$ is the injective function 
$\fun{k_\preda}{A_\preda}{A}$ where 
$S_\preda = \{ a \in A \mid \fone \order \preda(a) \}$. 
This works essentially because, for every function $\fun{f}{X}{A}$, its image can only take two values, either $\fone$ or $\bot$, because the diagonal relation takes  only these two values. 
For the same reason, in general these comprehensions are not full. 
However, when $\Pred\Qtl = \{\fone,\bot\}$ the relational doctrine $\VRel\Qtl$ has full injective strong comprehensions. 
This happens for instance  for $\RPos$, where $\Pred\RPos = \{0,\infty\}$. 
\end{example}

The notion of comprehension we have just introduced for relational doctrines 
generalises the corresponding one for existential elementary doctrines as  in \cite{MaiettiME:eleqc}, 
which in turn generalises the one introduced by Lawvere in \cite{Lawvere70}. 
The next example shows how this happens. 

\begin{example}\label[ex]{ex:comp:eed}
Let  $\fun{\PDoc}{\CC\op}{\Pos}$ be an elementary and existential doctrine and  $\fun{\PtoR\PDoc}{\bop\CC}{\Pos}$ the relational doctrine associated with it as in \cref{ex:rel-doc:eed}. 
Recall from \cref{ex:rel-doc-coeq:eed} that $\PtoR\PDoc$-predicates over an object $A$ of \CC bijectively corresponds to $\PDoc(A)$. 
If $\preda$ is an $\PtoR\PDoc$-predicate over $A$, the corresponding element of $\PDoc(A)$ is $\Ex\reidx{\Delta_A}(\preda)$. 
An arrow $\fun{f}{X}{A}$ in \CC
is an $\PtoR\PDoc$-comprehension arrow (resp. weak, full, injective) for $\preda$ if and only if it is a $\PDoc$-comprehension arrow (resp. weak, full, $\PDoc$-injective) for $\Ex\reidx{\Delta_A}(\preda)$ 
in the sense of \cite{MaiettiME:eleqc}. Indeed, $\Ex\reidx{\Delta_A}$ reflects the order, thus for every arrow $\fun{f}{X}{A}$  in \CC, it holds that $\img_f\order\preda$ if and only if $\Ex\reidx{\Delta_A}(\img_f) = \Ex\reidx{f\times f}\Ex\reidx{\Delta_X}(\top)=\Ex\reidx{f}(\top)\order\preda$. Where the last inequality precisely gives the condition with respect to which the comprehension schema is formulated in elementary and existential doctrines.
\end{example}

\begin{example}\label[ex]{ex:comp:span} 

Recall from \cref{ex:rel-doc-coeq:span} that a $\Span\CC$-predicate $\preda$ over $A$ is represented by a span of the form $ \spn{A}{f}{X}{f}{A}$. The arrow $f$ is a weak full $\Span\CC$-comprehension arrow for $\preda$. If \CC is regular, we can reason similarly for $\jmSpan\CC$, except that in this case $f$ is  a strong full and injective $\jmSpan\CC$-comprehension arrow for $\preda$. 
\end{example}

Every 1-arrow of relational doctrines preserves predicates, \ie if \oneAr{F}{\RDoc}{\SDoc} is a 1-arrow from \fun{\RDoc}{\bop\CC}{\Pos} to \fun{\SDoc}{\bop\D}{\Pos} and $\preda$ in $\RDoc(A,A)$ is a  $\RDoc$-predicate, then  $\lift{F}_{A,A}(\preda)$ in $\SDoc(\fn{F}A,\fn{F}A)$ is a  $\SDoc$-predicate.
The relational doctrines with comprehension are the objects of the 2-category $\CRDtn$ which is a 2-full 2-subcategory of $\RDtn$ where the 1-arrows are those 1-arrows of $\RDtn$ that preserve comprehension in the sense that if 
$\fun{k}{X}{A}$ is a $\RDoc$-comprehension arrow for an $\RDoc$-predicate $\preda$ in $\RDoc(A,A)$, then  $\fun{\fn{F}k}{\fn{F}X}{\fn{F}A}$ is a $\SDoc$-comprehension arrow for the $\SDoc$-predicate $\lift{F}_{A,A}(\preda)$.

%% file: duality.tex
\section{Comprehensions are the dual of quotients}
\label{sect:duality}

In this section we address the main goal of the paper: 
we introduce a duality between quotients and comprehensions in reltional doctrines and use it  to derive results for both of these concepts. 
Let us start recalling some notations. 
We have already used $\CC\op$ to denote the opposite category of $\CC$. 
By regarding posets as degenerate categories, 
given a poset  $L = \ple{\Car{L},\le}$, 
we have $L\op = \ple{\Car{L},\geq}$. 
Similarly, if $\Ct{K}$ is a 2-category, 
$\Ct{K}\co$ denotes the category whose objects and 1-arrows are those of $\Ct{K}$ and where 
a 2-arrow $\twoAr{\nu}{f}{g}$ is the same as a 2-arrow $\twoAr{\nu}{g}{f}$ in $\Ct{K}$, that is, 
the hom-category $\Ct{K}\co(A,B)$ is the category $\Ct{K}(A,B)\op$. 
Note that we have $(\CC\op)\op = \CC$ and $(\Ct{K}\co)\co = \Ct{K}$. 
The construction of the opposite category extends to a 2-functor 
$\fun{\mathrm{O}}{\Ct{Cat}\co}{\Ct{Cat}}$ as follows: 
given a category $\CC$, we have $\mathrm{O}(\CC) = \CC\op$, 
given a functor $\fun{F}{\ct{C}}{\ct{D}}$ the functor $\fun{\mathrm{O}(F) = F\op}{\ct{C}\op}{\ct{D}\op}$ acts exactly as $F$ and, 
given functors $\fun{F,G}{\ct{C}}{\ct{D}}$ and a natural transformation $\nt{\nu}{F}{G}$, the natural transformation $\nt{\mathrm{O}(\nu) = \nu\op}{G\op}{F\op}$ has the component
$\fun{\nu\op_X}{GX}{FX}$ in $\ct{D}\op$ that is given by $\fun{\nu_X}{FX}{GX}$ in $\ct{D}$. 
Similarly, given a 2-functor $\fun{F}{\Ct{K}}{\Ct{H}}$, we can define a 2-functor $\fun{F\co}{\Ct{K}\co}{\Ct{H}\co}$. 
Then, it is easy to check that $\mathrm{O}$  is an involution,, that is, $\mathrm{O}\circ\mathrm{O}\co = \Id_{\Ct{Cat}}$. 

We are going to define an involution $\fun{\OP}{\RDtn\co}{\RDtn}$. 
Let $\fun{\RDoc}{\bop\CC}{\Pos}$ be a relational doctrine. 
Recall from \cref{sect:gs} that every map of the form $\RDoc\reidx{f,g}$ has a left adjoint $\Ex\reidx{f,g}$. Therefore the lax-preservation of the relational operations by reindexing (as written in \cref{def:rel-doc}) can be equivalently reformulated as 
\[\Ex\reidx{f,g}(\relr)\rcomp\Ex\reidx{g,h}(\rels) \oporder \Ex\reidx{f,h}(\relr\rcomp\rels)
\qquad
\rid_Y \oporder \Ex\reidx{f,f}\rid_X
\qquad
(\Ex\reidx{f,g}(\relr))\rconv \oporder \Ex\reidx{g,f}(\relr\rconv)\] 

for every $\relr\in\RDoc(A,B)$, $\rels\in\RDoc(B,C)$ and 
\fun{f}{A}{X}, \fun{g}{B}{Y} and \fun{h}{C}{Z}  in \CC. 
The only non trivial inequality is the one concerning the relational composition, 
which can be checked by the following(in)equalities: 
\[
\Ex\reidx{f,h}(\relr\rcomp\rels)=\gr{f}\rconv\rcomp\relr\rcomp\rels\rcomp\gr{h}
=\gr{f}\rconv\rcomp\relr\rcomp\rid_B\rcomp\rels\rcomp\gr{h}\order\gr{f}\rconv\rcomp\relr\rcomp\gr{g}\rcomp\gr{g}\rconv\rcomp\rels\rcomp\gr{h}=\Ex\reidx{f,g}(\relr)\rcomp\Ex\reidx{g,h}(\rels) 
\]
Denote by $\fun{\RDoc\op}{\bdrit\CC}{\Pos}$ the functor whose action on an pair of objects $(X,Y)$ and on a pair of arrows $(f,g)$ is defined as follows
\[
\RDoc\op(X,Y)=\RDoc(Y,X)\op 
\quad
\quad
\RDoc\op\reidx{f,g} = \Ex^\RDoc\reidx{g,f}
\]
Since  $\bdrit\CC$ is $\bop{\CC\op}$ and since the axioms satisfied by the relational operations are equational (so they do not depend on the order) the inequalities displayed above prove that the functor $\RDoc\op$ is a relational doctrine based on $\CC\op$, where the identity relation and the converse of a relation are those of $\RDoc$, while the relational composition of $\relr$ in $\RDoc\op(X,Y)$ with $\rels$ in $\RDoc\op(Y,Z)$ is defined as $\rels\rcomp\relr$. 

 It is immediate to verify that 
$(\RDoc\op)\op = \RDoc$, hence we call the relational doctrine $\RDoc\op$ the \emph{opposite} of $\RDoc$.

Consider now a 1-arrow 
\oneAr{F}{\RDoc}{\SDoc}  where $\SDoc$ is based on $\ct{D}$, 
then we set 
\[
\fn{F\op}=\fun{F\op}{\CC\op}{\ct{D}\op}
\qquad 
\lift{F\op}_{X,Y} = \fun{\lift{F}\op_{Y,X}}{\RDoc(Y,X)\op}{\SDoc(\fn{F}Y,\fn{F}X)\op}
\]
Recall now that, given 1-arrows $\oneAr{F,G}{\RDoc}{\SDoc}$, a 2-arrow \twoAr{\theta}{F}{G} in $\RDtn$ is 
a natural transformation $\nt{\theta}{F}{G}$ such that 
for every $\relr$ in $\RDoc(X,Y)$, $\lift{F}_{X,Y}(\relr) \order \SDoc\reidx{\theta_X,\theta_Y}(\lift{G}_{X,Y}(\relr))$ holds in $\SDoc(\fn{F}X,\fn{F}Y)$ 
or, equivalently, such that $\Ex\reidx{\theta_X,\theta_Y}\lift{F}_{X,Y}(\relr) \order  \lift{G}_{X,Y}(\relr)$. 
This is equivalent to 
\[
 \lift{G}\op_{Y,X}(\relr)\order \RDoc\op\reidx{\theta_Y,\theta_X}\lift{F}_{Y,X}(\relr) 
\]
in $\SDoc\op(\fn{F}Y,\fn{F}X)$. 
Therefore, the natural transformation $\nt{\theta\op}{\fn{G}\op}{\fn{F}\op}$ determines a 2-arrow 
$\twoAr{\theta\op}{G\op}{F\op}$ in $\RDtn$. 

The 2-functor 
$\fun{\OP}{\RDtn\co}{\RDtn}$ 
is then defined by $\OP(\RDoc) = \RDoc\op$, $\OP(F) = F\op$ and $\OP(\theta) = \theta\op$ and it is an involution as proved by the following result. 

\begin{proposition}\label{prop:invo}
The 2-functor $\fun{\OP}{\RDtn\co}{\RDtn}$ is an involution, i.e., $\OP\circ\OP\co = \Id_{\RDtn}$. 
\end{proposition}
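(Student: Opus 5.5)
The identity $\OP\circ\OP\co = \Id_{\RDtn}$ is checked separately on objects, 1-arrows and 2-arrows.

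\emph{Objects.} By definition, $(\RDoc\op)\op$ is based on $(\CC\op)\op = \CC$. Its fibres are
\[
(\RDoc\op)\op(X,Y) = (\RDoc\op(Y,X))\op = (\RDoc(X,Y)\op)\op = \RDoc(X,Y).
\]
Its reindexing along $(f,g)$ is the left adjoint, computed in $\RDoc\op$, of $\RDoc\op\reidx{g,f} = \Ex^\RDoc\reidx{f,g}$. Because the order of $\RDoc\op$ is reversed, a left adjoint in $\RDoc\op$ is a right adjoint in $\RDoc$. Hence this reindexing is the right adjoint of $\Ex^\RDoc\reidx{f,g}$, which is $\RDoc\reidx{f,g}$ by uniqueness of adjoints. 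As a sanity check, the formula $\Ex\reidx{f,g}(\rels) = \gr{f}\rconv\rcomp\rels\rcomp\gr{g}$ agrees with this computation in $\RDoc\op$, once we note that graphs in $\RDoc\op$ are converses of graphs in $\RDoc$ with composition reversed.

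The relational structure also comes back unchanged. Identities and converses are those of $\RDoc$ at each step. Composition is reversed twice, so $(\relr,\rels)\mapsto \relr\rcomp\rels$ is recovered. This is the claim $(\RDoc\op)\op=\RDoc$ already noted in the text.

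\emph{1-arrows.} For $F = \ple{\fn{F},\lift{F}}$ we get $\fn{(F\op)\op} = (\fn{F}\op)\op = \fn{F}$ from the involutivity of $\mathrm{O}$. The components are
\[
\lift{(F\op)\op}_{X,Y} = (\lift{F\op}_{Y,X})\op = ((\lift{F}_{X,Y})\op)\op = \lift{F}_{X,Y}.
\]
As maps of underlying sets these are literally the same function.

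\emph{2-arrows.} For $\theta$ we get $(\theta\op)\op = \theta$, again by involutivity of $\mathrm{O}$ on natural transformations. The 2-cell condition is a property, not extra data, so nothing more needs checking. Two further routine points complete the argument: that $\OP$ is a 2-functor (it preserves composition and identities of 1- and 2-arrows, since it acts componentwise as $\mathrm{O}$ and as order-reversal of monotone maps), and that $\OP\co$ has the right variance.

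\emph{Main obstacle.} The only step with real content is the identification of the reindexing of $(\RDoc\op)\op$ with $\RDoc\reidx{f,g}$. It turns on the observation that adjunctions flip under order reversal, together with uniqueness of adjoints. I would write this step out carefully and treat all the other identities as immediate unwinding of the definitions.
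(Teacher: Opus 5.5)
Your proof is correct and takes the same approach as the paper: $(\RDoc\op)\op=\RDoc$ on objects, plus the involutivity of $(\blank)\op$ on $\Ct{Cat}$ for 1- and 2-arrows, with your uniqueness-of-adjoints argument supplying the reindexing detail the paper calls immediate. One small correction to your sanity check: the paper shows the $\RDoc\op$-graph of $f$ is the very same element $\gr{f}$, not its converse (only its typing changes because the base is $\CC\op$), and with composition reversed this gives the reindexing of $(\RDoc\op)\op$ along $(f,g)$ as $\gr{f}\rcomp\relr\rcomp\gr{g}\rconv=\RDoc\reidx{f,g}(\relr)$.
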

\begin{proof}
It follows immediately by noting that $(\RDoc\op)\op = \RDoc$ and  using the same result for 
$\fun{(\blank)\op}{\Ct{Cat}\co}{\Ct{Cat}}$. 
\end{proof} 

Some concepts that one can express in $\RDoc$ take a dual form in $\RDoc\op$. We explore those that will be useful for the duality that we want to study in this section. 
First, we note that graphs are not affected by passing to the dual relational doctrines. 
Indeed, let $\fun{f}{Y}{X}$ be an arrow in $\CC\op$, the $\RDoc\op$-graph of $f$  is the following relation in $\RDoc\op(Y,X)=\RDoc(X,Y)\op$
\[
\RDoc\op\reidx{f,\id_X}(\rid_X) = \Ex^\RDoc\reidx{\id_X,f}(\rid_X) = \gr{\id_X}\rconv\rcomp\rid_X\rcomp\gr{f} = \rid_X\rconv\rcomp\rid_X\rcomp\gr{f} = \gr{f}
\]
where $\gr{f}$ is the $\RDoc$-graph of $f$.

In any category epimorphisms and monomorphisms, that, morally, correspond to surjections and injections, are dual concepts. 
However, when one writes down the conditions of surjectivity and of injectivity  (for example in a first order language) they no longer appear as dual. 
The duality between surjections and injections instead becomes evident using relational doctrines. 

\begin{proposition}\label{prop:inj-to-surj} 
Let $\fun\RDoc{\bop\CC}{\Pos}$ be a relational doctrine. 
A relation $\relr\in\RDoc(X,Y)$ is 
$\RDoc$-surjective if and only if it is $\RDoc\op$-injective.
\end{proposition}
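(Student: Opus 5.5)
We simply unfold the definitions of the relational operations in $\RDoc\op$ and check that the defining inequality of $\RDoc\op$-injectivity is, read in $\RDoc$, exactly the defining inequality of $\RDoc$-surjectivity. First we fix where $\relr$ lives. A relation $\relr\in\RDoc(X,Y)$ is an element of $\RDoc\op(Y,X)=\RDoc(X,Y)\op$. So, regarded in $\RDoc\op$, it is a relation from $Y$ to $X$. We must therefore show that $\rid_Y\order\relr\rconv\rcomp\relr$ in $\RDoc(Y,Y)$ holds if and only if $\relr$ is injective as a relation of $\RDoc\op(Y,X)$.

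Next we compute the $\RDoc\op$-injectivity condition. It asks that $\relr\rcomp^{\op}\relr\rconv\order^{\op}\rid^{\op}_Y$, where the operations and the order are those of $\RDoc\op$. By the construction of $\RDoc\op$:
\begin{itemize}
\item the identities and converses coincide with those of $\RDoc$;
\item the composite in $\RDoc\op$ of $\relr\in\RDoc\op(Y,X)$ followed by $\rels\in\RDoc\op(X,Y)$ is $\rels\rcomp\relr$ computed in $\RDoc$;
\item the fibre $\RDoc\op(Y,Y)=\RDoc(Y,Y)\op$ carries the reversed order.
\end{itemize}
Taking $\rels=\relr\rconv$, the composite $\relr\rcomp^{\op}\relr\rconv$ is $\relr\rconv\rcomp\relr$ computed in $\RDoc$, an element of $\RDoc(Y,Y)$. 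The inequality $\relr\rconv\rcomp\relr\order^{\op}\rid_Y$ in $\RDoc(Y,Y)\op$ means $\rid_Y\order\relr\rconv\rcomp\relr$ in $\RDoc(Y,Y)$, which is precisely $\RDoc$-surjectivity of $\relr$. Every step is an equivalence, so both directions follow at once.

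There is no real obstacle, since the argument is pure bookkeeping. The only point needing care is keeping track of the swapped indices, namely that $\relr\in\RDoc(X,Y)$ becomes an arrow $Y\to X$ in $\RDoc\op$, and of the reversed composition order. Misreading either would wrongly pair surjectivity with $\RDoc\op$-totality or functionality instead of injectivity. As a consistency check, the same computation shows that $\RDoc$-injectivity corresponds to $\RDoc\op$-surjectivity, in agreement with $(\RDoc\op)\op=\RDoc$.
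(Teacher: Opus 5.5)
Your proposal is correct and is essentially the paper's proof: both simply unfold the identity, converse, reversed composition and reversed fibre order of $\RDoc\op$ to see that $\RDoc\op$-injectivity of $\relr$ is literally $\rid_Y\order\relr\rconv\rcomp\relr$ in $\RDoc(Y,Y)$. Your version only adds explicit bookkeeping of the index swap ($\relr\in\RDoc(X,Y)=\RDoc\op(Y,X)$ as sets) and of the reversed composition, which the paper's one-line chain of equivalences leaves implicit.
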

\begin{proof}
We have that 
$\relr$ is $\RDoc$-surjective iff 
$\rid_Y \order \relr\rconv\rcomp\relr$ in $\RDoc(Y,Y)$ iff 
$\relr\rcomp\relr\rconv \order \rid_Y$ in $\RDoc\op(Y,Y)$ iff 
$\relr$ is $\RDoc\op$-injective. 
\end{proof}

By relying on the equality $(\RDoc\op)\op = \RDoc$, the above proposition ensures also that a relation is $\RDoc$-injective if and only if it is $\RDoc\op$-surjective. 
Moreover, since the graphs of arrows in $\RDoc$ and $\RDoc\op$ are given by the same relation,  this duality  between injectivity and surjectivity applies also to arrows. 
Furthermore, a similar duality holds between functional and total relations.

A direct consequence of the definition of $\RDoc\op$ is that a relation is an $\RDoc$-equivalence relation if and only if it  is an $\RDoc\op$-predicate. This helps in showing the duality between another pair of  concepts: the kernel and the image of an arrow $\fun{u}{X}{Y}$. Recall that the kernel of $u$ is  $\ker_u^\RDoc=\gr{u}\rcomp\gr{u}\rconv$ in $\RDoc(X,X)$ and the image of $u$  is $\img_u^\RDoc=\gr{u}\rconv\rcomp\gr{u}$ in $\RDoc(Y,Y)$.

\begin{proposition}\label{prop:imagekerneldual} Let $\fun{\RDoc}{\bop\CC}{\Pos}$ be a relational doctrine. For every $\fun{u}{X}{Y}$ in \CC it holds that 
$\ker_u^\RDoc=\img_u^{\RDoc\op}$.
 \end{proposition}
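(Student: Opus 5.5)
The plan is to unfold both sides using the definition of $\RDoc\op$ given above and check that they name the same element of the same underlying set. The first step is to fix where the two sides live. The arrow $\fun{u}{X}{Y}$ in $\CC$ is an arrow $\fun{u}{Y}{X}$ in $\CC\op$, so its image in $\RDoc\op$ is a predicate over its codomain $X$ in $\CC\op$. It is therefore an element of $\RDoc\op(X,X)=\RDoc(X,X)\op$, whose underlying set is that of $\RDoc(X,X)$. The kernel $\ker_u^\RDoc$ also lives in $\RDoc(X,X)$, so it makes sense to compare them as elements of that set.

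Next I would invoke the observation made just before \cref{prop:inj-to-surj}: the $\RDoc\op$-graph of $u$, regarded as an arrow of $\CC\op$, is the relation $\gr{u}$. This relation is its $\RDoc$-graph, now viewed as an element of $\RDoc\op(Y,X)=\RDoc(X,Y)\op$.

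It then remains to compute $\img_u^{\RDoc\op}=\gr{u}\rconv\rcomp^{\op}\gr{u}$, where $\rcomp^{\op}$ denotes composition in $\RDoc\op$. The converse in $\RDoc\op$ is that of $\RDoc$. Composition in $\RDoc\op$ reverses the order of the arguments: for $\relr\in\RDoc\op(A,B)$ and $\rels\in\RDoc\op(B,C)$ the composite is $\rels\rcomp\relr$ computed in $\RDoc$. Here $\gr{u}\rconv\in\RDoc\op(X,Y)$ and $\gr{u}\in\RDoc\op(Y,X)$, so the composite equals $\gr{u}\rcomp\gr{u}\rconv$ computed in $\RDoc$, which is $\ker_u^\RDoc$ by definition.

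The only delicate point is keeping track of the index swap $\RDoc\op(X,Y)=\RDoc(Y,X)\op$, so that the composite is well-typed and the arguments are reversed in the right order. Once this is done, the computation is a one-line equality, and no inequalities are involved.
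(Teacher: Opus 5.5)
Your proposal is correct and takes the same approach as the paper. The paper's proof simply says the claim is immediate from the construction of $\RDoc\op$ and the fact that $\RDoc$- and $\RDoc\op$-graphs coincide; you additionally spell out the index swap and the reversal of arguments in the opposite composition, which yields $\gr{u}\rcomp\gr{u}\rconv=\ker_u^\RDoc$.
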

\begin{proof}
Immediate for the construction of $\RDoc\op$ and the observation  that the graphs in $\RDoc$ and $\RDoc\op$ are the same.
\end{proof}

The duality between kernels and images leads to the following.

\begin{proposition}\label{prop:quot-to-comp}
Let $\fun{\RDoc}{\bop\CC}{\Pos}$ be a relational doctrine and $\relr$ an $\RDoc$-equivalence relation over $X$. 
An arrow $\fun{u}{X}{Y}$ in $\CC$ is an $\RDoc$-quotient arrow for $\relr$ if and only if is an $\RDoc\op$-comprehension arrow for $\relr$.
Moreover, $u$ is an effective $\RDoc$-quotient arrow if and only if it is a full $\RDoc\op$-comprehension arrow.
\end{proposition}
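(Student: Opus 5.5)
The plan is to unfold both definitions and match them clause by clause, using \cref{prop:imagekerneldual} together with the observation that reversing the order in the fibres turns every inequality $\le$ in $\RDoc$ into $\ge$ in $\RDoc\op$. First, $\relr$ is an $\RDoc$-equivalence relation on $X$ exactly when it is an $\RDoc\op$-predicate over $X$. This holds because the identity and converse of $\RDoc\op$ are those of $\RDoc$, composition is reversed, and the order is reversed. Reflexivity becomes coreflexivity, symmetry stays symmetry, and transitivity $\relr\rcomp\relr\le\relr$ becomes cotransitivity $\relr\le\relr\rcomp\relr$ in $\RDoc\op(X,X)$. So both sides of the statement make sense.

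Next, an arrow $\fun{u}{X}{Y}$ of $\CC$ is an arrow $\fun{u}{Y}{X}$ of $\CC\op$. By \cref{prop:imagekerneldual} we have $\ker^\RDoc_u = \img^{\RDoc\op}_u$ as elements of the common underlying set $\RDoc(X,X)=\RDoc\op(X,X)$. Hence $\relr\le\ker^\RDoc_u$ in $\RDoc(X,X)$ if and only if $\img^{\RDoc\op}_u\le\relr$ in $\RDoc\op(X,X)$. This matches the first clause of each definition, and the same equivalence applied to an arbitrary $\fun{f}{X}{Z}$ (that is, $\fun{f}{Z}{X}$ in $\CC\op$) matches the hypotheses on test arrows.

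It remains to match the factorisation clauses. The quotient condition asks for a unique $\fun{h}{Y}{Z}$ in $\CC$ with $f=h\circ u$; in $\CC\op$ this is a unique $\fun{h}{Z}{Y}$ with $f=u\circ h$, which is exactly the strong comprehension condition for $u$. I read ``$\RDoc\op$-comprehension arrow'' in the statement as strong comprehension arrow, since quotient arrows carry uniqueness; a weak version would correspond dually to weak quotients. Finally, effectiveness means $\relr=\ker^\RDoc_u$, which by \cref{prop:imagekerneldual} is $\relr=\img^{\RDoc\op}_u$, that is, fullness.

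No step is a real obstacle. The only care needed is bookkeeping: the fibre $\RDoc\op(X,X)$ is $\RDoc(X,X)\op$, and the graphs in $\RDoc$ and $\RDoc\op$ coincide. Both facts are already established before \cref{prop:imagekerneldual}, so the argument is a direct translation.
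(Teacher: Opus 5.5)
Your proposal is correct and follows essentially the same route as the paper. Both arguments translate each clause of the quotient definition into the opposite doctrine via \cref{prop:imagekerneldual} and the reversal of order and composition in $\RDoc\op$. Under that translation, uniqueness of the factorisation becomes the strong comprehension condition and effectiveness becomes fullness. You are right that ``comprehension arrow'' must be read as strong here, which is also how the paper's proof uses it.
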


\begin{proof}
Let $u$ be  an $\RDoc$-quotient arrow for $\relr$. Then $\relr \order \img_u^\RDoc$ in $\RDoc(X,X)$ so, by \cref{prop:imagekerneldual} the arrow \fun{u}{Y}{X} in $\CC\op$  and the $\RDoc\op$-predicate $\relr$ is such that  $ \img_U^{\RDoc\op}\order \relr $ in $\RDoc\op(X,X)$. If $\fun{f}{Z}{X}$ in $\CC\op$ is such that $ \img_f^{\RDoc\op}\order \relr $ in $\RDoc\op(X,X)$, then, again by \cref{prop:imagekerneldual}, the arrow $\fun{f}{X}{Z}$ in $\CC$  is such that $ \relr \order \ker_f^{\RDoc} $. By the universal property of quotients, there is a unique arrow $\fun{k}{Y}{Z}$ with $k\circ u=f$. Hence in $\CC\op$ there is a unique arrow $\fun{k}{Z}{Y}$ such that $u\circ k=f$. Similarly one proves the converse. 
Finally 
 $u$, as an $\RDoc$-quotient arrow, is 
effective if and only if $\relr = \ker_u^{\RDoc}$ in $\RDoc(X,X)$.By \cref{prop:imagekerneldual}, this holds if and only if $\relr = \img_u^{\RDoc\op}$ in $\RDoc\op(X,X)$, i.e. if and only if $u$, as an $\RDoc$-comprehension arrow, is full. 
\end{proof}

\cref{prop:quot-to-comp} together with \cref{prop:inj-to-surj} proves the following.

\begin{proposition}\label[prop]{prop:q-c-duality}
Let $\fun{\RDoc}{\bop\CC}{\Pos}$ be a relational doctrine. 
Then, $\RDoc$ has quotients if  and only if $\RDoc\op$ has comprehensions. 
\end{proposition}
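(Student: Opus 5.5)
The plan is to unfold both sides into conditions on the same data and match them one by one, using \cref{prop:quot-to-comp} and \cref{prop:inj-to-surj}. First, $\RDoc$-equivalence relations on $X$ and $\RDoc\op$-predicates over $X$ are literally the same elements of the same underlying set $\RDoc(X,X)$. Indeed, $\RDoc\op(X,X)=\RDoc(X,X)\op$ has the same identity and converse, and composition is reversed. So reflexivity $\rid_X\order\eqrelr$ becomes coreflexivity $\eqrelr\order\rid_X$ in $\RDoc\op$. Symmetry is unchanged up to reversing the order, and since $\eqrelr\rconv\order\eqrelr$ is equivalent to $\eqrelr\rconv=\eqrelr$, it remains cosymmetry. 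Transitivity $\eqrelr\rcomp\eqrelr\order\eqrelr$ becomes cotransitivity, because $\eqrelr\rcomp\eqrelr$ is symmetric in the two factors. Hence the quantification ``for every $\RDoc$-equivalence relation'' coincides with ``for every $\RDoc\op$-predicate'', and both range over the objects $X$ of $\CC$, which are also the objects of $\CC\op$.

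Next, fix such an $\eqrelr$ on $X$ and an arrow $u$, viewed as $\fun{u}{X}{Y}$ in $\CC$ or as $\fun{u}{Y}{X}$ in $\CC\op$. By \cref{prop:quot-to-comp}, $u$ is an $\RDoc$-quotient arrow for $\eqrelr$ if and only if it is an $\RDoc\op$-comprehension arrow. The uniqueness of the factorization in the quotient definition transfers to uniqueness in $\CC\op$, so this is a strong comprehension arrow; note that strongness is equivalent to $u$ being monic in $\CC\op$, which is the same as being epic in $\CC$. The same proposition gives that effectiveness corresponds to fullness.

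Finally, a surjective quotient means $\gr{u}$ is $\RDoc$-surjective. Since graphs in $\RDoc$ and $\RDoc\op$ coincide, \cref{prop:inj-to-surj} makes this equivalent to $\gr{u}$ being $\RDoc\op$-injective, i.e.\ $u$ is an injective $\RDoc\op$-comprehension arrow. Combining the three correspondences, every $\RDoc$-equivalence relation has an effective surjective quotient arrow if and only if every $\RDoc\op$-predicate has a full, injective, strong comprehension arrow, which is the claim.

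The only step needing care is the bookkeeping of the direction of the order and of the composition when translating the three axioms and the (in)equalities $\eqrelr\order\ker_u$ versus $\img_u\order\eqrelr$. In particular, one must check that uniqueness of mediating arrows corresponds exactly to the paper's notion of strong comprehension. This holds because the definition of strong literally asks for uniqueness of $h$, which is the dual of uniqueness in the quotient universal property. I expect no real obstacle beyond this.
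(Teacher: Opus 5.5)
Your proposal is correct and follows the paper's own route. The paper obtains the statement directly by combining \cref{prop:quot-to-comp} (quotient arrows are the $\RDoc\op$-comprehension arrows, and effective corresponds to full) with \cref{prop:inj-to-surj} (surjective corresponds to $\RDoc\op$-injective, graphs being unchanged); your explicit check that $\RDoc$-equivalence relations are exactly $\RDoc\op$-predicates, and your remark on strongness, just spell out details the paper leaves implicit.
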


Note that, again because $(\RDoc\op)\op = \RDoc$, we also have that $\RDoc$ has comprehensions if and only if $\RDoc\op$ has quotients. 

Recall that two categories $\ct{C}$ and $\Ct{H}$ are dually isomorphic if there is an isomorphism between $\ct{C}$ and $\Ct{H}\op$. 
If  $\Ct{K}$ and $\Ct{H}$  are 2-categories such that  $\Ct{K}\co$  and $\Ct{H}$  are isomorphic, we say that they are \emph{2-dually isomorphic}.

\begin{proposition}\label[prop]{prop:q-c-duality-2-cat}
The 2-categories $\QRDtn$ and $\CRDtn$ are 2-dually isomorphic.
\end{proposition}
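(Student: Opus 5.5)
The plan is to restrict the involution $\fun{\OP}{\RDtn\co}{\RDtn}$ of \cref{prop:invo} to the relevant sub-2-categories. Concretely, I will show that $\OP$ maps $\QRDtn\co$ into $\CRDtn$. Symmetrically, $\OP$ (viewed as $\RDtn\co\to\RDtn$, using $\OP\co$ where needed) maps $\CRDtn\co$ into $\QRDtn$. The two restrictions are then mutually inverse by \cref{prop:invo}, since $\OP\circ\OP\co=\Id_{\RDtn}$. Because $\QRDtn$ and $\CRDtn$ are both 2-full in $\RDtn$, the only things to check are objects and 1-arrows; 2-arrows come for free, because $\OP$ is already a bijection on 2-arrows between any given pair of 1-arrows.

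On objects, \cref{prop:q-c-duality} says $\RDoc$ has quotients iff $\RDoc\op$ has comprehensions, and the remark after it gives the converse direction via $(\RDoc\op)\op=\RDoc$. So $\OP$ sends objects of $\QRDtn$ exactly onto objects of $\CRDtn$, and vice versa.

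For 1-arrows, let $\oneAr{F}{\RDoc}{\SDoc}$ preserve quotients. I must show $F\op$ preserves comprehensions. Take an $\RDoc\op$-comprehension arrow $k$ for an $\RDoc\op$-predicate $\relr$ over $X$. Then $\relr$ is an $\RDoc$-equivalence relation, and by \cref{prop:quot-to-comp} $k$ is an $\RDoc$-quotient arrow for $\relr$. Hence $\fn{F}k$ is an $\SDoc$-quotient arrow for $\lift{F}_{X,X}(\relr)$. Applying \cref{prop:quot-to-comp} again, now to $\SDoc$, shows that $\fn{F}k$ is an $\SDoc\op$-comprehension arrow for the same relation. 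That relation is $\lift{F\op}_{X,X}(\relr)$, since $\lift{F\op}_{X,X}$ acts as $\lift{F}_{X,X}$ on underlying elements. The reverse implication is the same argument run backwards, with $F=(F\op)\op$.

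The main obstacle is a mismatch between the two definitions of ``preservation''. Preservation of comprehensions in $\CRDtn$ quantifies over comprehension arrows. Preservation of quotients quantifies over $\RDoc$-quotient arrows, which need not be effective or surjective. \cref{prop:quot-to-comp} matches a plain quotient arrow with a plain comprehension arrow, but the paper's ``comprehension arrow'' without qualifiers may carry a strong/full/injective reading. I will first read the definition of $\CRDtn$ literally as plain (weak-with-uniqueness) comprehension arrows. Under that reading, \cref{prop:quot-to-comp} gives an exact match: uniqueness of the mediating arrow on one side corresponds to uniqueness on the other. If the intended reading includes fullness and injectivity, I will use the ``moreover'' clause of \cref{prop:quot-to-comp} (effective $\leftrightarrow$ full) together with \cref{prop:inj-to-surj} (surjective $\leftrightarrow$ injective), noting that graphs coincide in $\RDoc$ and $\RDoc\op$. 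This transports each extra property across, and the correspondence of 1-arrows goes through unchanged.
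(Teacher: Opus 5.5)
Your proposal is correct and follows the paper's proof: restrict the involution $\OP$ using \cref{prop:q-c-duality} on objects and \cref{prop:quot-to-comp} on 1-arrows, with 2-fullness handling 2-arrows. Your literal reading of ``comprehension arrow'' (uniqueness, no extra properties) is the one the paper uses in \cref{prop:quot-to-comp}, so the fallback is unnecessary. In any case, in a doctrine with quotients (resp.\ comprehensions) every quotient (resp.\ comprehension) arrow is isomorphic to an effective surjective (resp.\ full injective) one, so it inherits those properties.
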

\begin{proof}
By \cref{prop:q-c-duality,prop:quot-to-comp}, we have that if $\oneAr{F}{\RDoc}{\SDoc}$ is a 1-arrow in $\QRDtn$, then $\oneAr{F\op}{\RDoc\op}{\SDoc\op}$ is a 1-arrow in $\CRDtn$. Therefore, the 2-functor $\OP$ restricts to a 2-functor 
from $\QRDtn\co$ to $\CRDtn$, which is an isomorphism. 
\end{proof} 

It is also easy to see that extensionality is  a self-dual property. 
Indeed, given a relational doctrine $\RDoc$, 
because in $\RDoc$ and $\RDoc\op$ the identity relations and the graphs are the same, 
we have that 
$\RDoc$ is extensional if and only if $\RDoc\op$ is extensional.
In other words, the functor $\OP$ restricts to an involution on the 2-category $\ERDtn$ of extensional relational doctrines. 
This fact,  together with \cref{prop:q-c-duality-2-cat} proves the following result, where $\ECRDtn$ is the full 2-subcategory of $\CRDtn$ on those relational doctrines with comprehensions that are also extensional. 

\begin{proposition}\label[prop]{prop:e-q-c-duality-2-cat}
The 2-categories $\EQRDtn$ and $\ECRDtn$ are 2-dually isomorphic.
\end{proposition}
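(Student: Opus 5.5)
The plan is to restrict the isomorphism of \cref{prop:q-c-duality-2-cat} along the two full 2-subcategories $\EQRDtn\subseteq\QRDtn$ and $\ECRDtn\subseteq\CRDtn$. By \cref{prop:q-c-duality-2-cat}, the involution $\OP$ restricts to a 2-functor $\QRDtn\co\to\CRDtn$ which is an isomorphism. Its inverse is the restriction of $\OP$ to $\CRDtn\co\to\QRDtn$, using \cref{prop:invo}. Since both $\EQRDtn$ and $\ECRDtn$ are full 2-subcategories, determined only by a condition on objects, it suffices to check one thing. This restricted isomorphism, and its inverse, must send extensional objects to extensional objects.

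The key step is therefore self-duality of extensionality: $\RDoc$ is extensional if and only if $\RDoc\op$ is. I would prove this directly from the remark before \cref{prop:inj-to-surj}. For $\fun{f}{Y}{X}$ in $\CC\op$ (that is, $\fun{f}{X}{Y}$ in $\CC$), the $\RDoc\op$-graph of $f$ is the same element $\gr{f}$ as its $\RDoc$-graph, now viewed in $\RDoc\op(Y,X)=\RDoc(X,Y)\op$. Hence for parallel $f,g$ the condition "$\Gamma^{\RDoc\op}_f=\Gamma^{\RDoc\op}_g$" is literally "$\gr{f}=\gr{g}$". Equality of arrows in $\CC\op$ is also equality in $\CC$. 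So the two extensionality conditions coincide. Combined with $(\RDoc\op)\op=\RDoc$, this shows $\OP$ maps $\ERDtn\co$ into $\ERDtn$, and in fact restricts to an involution there.

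Next, the restricted functor $\OP\colon\QRDtn\co\to\CRDtn$ maps an object of $\EQRDtn$ to one with comprehensions (by \cref{prop:q-c-duality}) that is also extensional, so it lands in $\ECRDtn$. Symmetrically, the inverse maps $\ECRDtn\co$ into $\EQRDtn$. Because the subcategories are full on 1- and 2-arrows, hom-categories are preserved verbatim. Note that $(\EQRDtn)\co$ is the full sub-2-category of $\QRDtn\co$ on the same objects. The two restrictions are therefore mutually inverse 2-functors, giving $\EQRDtn\co\cong\ECRDtn$.

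There is no real obstacle here. The only point needing care is to confirm that fullness of the subcategories makes the restriction of a 2-isomorphism again a 2-isomorphism. This holds because a 2-isomorphism that bijects objects satisfying the defining property automatically bijects all hom-categories between them.
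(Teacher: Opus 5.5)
Your proposal is correct and matches the paper's argument: extensionality is self-dual because $\RDoc$ and $\RDoc\op$ have the same identity relations and graphs, so the isomorphism $\QRDtn\co\cong\CRDtn$ induced by $\OP$ restricts to the full 2-subcategories on extensional objects. One small slip in direction: the inverse is the restriction of $\OP\co$, a 2-functor $\CRDtn\to\QRDtn\co$, rather than a 2-functor $\CRDtn\co\to\QRDtn$, and this changes nothing in the argument.
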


 Thanks to \cref{prop:q-c-duality-2-cat,prop:e-q-c-duality-2-cat}  one can immediately derive properties on relational doctrines with comprehension from the corresponding known properties of relational doctrines with quotients. We explore some relevant ones.

\subsection{The comprehension completions}\label{ssect:comp-compl}

Here we describe universal constructions to freely add comprehensions to any relational doctrine, deriving them from the corresponding constructions for quotients.
First, recall that if 
$\fun{\mathrm{R}}{\Ct{K}}{\Ct{H}}$ is a 2-functor with a left biadjoint $\fun{\mathrm{L}}{\Ct{H}}{\Ct{K}}$, then 
$\fun{\mathrm{L}\co}{\Ct{H}\co}{\Ct{K}\co}$ is the left biadjoint of $\fun{\mathrm{R}\co}{\Ct{K}\co}{\Ct{H}\co}$. 
Moreover, if $\fun{\mathrm{O}}{\Ct{K}\co}{\Ct{K}}$ is an involution and $\fun{\mathrm{T}}{\Ct{K}}{\Ct{K}}$ is a (lax idempotent) 2-monad on $\Ct{K}$ whose 2-category of pseudoalgebras is $\Ct{H}$, then 
$\fun{\mathrm{O}\circ\mathrm{T}\co\circ\mathrm{O}\co}{\Ct{K}}{\Ct{K}}$ is a (oplax idempotent) 2-monad on $\Ct{K}$, whose 2-category of pseudoalgebras is $\Ct{H}\co$. 

\begin{proposition}\label[prop]{prop:c-icompletion}
 The inclusion $\fun{\CRFun}{\CRDtn}{\RDtn}$ has a left bi-adjoint $\fun{\RCFun}{\RDtn}{\CRDtn}$, 
the composition $\CMnd = \CRFun\circ\RCFun$ is an oplax idempotent 2-monad on $\RDtn$ and 
$\CRDtn$ is equivalent to the 2-category of pseudoalgebras for $\CMnd$. 
\end{proposition}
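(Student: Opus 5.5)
The plan is to transport \cref{thm:q-icompletion} along the involution $\OP$ of \cref{prop:invo}, using the 2-dual isomorphism of \cref{prop:q-c-duality-2-cat}. We define
\[ \RCFun = \OP\circ\RQFun\co\circ\OP\co , \]
regarded as a 2-functor $\RDtn\to\CRDtn$. Concretely, it sends $\RDoc$ to $(\QR{(\RDoc\op)})\op$, based on $(\QC{\RDoc\op})\op$. This uses the fact, from the proof of \cref{prop:q-c-duality-2-cat}, that $\OP$ restricts to an isomorphism $\QRDtn\co\cong\CRDtn$ whose inverse is again $\OP$.

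The first step is to check that $\CRFun$ corresponds to $\QRFun$ under this isomorphism, namely $\CRFun = \OP\circ\QRFun\co\circ\OP\co$ restricted to $\CRDtn$. This holds because $\OP$ maps 1-arrows preserving quotients exactly to 1-arrows preserving comprehensions, by \cref{prop:quot-to-comp}, and $\CRDtn$ is 2-full in $\RDtn$ just as $\QRDtn$ is.

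The second step is the biadjunction. By the recalled fact, $\RQFun\co$ is left biadjoint to $\QRFun\co\colon\QRDtn\co\to\RDtn\co$. Conjugating both sides by the 2-isomorphisms $\OP$ (on $\RDtn\co\cong\RDtn$) and its restriction (on $\QRDtn\co\cong\CRDtn$) preserves biadjunctions. Hence $\RCFun$ is left biadjoint to $\OP\circ\QRFun\co\circ\OP\co = \CRFun$. The one point to be careful about is that conjugating by a 2-isomorphism whose domain carries a $\co$ does not swap the roles of left and right adjoint. It does not, since $\co$ reverses only 2-cells, not 1-cells, and this is exactly why the recalled fact holds.

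The third step concerns the monad:
\[ \CMnd = \CRFun\circ\RCFun = \OP\circ\QRFun\co\circ\RQFun\co\circ\OP\co = \OP\circ\QMnd\co\circ\OP\co . \]
By the second recalled fact, applied with $\mathrm{O}=\OP$ and $\mathrm{T}=\QMnd$, this is an oplax idempotent 2-monad. Its pseudoalgebras form $(\QRDtn)\co$ transported along $\OP$. By \cref{thm:q-icompletion} and \cref{prop:q-c-duality-2-cat}, this is equivalent to $\CRDtn$. Here the lax idempotency of $\QMnd$ flips to oplax idempotency because $\co$ reverses the 2-cells witnessing the adjunctions between units and multiplications.

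The main obstacle is bookkeeping rather than content: one must verify that the equivalence "$\QRDtn\simeq$ pseudoalgebras of $\QMnd$" transports coherently under $\co$ and $\OP$. We would argue that the 2-category of pseudoalgebras for $\mathrm{O}\,\mathrm{T}\co\,\mathrm{O}\co$ is isomorphic, via $\mathrm{O}$, to $(\mathrm{Ps}\text{-}\mathrm{T}\text{-}\mathrm{Alg})\co$, because pseudoalgebra data and axioms are invariant under reversing 2-cells up to reversing their invertible coherence isomorphisms. We then compose this isomorphism with the restricted isomorphism $\OP$.
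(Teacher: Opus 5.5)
Your proposal is correct and follows the paper's proof essentially step for step. The paper also takes $\RCFun=\mathrm{F}\circ\RQFun\co\circ\OP\co$, where $\mathrm{F}\colon\QRDtn\co\to\CRDtn$ is the restriction of $\OP$, and uses $\CRFun\circ\mathrm{F}=\OP\circ\QRFun\co$ to get $\CMnd=\OP\circ\QMnd\co\circ\OP\co$; it then appeals to the recalled general facts about biadjoints and (lax idempotent) 2-monads under $(\blank)\co$ and an involution. Your extra remarks (why $\co$ keeps the side of the biadjoint but turns lax into oplax idempotency, and why pseudoalgebras transport under $\co$) just make explicit what the paper packs into those recalled facts.
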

\begin{proof}
The first claim follows as  $\CRDtn$ and $\QRDtn$ are 2-dually isomorphic (\cref{prop:q-c-duality-2-cat}), 
the 2-functor $\fun{\QRFun}{\QRDtn}{\RDtn}$  has a left bi-adjoint $\fun{\RQFun}{\RDtn}{\QRDtn}$ (\cref{thm:q-icompletion}), and 
$\fun{\OP}{\RDtn\co}{\RDtn}$ is an involution (\cref{prop:invo}).
Indeed, if $\fun{\mathrm{F}}{\QRDtn\co}{\CRDtn}$ is the isomorphism given by \cref{prop:q-c-duality-2-cat}, 
we have $\RCFun = \mathrm{F}\circ \RQFun\co \circ \OP\co$. 
The other claims follow by noticing that, since $\CRFun\circ\mathrm{F} = \OP\circ\QRFun\co$, we have 
$\CMnd = \CRFun\circ\RCFun = \CRFun\circ\mathrm{F}\circ\RQFun\co\circ\OP\co = \OP \circ \QRFun\co\circ\RQFun\co\circ\OP\co  = \OP\circ\QMnd\co\circ\OP\co$,
again using \cref{thm:q-icompletion,prop:q-c-duality-2-cat}. 
\end{proof}

\cref{prop:c-icompletion} ensures that comprehension is an algebraic concept which is property-like in the sense that every relational doctrine can have at most one algebra structure giving to it comprehension. 
Moreover, this algebra structure is given by a right adjoint in $\RDtn$ to the unit of the 2-monad. 
Note that this is dual to quotients, where the algebra structure is given by a left adjoint.

\begin{proposition} \label[prop]{prop:e-c-reflection}
The 2-category $\ECRDtn$ is a reflective 2-subcategory of $\CRDtn$. 
\end{proposition}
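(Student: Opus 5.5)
The plan is to transport \cref{thm:e-q-reflection} along the duality, in the same way \cref{prop:c-icompletion} was obtained from \cref{thm:q-icompletion}.

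Let $\fun{\mathrm{F}}{\QRDtn\co}{\CRDtn}$ be the 2-isomorphism of \cref{prop:q-c-duality-2-cat}, obtained by restricting $\OP$. We first check that $\mathrm{F}$ restricts to the 2-isomorphism $\EQRDtn\co\cong\ECRDtn$ of \cref{prop:e-q-c-duality-2-cat}. This holds because extensionality is self-dual: identities and graphs in $\RDoc$ and $\RDoc\op$ coincide. Consequently the square formed by the two full inclusions $\fun{J_q}{\EQRDtn}{\QRDtn}$ and $\fun{J_c}{\ECRDtn}{\CRDtn}$ commutes up to equality:
\[
J_c\circ\mathrm{F}|_{\EQRDtn\co} = \mathrm{F}\circ J_q\co .
\]

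By \cref{thm:e-q-reflection}, $J_q$ has a left biadjoint $\fun{L_q}{\QRDtn}{\EQRDtn}$ with invertible counit; concretely, $L_q$ is the extensional collapse. We then apply the general fact recalled before \cref{prop:c-icompletion}: if $\mathrm{R}$ has a left biadjoint $\mathrm{L}$, then $\mathrm{R}\co$ has left biadjoint $\mathrm{L}\co$. So $J_q\co$ has left biadjoint $L_q\co$, and the counit stays invertible, since $(\blank)\co$ only reverses 2-cells and preserves invertibility of 1-cells up to 2-isomorphism. We set
\[
L_c=\mathrm{F}\circ L_q\co\circ \mathrm{F}^{-1}.
\]
Conjugating the biadjunction $L_q\co\dashv J_q\co$ by the 2-isomorphisms $\mathrm{F}$ and its restriction, and using the commuting square above, gives $L_c\dashv J_c$ with invertible counit. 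Hence $\ECRDtn$ is reflective in $\CRDtn$.

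The one point needing care is the ``co'' step. Reflectivity is stated with respect to left biadjoints, whereas dualising a 2-category with $\op$ on 1-cells would swap left and right adjoints. Here we only apply $\co$, which reverses 2-cells but preserves the direction of adjunctions, and that is exactly what the recalled fact provides. So no adjoint flips, and the left biadjoint is preserved.

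As a sanity check, we expect $L_c$ to be the extensional collapse itself, because the collapse commutes with $\OP$: arrows are identified when their graphs coincide, and graphs coincide in $\RDoc$ and $\RDoc\op$. Concretely, the extensional collapse of a doctrine with comprehensions should preserve comprehensions, which is the dual of the remark preceding \cref{thm:e-q-reflection} that the collapse preserves quotients.
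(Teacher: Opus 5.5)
Your proposal is correct and follows essentially the same route as the paper: reflectivity of $\EQRDtn$ in $\QRDtn$ passes to the $(\blank)\co$ versions, and is then transported along the 2-isomorphisms of \cref{prop:q-c-duality-2-cat,prop:e-q-c-duality-2-cat}. The paper leaves two points implicit that you make explicit, namely the strictly commuting square of inclusions and the fact that $(\blank)\co$ preserves left biadjoints.
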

\begin{proof}
By from \cref{thm:e-q-reflection}, we know that $\EQRDtn$ is a reflective 2-subcategory of $\QRDtn$, hence 
$\EQRDtn\co$ is a reflective 2-subcategory of $\QRDtn\co$. 
Then, the thesis follows by 
\cref{prop:q-c-duality-2-cat,prop:e-q-c-duality-2-cat}. 
\end{proof}

Since the reflector of the inclusion of $\QRDtn$ into $\EQRDtn$ is the restriction of the extensional collapse to relational doctrines with quotients, 
the reflector given by \cref{prop:e-c-reflection} is obtained in the same way. 
Hence, in particular, this shows that the extensional collapse preserves comprehensions. 
Finally, by composing \cref{prop:c-icompletion,prop:e-c-reflection}, we get a left biadjoint of the inclusion of $\ECRDtn$ into $\RDtn$. 

In analogy with quotient completions, 
we call the  construction from $\RDtn$ into $\CRDtn$  the \emph{intensional comprehension completion} and
the one from $\RDtn$ into $\ECRDtn$ the \emph{extensional comprehension completion}. 

Note that the two equalities

\[\CMnd   = \OP\circ\QMnd\co\circ\OP\co
\quad
\quad
\QMnd   = \OP\circ\CMnd\co\circ\OP\co
\]

the first of which appears in the proof of \cref{prop:c-icompletion} and the other is its dual analogue, when evaluated on a relational doctrine $\RDoc$, give
\begin{equation}\label{eq:contresempio}
\CR\RDoc = (\QR{\RDoc\op})\op
\quad
\quad
\QR\RDoc = (\CR{\RDoc\op})\op
\end{equation}

That is, the intensional comprehension completion of a relational doctrine $\RDoc$ is the opposite of the intensional quotients completion of the opposite of $\RDoc$, and viceversa. Similar equations hold also for the extensional completions. 

We now give an explicit description of these constructions, starting from the intensional one. 
Given a relational doctrine $\fun{\RDoc}{\bop\CC}{\Pos}$, 
the intensional comprehension completion produces another relational doctrine $\fun{\CR\RDoc}{\bop{\CCt{\RDoc}}}{\Pos}$ defined as follows. 
\begin{itemize}
\item An object of $\CCt\RDoc$ is a pair \ple{X,\preda} of an object $X$ in \CC and an $\RDoc$-predicate $\preda$ on it. 
\item An arrow \fun{f}{\ple{X,\preda}}{\ple{Y,\predb}} in $\CCt\RDoc$ is an arrow \fun{f}{X}{Y} in \CC such that $\preda  \order \gr{f}\rcomp\predb\rcomp\gr{f}\rconv$. 
\item The action of $\CR\RDoc$ on objects is given by 
\[
\CR\RDoc(\ple{X,\preda},\ple{Y,\predb}) = \{ \relr\in\RDoc(X,Y) \mid \relr\order\preda\rcomp\relr\rcomp\predb \}
\]
and the action on arrows \fun{f}{\ple{X,\preda}}{\ple{Y,\predb}} and \fun{g}{\ple{X',\preda'}}{\ple{Y',\predb'}} is given by 
\[
\CR\RDoc\reidx{f,g}(\relr) = \preda\rcomp\RDoc\reidx{f,g}(\relr)\rcomp\preda' 
\]
\item The relational operations of $\CR\RDoc$ are the same as in $\RDoc$ except for the identity which is given by $\rid_{\ple{X,\relr}} = \relr$.
\end{itemize} 
Again in analogy with quotients, 
the elements of the fibre $\CR\RDoc(\ple{X,\preda},\ple{Y,\predb})$ are called \emph{codescent data} for $\preda$ and $\predb$. 

The relational doctrine $\CR\RDoc$ has comprehensions: 
an $\CR\RDoc$-predicate on $\ple{X,\preda}$ is  an $\RDoc$-predicate $\predb$ on $X$ such that $\predb \order \preda$ and 
its full injective comprehension arrow is \fun{\id_X}{\ple{X,\predb}}{\ple{X,\preda}}. 

\begin{example} 
The intensional comprehension completion of a relational doctrine of the form $\PtoR\PDoc$ (as in \cref{ex:rel-doc:eed}) is precisely the comprehension completion of $\PDoc$ described in \cite{MaiettiME:quofcm, MaiettiME:eleqc}.
\end{example}

The extensional quotient completion takes a relational doctrine 
\fun\RDoc{\bop\CC}{\Pos} and return  a new one 
\fun{\ECR\RDoc}{\bop{\ECCt\RDoc}}{\Pos} obtained by applying the extensional collapse to the intensional comprehension completion. 
More explicitly, $\ECR\RDoc$ is essentially the same as $\CR\RDoc$ except for the arrows in the base category. 
Indeed, an arrow in $\ECCt\RDoc$ is an equivalence class of arrows in $\CCt\RDoc$, where 
two arrows \fun{f,g}{\ple{X,\preda}}{\ple{Y,\predb}} are considered equivalent if 
$\preda \order \gr{f}\rcomp\predb\rcomp\gr{g}\rconv$ holds in $\RDoc(X,X)$. 

\begin{example} 
As for the intensional case, the extensional comprehension completion of a relational doctrine of the form $\PtoR\PDoc$ where $\fun{\PDoc}{\CC\op}{\Pos}$ is an elementary and existential doctrine and is precisely the doctrine of predicates as in Theorem 2.7 of \cite{xyz}.
\end{example}

\subsection{Injective objects}\label{ssect:injective}

Relational doctrines obtained via the extensional quotient completion can be characterized using projective objects (see \cite{DagninoP26jpaa}). Here, we dualize this result, identifying those obtained via the extensional comprehension completion using injective objects. Finally, we establish a characterization of the intensional comprehension completion and, by duality, obtain the analogue for the intensional quotient completion.

Let \fun\RDoc{\bop\CC}{\Pos} be a relational doctrine. 
Recall from \cite{DagninoP26jpaa} that an $\RDoc$-projective object is an object in the base of $\RDoc$ that is projective with respect to the class of $\RDoc$-quotient arrows.  
A full subcategory $\ct{G}$ of $\CC$ is an $\RDoc$-projective cover if it contains only $\RDoc$-projective objects and 
every object of $\CC$ can be covered by an object of $\ct{G}$ using an $\RDoc$-quotient arrow. 

We say that an object is \emph{$\RDoc$-injective} if it is injective with respect to the class of $\RDoc$-comprehension arrows. 
More precisely,  an object $I$ of $\CC$ is $\RDoc$-injective if for every diagram of the form
\begin{equation}\label{diag:inj}
\xymatrix{
X\ar[r]^-{f}\ar[d]_{k}&I\\
A&
}
\end{equation} 
if $k$ is a $\RDoc$-comprehension arrow, then there is $\fun{h}{A}{I}$ such that $h\circ k= f$.
A full subcategory $\ct{H}$ of $\CC$ is an \emph{$\RDoc$-injective hull} if it contains only $\RDoc$-injective objects and, 
for every object $X$ of $\CC$, there is an $\RDoc$-comprehension arrow $\fun{m}{X}{I}$ where $I$ is in $\ct{H}$. 
 The relational doctrine $\RDoc$  has \emph{enough $\RDoc$-injectives} if it has an injective hull.

The following proposition states the duality between projectivity and injectivity in the context of relational doctrines. 
It is an immediate consequence of \cref{prop:quot-to-comp}.

\begin{proposition}\label{prop:inj-proj-dual} 
Let \fun\RDoc{\bop\CC}{\Pos} be a relational doctrine. 
An object $X$ in \CC is $\RDoc$-projective if and only if it is $\RDoc\op$-injective. 
A full subcategory $\ct{G}$ of $\CC$ is an $\RDoc$-projective cover if and only if $\ct{G}\op$ is an $\RDoc\op$-injective hull. 
\end{proposition}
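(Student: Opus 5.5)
The plan is to transport both definitions across the involution $\OP$ and let \cref{prop:quot-to-comp} do the work. The key fact is that \cref{prop:quot-to-comp}, read arrow by arrow, shows the following: an arrow $\fun{u}{X}{Y}$ of $\CC$ is an $\RDoc$-quotient arrow for some $\RDoc$-equivalence relation exactly when, viewed as an arrow $\fun{u}{Y}{X}$ of $\CC\op$, it is an $\RDoc\op$-comprehension arrow for some $\RDoc\op$-predicate. Indeed, $\RDoc$-equivalence relations on $X$ and $\RDoc\op$-predicates on $X$ are literally the same elements, namely $\relr\in\RDoc(X,X)=\RDoc\op(X,X)\op$. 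So the class of $\RDoc$-quotient arrows in $\CC$ is, as a class of morphisms, the class of $\RDoc\op$-comprehension arrows in $\CC\op$.

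First I must check that the notions match in strength. The paper's $\RDoc$-projective objects are taken relative to $\RDoc$-quotient arrows in the sense of \cite{DagninoP26jpaa}, while $\RDoc$-injectivity uses ``$\RDoc$-comprehension arrows''. I would take the correspondence to hold at the same level (full, injective, strong on one side; effective, surjective on the other). Using \cref{prop:quot-to-comp} for effective versus full, and \cref{prop:inj-to-surj} for surjective versus injective on the graph of the arrow, the classes coincide in either reading. This bookkeeping is the only delicate step. If projectivity in \cite{DagninoP26jpaa} is stated for plain quotient arrows, the first part of \cref{prop:quot-to-comp} alone suffices.

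For the first claim, I dualize the lifting diagram. $X$ is $\RDoc$-projective if for every quotient arrow $\fun{q}{A}{B}$ and every $\fun{f}{X}{B}$ there is $\fun{h}{X}{A}$ with $q\circ h=f$. Reading this in $\CC\op$ gives arrows $\fun{q}{B}{A}$, $\fun{f}{B}{X}$ and $\fun{h}{A}{X}$ with $h\circ q=f$, where $q$ ranges over $\RDoc\op$-comprehension arrows. This is exactly diagram~\eqref{diag:inj} with $I=X$ for the doctrine $\RDoc\op$, so $X$ is $\RDoc\op$-injective. The argument is symmetric, so the converse holds as well.

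For the second claim, the objects of $\ct{G}$ and $\ct{G}\op$ coincide, and by the first claim $\ct{G}$ consists of $\RDoc$-projectives iff $\ct{G}\op$ consists of $\RDoc\op$-injectives. For covering, a quotient arrow $\fun{q}{G}{X}$ with $G$ in $\ct{G}$ is the same as an $\RDoc\op$-comprehension arrow $\fun{q}{X}{G}$ in $\CC\op$ with $G$ in $\ct{G}\op$. Hence every object is covered in the first sense iff every object admits such an arrow in the second sense, which concludes the proof.
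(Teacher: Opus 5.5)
Your proposal is correct and follows the same route as the paper, which only remarks that the statement is an immediate consequence of \cref{prop:quot-to-comp}. You unfold exactly that: you identify $\RDoc$-quotient arrows in $\CC$ with $\RDoc\op$-comprehension arrows in $\CC\op$ and then dualize the lifting and covering conditions, and your extra bookkeeping matching effective/surjective with full/injective via \cref{prop:inj-to-surj} is sound and simply makes explicit what the paper leaves implicit.
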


Denote by $I_\ct{H}$ the full inclusion of $\ct{H}$ into $\CC$ and by $I_\ct{H}^\star\RDoc$ the change of base of $\RDoc$ along $I_\ct{H}$ (\ie the relational doctrine based on $\ct{H}$ obtained by the composition of $\RDoc$ with $\bop{I_\ct{H}}$ as in \cref{ex:rel-doc:cob}).

\begin{theorem}\label{prop:proj-obj}
Let \fun\RDoc{\bop\CC}{\Pos}  be an extensional relational doctrine with comprehensions and \ct{H} a full subcategory of \CC. 
Then, \ct{H} is an $\RDoc$-injective hull 
if and only if 
$\RDoc$ is equivalent to $\ECR{I_\ct{H}^\star\RDoc}$ in $\ECRDtn$.
\end{theorem}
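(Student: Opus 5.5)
The plan is to obtain the statement by dualizing the corresponding characterization of the extensional quotient completion via projective covers from \cite{DagninoP26jpaa}. That result states: for an extensional relational doctrine $\SDoc$ with quotients and a full subcategory $\ct{G}$ of its base, $\ct{G}$ is an $\SDoc$-projective cover if and only if $\SDoc$ is equivalent to $\EQR{I_\ct{G}^\star\SDoc}$ in $\EQRDtn$. We will apply it to $\SDoc = \RDoc\op$ and $\ct{G} = \ct{H}\op$.

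First, $\RDoc$ is extensional, and extensionality is self-dual, so $\RDoc\op$ is extensional. By \cref{prop:q-c-duality}, since $\RDoc$ has comprehensions, $\RDoc\op$ has quotients. So $\RDoc\op$ is an object of $\EQRDtn$. By \cref{prop:inj-proj-dual}, applied to $\RDoc\op$ and using $(\RDoc\op)\op=\RDoc$, the subcategory $\ct{H}$ is an $\RDoc$-injective hull if and only if $\ct{H}\op$ is an $\RDoc\op$-projective cover. By the quotient result, this holds if and only if $\RDoc\op \simeq \EQR{I_{\ct{H}\op}^\star(\RDoc\op)}$ in $\EQRDtn$.

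Next we transport this equivalence along the 2-dual isomorphism of \cref{prop:e-q-c-duality-2-cat}, which is the restriction of $\OP$. A 2-functor sends equivalences to equivalences, and an equivalence in $\Ct{K}\co$ is the same as one in $\Ct{K}$ (invertible 2-cells are unaffected by reversing 2-cells). Hence the condition is equivalent to $\RDoc \simeq (\EQR{I_{\ct{H}\op}^\star(\RDoc\op)})\op$ in $\ECRDtn$.

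It remains to identify $(\EQR{I_{\ct{H}\op}^\star(\RDoc\op)})\op$ with $\ECR{I_\ct{H}^\star\RDoc}$. This needs two observations.
\begin{itemize}
\item Change of base commutes with opposites: $I_{\ct{H}\op}^\star(\RDoc\op) = (I_\ct{H}^\star\RDoc)\op$. Here $I_{\ct{H}\op} = (I_\ct{H})\op$, and since $\ct{H}$ is full, the left adjoints $\Ex$ computed in $I_\ct{H}^\star\RDoc$ coincide with those computed in $\RDoc$, because both are given by composing with graphs.
\item The extensional analogue of \eqref{eq:contresempio} holds: $\ECR{\SDoc} = (\EQR{\SDoc\op})\op$. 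This follows, as remarked after \eqref{eq:contresempio}, by composing the intensional identity with the self-duality of the extensional collapse.
\end{itemize}
Applying the second observation to $\SDoc = I_\ct{H}^\star\RDoc$ completes the chain of equivalences.

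The main obstacle is this last identification. The identity $\CMnd = \OP\circ\QMnd\co\circ\OP\co$ is only stated up to the choices made in the biadjunctions, so the equalities should be read up to equivalence, which is all we need. If a direct check is preferred, we can compare the explicit descriptions given above. On objects, $\ple{X,\preda}$ with $\preda$ a predicate matches $\ple{X,\eqrelr}$ with $\eqrelr$ an $\RDoc\op$-equivalence relation. On arrows, the condition $\preda \order \gr{f}\rcomp\predb\rcomp\gr{f}\rconv$ is the reversed-order form of the descent condition in $\RDoc\op$. The fibres of codescent data match descent data, and the extensional identification of arrows dualizes in the same way.
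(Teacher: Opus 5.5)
Your proposal is correct and follows the paper's own proof. The paper simply invokes \cref{prop:inj-proj-dual} and \cref{prop:q-c-duality} to read the statement as the dual of Corollary 3.8 of \cite{DagninoP26jpaa}; you additionally spell out the transport along $\OP$ and the identification $\ECR{I_\ct{H}^\star\RDoc} = (\EQR{I_{\ct{H}\op}^\star(\RDoc\op)})\op$, which the paper leaves implicit.
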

\begin{proof} 
After \cref{prop:inj-proj-dual} and \cref{prop:q-c-duality}, this is the dual analogue of Corollary 3.8 of \cite{DagninoP26jpaa}.
\end{proof}

\begin{example}\label[ex]{ex:hulls:span} 
Suppose \CC is a regular category. An object of \CC is $\jmSpan\CC$-injective if and only if it is injective in the usual sense, \ie with respect to the class of monomorphisms. So $\jmSpan\CC$ is an extensional comprehension completion if and only if \CC has enough injectives. In particular, the relational doctrine of jointly monic spans over any topos (where the power objects are injectives)  or over any Boolean pretopos (where the objects of the form $X+1$ are injectives) are always  extensional comprehension completions. 
\end{example}

\begin{example}\label[ex]{ex:hulls:fs} 
Suppose \CC has finite limits and $(\ct{E},\ct{M})$ is an orthogonal factorisation system on it, which is also proper, in the sense that the arrows in $\ct{M}$ are monic, and stable in the sense that the pullback of an arrow in $\ct{E}$ is again in $\ct{E}$ \cite{FREYD1972169}.  It determines an elementary existential doctrine over \fun{\Sub_\ct{M}}{\CC\op}{\Pos} taking those subobjects that are represented by arrows in $\ct{M}$ \cite{HughesJacobs2003}. Thus  $\PtoR{\Sub_\ct{M}}$ is a relational doctrine as in \cref{ex:rel-doc:eed}. This relational doctrine has comprehension and if $(\ct{E},\ct{M})$ is such that diagonals are in $\ct{M}$, then  it is also extensional.  The $\PtoR{\Sub_\ct{M}}$-injective objects are those objects of \CC that are injective with respect to the arrows in $\ct{M}$.  Thus $\PtoR{\Sub_\ct{M}}$ arises as an extensioanl comprehension completion if and only if for every $X$ in \CC there is an arrow $\fun{i}{X}{Y}$ of $\ct{M}$ such that $Y$ is injective with respect to $\ct{M}$. A rich list of factorisation systems along with their corresponding injective hulls is presented in \cite{AdamekHerrlichStrecker2009}. 
\end{example}

Sometimes the $\RDoc$-injective hull $\ct{H}$ of \CC is itself is a known category and the restriction of $\RDoc$ to it give a relevant example. Other times, using some know duality, the category $\CC\op$ is of interest as well, and then it arises as a quotient completion (as $\ct{H}\op$ is an $\RDoc\op$-projective over of $\CC\op$).
The following two examples, that are special cases of \cref{ex:hulls:fs} for a specific choice of the factorisation system, exemplify these two situations.

\begin{example}\label[ex]{ex:hulls:top0} 
 Let $\ct{Top}_0$ be the category of $T_0$ topological spaces and continuous functions and consider the factorization system of surjections and subspace inclusions. The doctrine associated to it can be equivalently described as the change of base $U_0^\star\Rel$ of $\fun{\Rel}{\bop\Set}{\Pos}$ along the forgetful functor $\fun{U_0}{\ct{Top}_0}{\Set}$. 
Every $T_0$ space can be seen as subspace of an appropriate  power of the Sierpinski space $\Sigma$.   
These powers of $\Sigma$, together with their retracts, are exactly the $U_0^\star\Rel$-injective objects and thus form  an $U_0^\star\Rel$-injective hull. 
The full subcategory of $\ct{Top}_0$ on these objects  turns out to be equivalent to 
the category of continuous lattices and Scott continuous functions $\ct{Ctn}$ \cite{Gierz2003}. 
Therefore, if $\fun{U_c}{\ct{Ctn}}{\Set}$ is the forgetful functor, then $U_0^\star\Rel$ is the extensional comprehension completion of $U_c^\star\Rel$.

\end{example}

\begin{example}\label[ex]{ex:hulls:inflat} 
Let  $\ct{Lat}_\wedge$ be the category of meet-semilattices and their homomorphisms and consider the factorisation system of surjections and inclusions of meet-semilattices. Similarly to \cref{ex:hulls:top0},  the associated relational doctrine  is $U^\star\Rel$ where $\fun{U}{\ct{Lat}_\wedge}{\Set}$ is the forgetful functor. 
The $U^\star\Rel$-injective objects are the frames and every meet-semilattice can be seen as a sub-meet-semilattice of a frame \cite{BrunsLakser1970}. 
Since   $\ct{Lat}_\wedge$ is dually equivalent to $\ct{ALt}$ (the category of algebraic lattices and functions preserving arbitarry meets and directed joins), 
we get that $\ct{ALat}$ is the base category of the extensional quotient completion of $U^\star\Rel\op$.
\end{example}

In order to provide a characterization of the intensional comprehension completion, 
we introduce \emph{strongly $\RDoc$-injective} objects. 
These are $\RDoc$-injective objects $I$ where, in a diagram like \ref{diag:inj}, 
the arrow $\fun{h}{A}{I}$ is the unique one making it commute. 
Accordingly, a \emph{strong $\RDoc$-injective hull} is an $\RDoc$-injective hull $\ct{H}$ where all objects are strongly $\RDoc$-injective. 
Finally, we say that $\RDoc$ has \emph{enough strong injectives} if it has a strong $\RDoc$-injective hull. 

To prove the characterization (\cref{thm:inj-obj-strong}), we need the following lemmas. 

\begin{lemma} \label[lem]{lem:fib-comp-iso} 
Let \fun\RDoc{\bop\CC}{\Pos} be a relational doctrine with comprehensions, 
If \fun{k}{X}{A} and \fun{k'}{X'}{A'} are $\RDoc$-comprehension arrows for the $\RDoc$-predicates $\preda$ on $A$ and $\preda'$ on $A'$, 
then $\RDoc(X,X')$ is isomorphic to $\CR\RDoc(\ple{A,\preda},\ple{A',\preda'})$. 
\end{lemma}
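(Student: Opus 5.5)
The plan is to write down two explicit monotone maps and check that they are mutually inverse. Fix comprehension arrows $\fun{k}{X}{A}$ for $\preda$ and $\fun{k'}{X'}{A'}$ for $\preda'$; by definition they are full, injective and strong, so
\[
\img_k=\gr{k}\rconv\rcomp\gr{k}=\preda,\qquad \img_{k'}=\preda',\qquad \gr{k}\rcomp\gr{k}\rconv=\rid_X,\qquad \gr{k'}\rcomp\gr{k'}\rconv=\rid_{X'}.
\]
The maps will be the restrictions of the left adjoint and of reindexing along $(k,k')$:
\[
\Ex\reidx{k,k'}(\relr)=\gr{k}\rconv\rcomp\relr\rcomp\gr{k'}
\qquad\text{and}\qquad
\RDoc\reidx{k,k'}(\rels)=\gr{k}\rcomp\rels\rcomp\gr{k'}\rconv .
\]
Both are monotone, being built from composition.

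First, I check that $\Ex\reidx{k,k'}$ lands in the codescent data. Set $\rels=\gr{k}\rconv\rcomp\relr\rcomp\gr{k'}$. Using $\rid_X=\gr{k}\rcomp\gr{k}\rconv$ and $\rid_{X'}=\gr{k'}\rcomp\gr{k'}\rconv$,
\[
\preda\rcomp\rels\rcomp\preda'=\gr{k}\rconv\rcomp(\gr{k}\rcomp\gr{k}\rconv)\rcomp\relr\rcomp(\gr{k'}\rcomp\gr{k'}\rconv)\rcomp\gr{k'}=\rels .
\]
So $\rels\order\preda\rcomp\rels\rcomp\preda'$ holds, indeed with equality.

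Next come the two round trips.
\begin{itemize}
\item Starting from $\relr\in\RDoc(X,X')$, the composite $\RDoc\reidx{k,k'}\Ex\reidx{k,k'}(\relr)$ is $\gr{k}\rcomp\gr{k}\rconv\rcomp\relr\rcomp\gr{k'}\rcomp\gr{k'}\rconv=\relr$, by injectivity of $k$ and $k'$.
\item Starting from a codescent datum $\rels$, the composite $\Ex\reidx{k,k'}\RDoc\reidx{k,k'}(\rels)$ is $\preda\rcomp\rels\rcomp\preda'$, by fullness. So I need $\preda\rcomp\rels\rcomp\preda'=\rels$.
\end{itemize}

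The second round trip is the main obstacle. The codescent condition gives $\rels\order\preda\rcomp\rels\rcomp\preda'$. For the reverse inequality I use coreflexivity, $\preda\order\rid_A$ and $\preda'\order\rid_{A'}$, together with monotonicity of composition: $\preda\rcomp\rels\rcomp\preda'\order\rid_A\rcomp\rels\rcomp\rid_{A'}=\rels$. Hence equality holds.

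The two monotone maps are therefore mutually inverse, which gives the isomorphism of posets $\RDoc(X,X')\cong\CR\RDoc(\ple{A,\preda},\ple{A',\preda'})$.

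Strongness (monicity) and the universal property of $k$ and $k'$ are not needed; only fullness and $\RDoc$-injectivity are used.
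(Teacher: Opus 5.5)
Your proof is correct and is essentially the paper's argument written out by hand. The paper regards $k\colon\ple{X,\rid_X}\to\ple{A,\preda}$ and $k'$ as $\CR\RDoc$-bijective arrows (injectivity plus fullness), so that $\CR\RDoc\reidx{k,k'}$, which coincides with your $\RDoc\reidx{k,k'}$, is an isomorphism onto $\CR\RDoc(\ple{X,\rid_X},\ple{X',\rid_{X'}})=\RDoc(X,X')$; your coreflexivity step in the second round trip is precisely what makes $\preda$ act as the identity on $\ple{A,\preda}$ in $\CR\RDoc$.
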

\begin{proof}
It follows by observing that 
$\RDoc(X,X') = \CR\RDoc(\ple{X,\rid_X},\ple{X',\rid_{X'}})$ and that 
$\fun{\CR\RDoc\reidx{k,k'}}{\CR\RDoc(\ple{A,\preda},\ple{A',\preda'})}{\CR\RDoc(\ple{X,\rid_X},\ple{X',\rid_{X'}})}$ is an isomorphism of posets  because 
$\fun{k}{\ple{X,\rid_X}}{\ple{A,\preda}}$ and $\fun{k'}{\ple{X',\rid_{X'}}}{\ple{A',\preda'}}$ are $\CR\RDoc$-bijective. 
\end{proof}

\begin{lemma}\label[lem]{lem:equiv-inj-hull}
Let $\oneAr{F}{\RDoc}{\SDoc}$ be an equivalence in $\CRDtn$. 
If $\ct{H}$ is a strong $\RDoc$-injective hull, then the image of $\ct{H}$ under $\fn{F}$ is a strong $\SDoc$-injective hull. 
\end{lemma}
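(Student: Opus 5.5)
The plan is to transport every ingredient along the equivalence $F$ with a pseudo-inverse. Since $F$ is an equivalence in $\CRDtn$, there are a 1-arrow $\oneAr{G}{\SDoc}{\RDoc}$ in $\CRDtn$ and invertible 2-arrows $GF\cong\Id$ and $FG\cong\Id$. I will write $\eta_X\colon X\to\fn{G}\fn{F}X$ and $\varepsilon_Y\colon\fn{F}\fn{G}Y\to Y$ for their components, which are isomorphisms in the bases. In particular $\fn{F}$ is an equivalence of categories. Moreover, the component $\lift{F}_{X,Y}\colon\RDoc(X,Y)\to\SDoc(\fn{F}X,\fn{F}Y)$ is an order isomorphism, up to reindexing along these isomorphisms. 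I take the image of $\ct{H}$ to be the full subcategory of $\D$ on objects isomorphic to some $\fn{F}H$ with $H$ in $\ct{H}$. Isomorphism-closure is harmless, since the properties involved are invariant under isomorphism: a comprehension arrow composed with isomorphisms on either side is again a comprehension arrow, and injectivity is transported along isomorphisms.

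First I check the covering condition. Given an object $Y$ of $\D$, pick an $\RDoc$-comprehension arrow $\fun{m}{\fn{G}Y}{I}$ with $I$ in $\ct{H}$. Since $F$ preserves comprehensions, $\fn{F}m$ is an $\SDoc$-comprehension arrow. Precomposing with $\varepsilon_Y^{-1}$ gives an $\SDoc$-comprehension arrow $Y\to\fn{F}I$; fullness, injectivity and monicity are all preserved by precomposition with an isomorphism.

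Next I check strong injectivity of $\fn{F}I$ for $I$ in $\ct{H}$. Take an $\SDoc$-comprehension arrow $\fun{k}{Y}{B}$ and an arrow $\fun{f}{Y}{\fn{F}I}$. Apply $G$: $\fn{G}k$ is an $\RDoc$-comprehension arrow, because $G$ lies in $\CRDtn$. Compose $\fn{G}f$ with $\eta_I^{-1}\colon\fn{G}\fn{F}I\to I$. Strong injectivity of $I$ then yields a unique $\fun{h}{\fn{G}B}{I}$ with $h\circ\fn{G}k=\eta_I^{-1}\circ\fn{G}f$. Set $h'=\fn{F}(\eta_I\circ h)$, suitably conjugated by $\varepsilon$; naturality of $\varepsilon$ gives $h'\circ k=f$. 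For uniqueness, suppose $h'_1\circ k=h'_2\circ k=f$. Applying $\fn{G}$ and transporting along $\eta_I^{-1}$ gives two factorizations through $\fn{G}k$, so these agree by uniqueness in $\RDoc$. Faithfulness of $\fn{G}$, which holds because it is an equivalence of categories, then gives $h'_1=h'_2$.

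The main obstacle is the bookkeeping showing that $\fn{G}k$, and not merely some isomorph of it, is a comprehension arrow for $\lift{G}(\preda)$, and that the relevant predicates match up. Here I rely directly on the definition of 1-arrows in $\CRDtn$ preserving comprehension arrows; the remaining care is only in the naturality squares for $\eta$ and $\varepsilon$. \cref{lem:fib-comp-iso} is not needed at this stage.
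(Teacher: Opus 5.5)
Your proof is correct and follows the paper's argument. The covering condition is obtained exactly as in the paper: apply $\fn{F}$ to an $\RDoc$-comprehension arrow into $\ct{H}$, then precompose with an isomorphism. Your transposition of the extension problem through the pseudo-inverse $G$, which preserves comprehension arrows, is the unpacked form of the fact the paper cites, namely that $F$, being a right adjoint in $\CRDtn$, preserves injective objects (dual of Proposition 3.5 of \cite{DagninoP26jpaa}). Your uniqueness check via faithfulness of $\fn{G}$ supplies the step the paper calls immediate.
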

\begin{proof}
Since $F$ is an equivalence, it is a right adjoint in $\CRDtn$ and so $\fn{F}$ maps an $\RDoc$-injective objects to an $\SDoc$-injective objects (this is the dual analogous of Proposition 3.5 of \cite{DagninoP26jpaa}). It is also immediate to check that if the object is strong $\RDoc$-injective then its image under $\fn{F}$ is strong $\SDoc$-injective as well.
If $X$ is an object in the base of $\SDoc$, we know that $X\cong\fn{F}Y$ for some object $Y$ in the base of $\RDoc$. 
Because $\ct{H}$ is a strong $\RDoc$-injective hull, 
we have an $\RDoc$-comprehension arrow $\fun{k}{Y}{A}$ where $A$ is an object of $\ct{H}$, thus $\fun{\fn{F}k}{\fn{F}Y}{\fn{F}A}$ is an $\SDoc$-comprehension arrow and precomposing it with the isomorphism $X\cong\fn{F}Y$ we get a $\SDoc$-comprehension arrow $\fun{k'}{X}{\fn{F}A}$ as needed. 
\end{proof}

\begin{theorem}\label[thm]{thm:inj-obj-strong}
Let \fun\RDoc{\bop\CC}{\Pos}  be a relational doctrine with comprehensions. 
Then, $\RDoc$ has enough strong injectives 
if and only if 
$\RDoc$ is equivalent to $\CR\SDoc$ in $\CRDtn$, for some relational doctrine $\SDoc$. 
\end{theorem}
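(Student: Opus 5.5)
We prove the two implications separately; the harder one is recognising $\RDoc$ as a completion.

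\emph{($\Leftarrow$)} By \cref{lem:equiv-inj-hull} it suffices to show that $\CR\SDoc$ has a strong injective hull. For $\SDoc$ based on $\D$, take the full subcategory $\ct{H}$ of $\CCt\SDoc$ on the objects $\ple{Y,\rid_Y}$. Every object $\ple{Y,\predb}$ has the comprehension arrow $\fun{\id_Y}{\ple{Y,\predb}}{\ple{Y,\rid_Y}}$, which is the full injective comprehension of $\predb$ described after \cref{prop:c-icompletion}. This gives the covering part of the definition of a hull.

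For strong injectivity, take a $\CR\SDoc$-comprehension arrow $\fun{k}{\ple{X,\preda}}{\ple{A,\predc}}$ and an arrow $\fun{f}{\ple{X,\preda}}{\ple{Y,\rid_Y}}$. Up to isomorphism, $k$ is $\id_A$ from $\ple{A,\predb}$, where $\predb\order\predc$ is the predicate it comprehends.
\begin{itemize}
\item Existence: an arrow out of $\ple{A,\predb}$ into $\ple{Y,\rid_Y}$ is an arrow $f$ of $\D$ with $\predb\order\gr f\rcomp\gr f\rconv$. This always holds, since $\predb\order\rid_A\order\ker_f$. So $f$ itself is an arrow $\ple{A,\predc}\to\ple{Y,\rid_Y}$, giving the extension $h=f$.
\item Uniqueness: any extension $h$ satisfies $h\circ\id_A=f$ in $\D$, so $h=f$.
\end{itemize}

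\emph{($\Rightarrow$)} Let $\ct{H}$ be a strong injective hull and set $\SDoc=I_\ct{H}^\star\RDoc$. We build a 1-arrow $\oneAr{K}{\CR\SDoc}{\RDoc}$ in $\CRDtn$ and show it is an equivalence.

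On objects, send $\ple{I,\predb}$ to the domain of a chosen comprehension arrow $k_\predb$ of $\predb$. On arrows, send $\fun{f}{\ple{I,\predb}}{\ple{J,\predc}}$ to the unique map induced by comprehension of $\predc$. This requires $\img_{f\circ k_\predb}\order\predc$, which follows from $\predb\order\gr f\rcomp\predc\rcomp\gr f\rconv$ together with fullness of $k_\predb$. On fibres, use the isomorphism of \cref{lem:fib-comp-iso}, i.e.\ reindexing along $k,k'$. Preservation of composition, identities and converse then follows because the comprehension arrows are $\RDoc$-injective. Fullness of comprehensions makes the $k$'s $\CR\RDoc$-bijective, so the reindexing isomorphisms commute with the relational operations.

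It remains to check that $K$ is an equivalence.
\begin{itemize}
\item Local isomorphism: this is \cref{lem:fib-comp-iso}.
\item Essential surjectivity: every $X$ has a comprehension arrow $\fun{m}{X}{I}$ with $I\in\ct{H}$. Comprehension arrows are monic, hence $X$ is isomorphic to the comprehension of $\img_m$.
\item Full faithfulness of $\fn K$: this is the main obstacle. Given $\fun{g}{KX}{KY}$ between comprehensions $\fun{k}{KX}{I}$ and $\fun{k'}{KY}{J}$, strong injectivity of $J$ extends $k'\circ g$ along $k$ to a unique $\fun{h}{I}{J}$. One must show that $h$ is an arrow of $\CCt\SDoc$, using fullness and injectivity of $k$ to transfer $\img_{k'\circ g}\order\predc$ into $\predb\order\gr h\rcomp\predc\rcomp\gr h\rconv$. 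Faithfulness follows from the uniqueness clause of strong injectivity.
\end{itemize}

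Finally, $K$ preserves comprehensions by construction. Since it is an equivalence of categories with fibrewise isomorphisms, it yields an equivalence in $\CRDtn$.
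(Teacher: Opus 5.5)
Your proposal is correct and follows the paper's proof essentially step for step. For ($\Leftarrow$) you use the same hull of objects $\ple{Y,\rid_Y}$ together with \cref{lem:equiv-inj-hull}; for ($\Rightarrow$) you use the same functor built from chosen comprehension arrows, with fibres given by \cref{lem:fib-comp-iso}, essential surjectivity from the hull and full faithfulness from strong injectivity. The step you flag as the main obstacle is only the one-line computation $\gr{h}\rconv\rcomp\predb\rcomp\gr{h}=\img_{h\circ k}=\img_{k'\circ g}\order\predc$, which needs fullness of $k$ but not its injectivity.
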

\begin{proof}
Towards a proof of the left-to-right implication, 
let $\ct{H}$ be a strong $\RDoc$-injective hull, write $I_\ct{H}$ for the full inclusion of $\ct{H}$ into $\CC$ and 
$I_\ct{H}^\star\RDoc$ for the change-of-base of $\RDoc$ along $I_\ct{H}$. We prove that $\RDoc$ and $\CR{I_\ct{H}^\star\RDoc}$ are equivalent in $\CRDtn$. 
For every object $\ple{A,\preda}$ in $\CCt{I_\ct{H}^\star\RDoc}$, pick  an $\RDoc$-comprehension arrow $\fun{k_\preda}{A_\preda}{A}$ in $\CC$. 
Then, for every arrow $\fun{f}{(A,\preda)}{(B,\predb)}$ in $\CCt{I_\ct{H}^\star\RDoc}$, consider the following commutative diagram in $\CC$ 
\[
\xymatrix{
  A_\preda 
  \ar[d]_{k_\preda} \ar[r]^-{h_f}
& B_\predb
  \ar[d]^-{k_\predb}\\
  A \ar[r]_-{f} 
& B 
}
\] 
where $h_f$ is uniquely determined by the universal property of $k_\predb$ because 
\begin{align*}
\img_{f\circ k_\preda} 
  &= \gr{f}\rconv\rcomp\gr{k_\preda}\rconv\rcomp\gr{k_\preda}\rcomp\gr{f} 
   = \gr{f}\rconv\rcomp\preda\rcomp\gr{f} 
   \order \predb 
\end{align*}
It is also easy to check that, if $f$ is an $I_\ct{H}^\star\RDoc$-comprehension arrow, then $h_f$ is an $\RDoc$-comprehension arrow for 
$\gr{k_\preda}\rcomp\gr{f}\rconv\rcomp\predb\rcomp\gr{f}\rcomp\gr{k_\preda}\rconv$. 

The assignment mapping $f$ to $h_f$ determines a functor \fun{\fn{F}}{\CCt{I_\ct{H}^\star\RDoc}}{\CC}, 
which is part of a 1-arrow $\oneAr{F}{I_\ct{H}^\star\RDoc}{\RDoc}$ in $\CRDtn$, 
where $\lift{F}$ is a family of isomorphisms given by \cref{lem:fib-comp-iso}. 
Then, to conclude, it suffices to check that $\fn{F}$ is fully faithful and essentially surjective. 
For every object $X$ in $\CC$, there is an $\RDoc$-comprehension arrow $\fun{k}{X}{A}$ where $A$ is an object in $\ct{H}$.
Since comprehensions are full, $k$ is the comprehension of its image $\preda = \gr{k}\rconv\rcomp\gr{k}$. Since $\fun{k_\preda}{A_\preda}{A}$ is another $\RDoc$-comprehension arrow for $\preda$, we have that $X$ and $A_\preda$ are isomorphic in $\CC$, proving that $\fn{F}$ is essentially surjective. 
If $\fun{h}{A_\preda}{B_\predb}$ is an arrow in $\CC$, since $B$ is strongly $\RDoc$-injective, we get a unique  arrow $\fun{f}{A}{B}$ such that 
$f \circ k_\preda = k_\predb \circ h$. 
Notice that 
$ \gr{f}\rconv\rcomp\preda\rcomp\gr{f} 
  = \gr{f}\rconv\rcomp\gr{k_\preda}\rconv\rcomp\gr{k_\preda}\rcomp\gr{f} 
  = \gr{k_\predb}\rconv\rcomp\gr{h}\rconv\rcomp\gr{h}\rcomp\gr{k_\predb}
  \order \gr{k_\predb}\rconv\rcomp\gr{k_\predb}
  = \predb$, 
since both $k_\preda$ and $k_\predb$ are full. 
Therefore, $\fun{f}{\ple{A,\preda}}{\ple{B,\predb}}$ is a well-defined arrow in $\CCt{I_\ct{H}^\star\RDoc}$ and so $h = h_f$, proving that $\fn{F}$ is fully faithful. 

For the other direction, 
consider a relational doctrine $\fun\SDoc{\bop{\ct{D}}}{\Pos}$ together with an equivalence 
$\oneAr{F}{\CR\SDoc}{\RDoc}$ in $\CRDtn$. 
First, note that objects of the form $\ple{A,\rid_A}$ (with $A$ an object of $\ct{D}$) constitute a strong $\CR\SDoc$-injective hull. 
Indeed, $\CR\SDoc$-comprehension arrows are supported by the identities of $\ct{D}$, 
hence, given $\fun{f}{\ple{B,\preda}}{\ple{A,\rid_A}}$ and $\fun{\id_B}{\ple{B,\preda}}{\ple{B,\predb}}$, the arrow $\fun{g}{\ple{B,\predb}}{\ple{A,\rid_A}}$ exists and is necessarily equal to $f$. 
Then, the thesis follows by \cref{lem:equiv-inj-hull}. 
\end{proof}

The definition of strongly $\RDoc$-injective object and that of strong $\RDoc$-injective hull dualise in the obvious way in the definitions of strong $\RDoc$-projective object and that of strong $\RDoc$-projective cover. Thus \cref{thm:inj-obj-strong} as the following corollary that follows immediately by duality.
 
\begin{corollary}\label[cor]{cor:proj-obj-strong-cor}
Let \fun\RDoc{\bop\CC}{\Pos}  be a relational doctrine with quotients. 
Then, $\RDoc$ has enough strong  projectives 
if and only if 
$\RDoc$ is equivalent to $\QR\SDoc$ in $\QRDtn$, for some relational doctrine $\SDoc$. 
\end{corollary}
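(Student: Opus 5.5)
The plan is to derive \cref{cor:proj-obj-strong-cor} from \cref{thm:inj-obj-strong} by applying the latter to $\RDoc\op$ and transporting everything back along the involution $\OP$. Let $\fun\RDoc{\bop\CC}{\Pos}$ have quotients. By \cref{prop:q-c-duality}, $\RDoc\op$ has comprehensions, so \cref{thm:inj-obj-strong} applies to it. It yields: $\RDoc\op$ has enough strong injectives if and only if $\RDoc\op$ is equivalent in $\CRDtn$ to $\CR{\SDoc'}$ for some relational doctrine $\SDoc'$. It then remains to translate both sides of this equivalence into statements about $\RDoc$.

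\textbf{Left-hand side.} I would refine \cref{prop:inj-proj-dual} to the strong versions. By \cref{prop:quot-to-comp}, the $\RDoc$-quotient arrows out of $X$ are exactly the $\RDoc\op$-comprehension arrows into $X$ in $\CC\op$. Hence a lifting diagram witnessing projectivity of $P$ in $\CC$ is literally the opposite of a diagram as in \eqref{diag:inj} in $\CC\op$. Uniqueness of the lift is preserved under this reversal. So $P$ is strongly $\RDoc$-projective if and only if it is strongly $\RDoc\op$-injective. Likewise, $\ct{G}$ is a strong $\RDoc$-projective cover if and only if $\ct{G}\op$ is a strong $\RDoc\op$-injective hull. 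Therefore $\RDoc$ has enough strong projectives if and only if $\RDoc\op$ has enough strong injectives.

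\textbf{Right-hand side.} By \cref{prop:q-c-duality-2-cat}, $\OP$ restricts to a 2-isomorphism $\QRDtn\co\cong\CRDtn$. A 2-functor sends equivalences to equivalences, and passing to $\co$ does not affect whether a 1-arrow is an equivalence, since the inverse and the invertible 2-cells are simply reversed. Thus $\RDoc\simeq\QR\SDoc$ in $\QRDtn$ if and only if $\RDoc\op\simeq(\QR\SDoc)\op$ in $\CRDtn$. By \eqref{eq:contresempio}, $(\QR\SDoc)\op=\CR{\SDoc\op}$. Setting $\SDoc'=\SDoc\op$, and conversely $\SDoc=\SDoc'^{\,\mathrm{op}}$ using $(\SDoc\op)\op=\SDoc$, the two existential statements match exactly.

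The only step needing some care is the strong-projective/strong-injective correspondence. It is a routine reversal of arrows, but one must note that projectivity in $\CC$ is defined via quotient arrows out of the projective object's codomain side. Everything else is formal bookkeeping with $\OP$ and the identities already recorded in the paper.
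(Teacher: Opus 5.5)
Your proposal is correct and matches the paper's route: the paper obtains this corollary from \cref{thm:inj-obj-strong} ``immediately by duality'', and you make that dualization explicit. You identify strong $\RDoc$-projective covers with strong $\RDoc\op$-injective hulls via \cref{prop:quot-to-comp}, transport equivalences along the 2-isomorphism $\QRDtn\co\cong\CRDtn$, and use $(\QR\SDoc)\op=\CR{\SDoc\op}$ from \eqref{eq:contresempio}. Only your closing remark is garbled, since the quotient arrow in the projectivity condition is the $q\colon X\to A$ along which a map $P\to A$ is lifted; this does not affect the argument.
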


\begin{example}
The category $\ct{SVec}$ of semi-normed vector spaces\footnote{Here we consider semi-norms  that take values in $[0,\infty]$.}  and short maps is the base of the relational doctrine $\mathsf{SN}$ sending two semi-normed vector spaces $X$ and $Y$ to the set of functions $\fun{\alpha}{\Car{X}\times\Car{Y}}{[0,\infty]}$ ordered point-wise and such that 
\begin{align*} 
\alpha(\vecx,\vecy)+\alpha(\vecx',\vecy') &\geq \alpha(\vecx+\vecx',\vecy+\vecy') \\ 
\alpha(a\cdot \vecx, a\cdot\vecy)  &= |a|\cdot \alpha(\vecx,\vecy) \\ 
\|\vecx\|_X + \|\vecy\|_Y &\geq \alpha(\vecx,\vecy) 
\end{align*} 
The first two condition state that $\alpha$ is a semi-norm on the product space $\Car{X}\times\Car{Y}$  and the third one requires $\alpha$ to be bounded by the  sum of the semi-norms of the two spaces. 
It is shown in \cite{DagninoP25apal} that this is a relational doctrine where relations compose like those in $\VRel\RPos$ and the identity on $X$ is given by 
$\rid_X(\vecx,\vecx') = \|\vecx-\vecx'\|_X$. 
Moreover, it has quotients: if $\eqrelr$ is an $\mathsf{SN}$-equivalence relation on $X$, its quotient is the arrow 
$\fun{\id_{\Car{X}}}{X}{\ple{\Car{X},\|\blank\|_\eqrelr}}$ where 
$\|\vecx\|_\eqrelr = \eqrelr(\vecx,\veczero)$. 
The category $\ct{Vec}$ of vector spaces and linear maps is a full subcategory of $\ct{SVec}$ via the nclusion assigning to any a vector space  the semi-norm that maps $\veczero$ to $0$ and the other vectors to $\infty$. 
It is an easy check that vector spaces, via this inclusion, form a strong $\mathsf{SN}$-projective cover, 
thus showing that $\mathsf{SN}$ is the intensional quotient completion of its restriction to $\ct{Vec}$. 
\end{example}

The previous results trace a connection between injective objects and comprehension dualising the known connection between projective objects and quotients, of which the characterisation of the exact categories that are an ex/wlex completion is an instance. 
Taking inspiration from 
the ex/wlex completion, in \cite{DagninoP26jpaa} we generalised to relational doctrines the key result  about the exact completion that monadic categories over $\Set$ are the ex/wlex completion of their free algebras (see  \cite{Vitale94}). 
We conclude the section writing a dual version of this theorem.

We first show how to construct doctrines of co-algebras for a co-monad in the 2-category $\RDtn$.
Let \fun{\RDoc}{\bop\CC}{\Pos}  be a relational doctrine and $\mnd = \ple{T,\epsilon,\delta}$ a co-monad on $\RDoc$ in $\RDtn$ (see \cite{Street72} for  the general 2-categorical definition and \cite{DagninoR21} for the instantiation on doctrines). 
We can define the 
doctrine \fun{\RDoc^\mnd}{\bop{\CC^{\fn\mnd}}}{\Pos} where the base is the category of co-algebras for the co-monad 
$\fn{\mnd} = \ple{\fn{T},\epsilon,\delta}$ on \CC and where the relations  from a co-algebra \ple{X,a} to a co-algebra \ple{Y,b} are defined as follows
\[
\RDoc^\mnd(\ple{X,a},\ple{Y,b})=\{\relr\in\RDoc(X,Y)\mid \relr\order\gr{a}\rcomp\lift{T}_{X,Y}(\relr)\rcomp\gr{b}\rconv\}
\]
Note that it always holds that  $\gr{a}\rcomp\lift{T}_{X,Y}(\relr)\rcomp\gr{b}\rconv\order\relr$ so the relation over the two co-algebras \ple{X,a} and \ple{Y,b} are the fixed points of the operator $\gr{a}\rcomp\lift{T}_{X,Y}(\relr)\rcomp\gr{b}$. In other words, these are bisimulations between the two coalgebras \cite{Jacobs16}.
The action on arrows is the restriction of that of $\RDoc$, that is $\RDoc^\mnd_{f,g}=\RDoc_{f,g}$. 

For a full subcategory $\D$ of $\CC$, we denote by $\D_{\fn{\mnd}}$ the full subcategory of the  category of co-algebras $\CC^{\fn\mnd}$ spanned by co-free co-algebras generated by objects in $\D$, i.e., co-algebras of the form \ple{\fn{T}X,\delta_X} for $X$ and object of \D. 
We also write \fun{K_\D}{\D_{\fn\mnd}}{\CC^{\fn\mnd}} for the inclusion functor.
Note that when \D coincides with \CC, the category $\D_{\fn\mnd}$ is the category of co-free co-algebras of  the co-monad $\fn\mnd$, sometimes called co-Kleisli category. 

By abstract results about (co)monads in a 2-category \cite{Street72}, we know that 
$\mnd = \ple{T,\epsilon.\delta}$ is a comonad on $\RDoc$ in $\RDtn$ if and only if 
$\mnd\op = \ple{T\op,\epsilon\op,\delta\op}$ is a monad on $\RDoc\op$ in $\RDtn$. 
Then, as a consequence of \cref{prop:q-c-duality}, we get the following results, which are the dual of the analogous ones for quotients (see \cite[Theorem 5.8]{DagninoP25tac} and \cite[Theorem 4.4]{DagninoP26jpaa}).

\begin{proposition}\label{prop:coalrel}
Let $\RDoc$ be a relational doctrine with comprehensions and $\mnd$ a comonad on it in $\CRDtn$. 
Then, $\RDoc^\mnd$ has comprehensions and, if $\RDoc$ is also extensional, $\RDoc^\mnd$ is extensional as well. 
\end{proposition}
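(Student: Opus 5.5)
The plan is to derive the statement by duality from the corresponding result for quotients, namely that for a relational doctrine with quotients and a monad on it in $\QRDtn$ the doctrine of algebras has quotients, and is extensional when the original one is (\cite[Theorem 5.8]{DagninoP25tac}, \cite[Theorem 4.4]{DagninoP26jpaa}). The comonad $\mnd=\ple{T,\epsilon,\delta}$ on $\RDoc$ in $\CRDtn$ is first transported along the isomorphism of \cref{prop:q-c-duality-2-cat}. The restriction of $\OP$ gives an isomorphism $\QRDtn\co\cong\CRDtn$. Under it, $\mnd$ corresponds to $\mnd\op=\ple{T\op,\epsilon\op,\delta\op}$, which is a monad on $\RDoc\op$ in $\QRDtn$. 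Indeed, reversing 2-cells turns comonads into monads, as recalled before the statement via \cite{Street72}. Moreover $T\op$ preserves quotients, because $T$ preserves comprehensions (\cref{prop:quot-to-comp}). By \cref{prop:q-c-duality}, $\RDoc\op$ has quotients, so the quotient result gives that $(\RDoc\op)^{\mnd\op}$ has quotients.

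The key step is the identification
\[ \RDoc^\mnd = \bigl((\RDoc\op)^{\mnd\op}\bigr)\op . \]
The base categories agree: coalgebras for $\fn\mnd$ on $\CC$ are exactly algebras for $\fn\mnd\op$ on $\CC\op$, so $(\CC^{\fn\mnd})\op=(\CC\op)^{\fn\mnd\op}$. For the fibres, I will unfold the definition of the algebra doctrine from \cite{DagninoP25tac}. There, relations between algebras $\ple{X,a}$ and $\ple{Y,b}$ are those $\relr$ with $\Ex\reidx{a,b}(\lift{T}(\relr))\order\relr$, i.e.\ $\gr{a}\rconv\rcomp\lift{T}(\relr)\rcomp\gr{b}\order\relr$.

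I then read this inequality in $\RDoc\op$. Composition is reversed there, the order is reversed, and graphs and converses are unchanged. So it becomes $\relr\order\gr{b}\rcomp\lift{T}(\relr)\rcomp\gr{a}\rconv$, taken in the fibre $\RDoc(Y,X)$ with the variables swapped. After passing to $(\blank)\op$ once more, which swaps the variables back, this is exactly the condition defining $\RDoc^\mnd(\ple{X,a},\ple{Y,b})$.

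Reindexing in both doctrines is the restriction of the ambient reindexing, and $\RDoc\op\reidx{f,g}=\Ex\reidx{g,f}$, so dualising twice gives back $\RDoc\reidx{f,g}$. The relational operations are inherited in both cases, so they match as well.

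Given this identification, \cref{prop:q-c-duality} applied to $(\RDoc\op)^{\mnd\op}$ shows that $\RDoc^\mnd$ has comprehensions. For extensionality, extensionality is self-dual as observed before \cref{prop:e-q-c-duality-2-cat}. Hence if $\RDoc$ is extensional, so is $\RDoc\op$, then so is $(\RDoc\op)^{\mnd\op}$ by the quotient result, and therefore so is its opposite $\RDoc^\mnd$.

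The main obstacle is the fibre identification. The exact form of the algebra doctrine in \cite{DagninoP25tac}, including the variance conventions and whether the defining inequality sits at $\RDoc\op(X,Y)=\RDoc(Y,X)\op$ with the arguments swapped, has to be checked carefully so that the double dualisation lands on the stated bisimulation condition. If the cited definition uses the dual formulation $\lift{T}(\relr)\order\RDoc\reidx{a,b}(\relr)$, I will first rewrite it through the adjunction $\Ex\dashv\RDoc\reidx{}$ before dualising.
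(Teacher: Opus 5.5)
Your proposal is correct and takes the paper's own route. The paper proves this statement in one line, as the dual (via \cref{prop:q-c-duality} and the correspondence between comonads on $\RDoc$ and monads on $\RDoc\op$) of the quotient result in \cite[Theorem 5.8]{DagninoP25tac}. Your explicit check that $\RDoc^\mnd = \bigl((\RDoc\op)^{\mnd\op}\bigr)\op$ is the identification the paper leaves implicit, and it holds: in particular, the algebra condition $\Ex\reidx{a,b}(\lift{T}(\relr))\order\relr$, read in $\RDoc\op$ and dualised back, gives exactly the defining bisimulation condition.
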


\begin{theorem}\label[thm]{thm:eqc-comnd} 
Let \fun\RDoc{\bop\CC}{\Pos} be an extensional relational doctrine with comprehension and $\ct{H}$ a subcategory of \CC. 
The following are equivalent. 
\begin{enumerate}
\item\label{thm:eqc-mnd:1}
\ct{H} is an $\RDoc$-injective hull.
\item\label{thm:eqc-mnd:2}
For every co-monad $\mnd = \ple{T,\epsilon,\delta}$ on $\RDoc$ in \ECRDtn, $\ct{H}_{\fn\mnd}$ is an $\RDoc^\mnd$-injective hull.
\end{enumerate}
\end{theorem}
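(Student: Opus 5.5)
The plan is to obtain \cref{thm:eqc-comnd} by dualising \cite[Theorem 4.4]{DagninoP26jpaa}, which states the following. For an extensional relational doctrine $\SDoc$ with quotients and a subcategory $\ct{G}$ of its base, $\ct{G}$ is an $\SDoc$-projective cover if and only if, for every monad $\mnd'$ on $\SDoc$ in $\EQRDtn$, the category $\ct{G}^{\fn{\mnd'}}$ of free algebras on objects of $\ct{G}$ is an $\SDoc^{\mnd'}$-projective cover. I will apply it with $\SDoc = \RDoc\op$ and $\ct{G} = \ct{H}\op$. By \cref{prop:q-c-duality} and the self-duality of extensionality, $\RDoc\op$ is an extensional relational doctrine with quotients. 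By \cref{prop:inj-proj-dual}, $\ct{H}$ is an $\RDoc$-injective hull if and only if $\ct{H}\op$ is an $\RDoc\op$-projective cover, so item \ref{thm:eqc-mnd:1} matches the first condition of the quoted theorem.

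Next I handle the quantification over (co)monads. By \cref{prop:e-q-c-duality-2-cat}, $\OP$ restricts to an isomorphism between $\EQRDtn\co$ and $\ECRDtn$. Combined with Street's observation recalled before \cref{prop:coalrel}, this shows that $\mnd\mapsto\mnd\op$ is a bijection between comonads on $\RDoc$ in $\ECRDtn$ and monads on $\RDoc\op$ in $\EQRDtn$. The reversal of 2-cells under $\co$ is what turns a comonad into a monad.

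The key step is to identify $(\RDoc^{\mnd})\op$ with $(\RDoc\op)^{\mnd\op}$, the doctrine of algebras for the monad $\mnd\op$ defined as in \cite{DagninoP25tac}. On bases this is clear: coalgebras for $\fn{\mnd}$ on $\CC$ are exactly algebras for $\fn{\mnd}\op$ on $\CC\op$, and cofree coalgebras $\ple{\fn T X,\delta_X}$ correspond to free algebras. Hence $(\ct{H}_{\fn\mnd})\op = (\ct{H}\op)^{\fn{\mnd}\op}$ as full subcategories. On fibres, a relation $\relr\in\RDoc^\mnd(\ple{X,a},\ple{Y,b})$ satisfies $\relr\order\gr{a}\rcomp\lift{T}(\relr)\rcomp\gr{b}\rconv$. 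Read in $\RDoc\op(Y,X)=\RDoc(X,Y)\op$, with composition reversed and graphs unchanged, this says $\relr\oporder \gr{b}\rcomp\lift{T}(\relr)\rconv$-style expressions, which is the descent-data condition of the algebra doctrine over $\mnd\op$ after transposition. Reindexing also agrees, since it is restriction of $\RDoc$-reindexing on one side and of $\Ex$ on the other. I expect this bookkeeping to be the main obstacle: matching the precise form of the algebra doctrine in \cite{DagninoP25tac} (with the relevant $\Ex$ in place of reindexing, and converses) against the coalgebra fibres here. It may also require using that $\gr{a}$ is functional and total, so that the fixed-point form can be rewritten in both directions. If the definitions only match up to the order-isomorphism induced by converse, I will use that isomorphism, which commutes with all the structure.

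With this identification in hand, item \ref{thm:eqc-mnd:2} becomes: for every monad $\mnd\op$ on $\RDoc\op$ in $\EQRDtn$, $(\ct{H}\op)^{\fn{\mnd}\op}$ is an $(\RDoc\op)^{\mnd\op}$-projective cover. This uses \cref{prop:inj-proj-dual} again, applied to $\RDoc^\mnd$, which has comprehensions and is extensional by \cref{prop:coalrel}. The equivalence of \ref{thm:eqc-mnd:1} and \ref{thm:eqc-mnd:2} then follows directly from \cite[Theorem 4.4]{DagninoP26jpaa}.
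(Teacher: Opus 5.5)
Your proposal is correct and follows the paper's route. The paper gives no separate argument: it obtains the theorem by dualising \cite[Theorem 4.4]{DagninoP26jpaa} along $\OP$, using \cref{prop:inj-proj-dual} and the correspondence between comonads on $\RDoc$ in $\ECRDtn$ and monads on $\RDoc\op$ in $\EQRDtn$. The fibre bookkeeping you flag as the main obstacle is immediate. By the adjunction $\Ex\reidx{a,b}\dashv\RDoc\reidx{a,b}$, the condition $\relr\order\gr{a}\rcomp\lift{T}(\relr)\rcomp\gr{b}\rconv$ is equivalent to $\gr{a}\rconv\rcomp\relr\rcomp\gr{b}\order\lift{T}(\relr)$. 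Read in $\RDoc\op$, where reindexing is $\Ex$ and the order is reversed, this is literally the algebra condition for $\mnd\op$, so $(\RDoc^\mnd)\op=(\RDoc\op)^{\mnd\op}$ on the nose, and you need neither converses nor totality of $\gr{a}$.
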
 

Observe that the co-monad $\mnd$ of \cref{thm:eqc-comnd} is in \ECRDtn, this means that the underlying 1-arrow preserves comprehension. 

\begin{example} 
Consider the relational doctrine  $\fun{\Rel}{\bop\Set}{\Pos}$ of set theoretic relations. Here a $\Rel$-predicate over a set $A$ is just a subset of $A$ and $\Rel$-comprehension arrows are the injective functions to $A$. So an 1-arrow \oneAr{F}{\Rel}{\Rel} preserves $\Rel$-comprehension if it $\fn{F}$ preserves injections. 
Every co-monad over $\Set$ preserves injections so every co-monadic category over $\Set$ is an extensional comprehension completion. 
Indeed, by \cref{ex:hulls:span}, because $\Set$ is a boolean topos, it admits at least two $\Rel$-injective hulls: 
one is given by non-empty sets and the other one by powersets. 
\end{example}

\subsection{Comprehension-surjection factorization}
\label{sect:comp-factor} 

In \cite{DagninoP26jpaa} it is proved that, if $\RDoc$ is a relational doctrine with quotients, 
then $\RDoc$-quotient arrows are the left class of an orthogonal factorization system (on the base of $\RDoc$) whose right class are $\RDoc$-injections. 
In particular, the factorization of an arrow $\fun{f}{X}{Y}$ is obtained by quotienting $X$ by the kernel of $\ker_f=\gr{f}\rcomp\gr{f}\rconv$. 
By duality, we get the following result. 

\begin{proposition}\label{prop:fs-cq} 
Let $\fun{\RDoc}{\bop\CC}{\Pos}$ be a relational doctrine with comprehension, then $(\ct{S}_\RDoc, \ct{K}_\RDoc)$ is an orthogonal  factorisation system on \CC where $\ct{S}_\RDoc$ is the class of $\RDoc$-surjections and $\ct{K}_\RDoc$ is the class of $\RDoc$-comprehension arrows.
\end{proposition}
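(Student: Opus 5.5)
The plan is to derive \cref{prop:fs-cq} by duality from the quotient--injection factorization of \cite{DagninoP26jpaa}, applied to $\RDoc\op$. By the remark following \cref{prop:q-c-duality}, $\RDoc$ has comprehensions if and only if $\RDoc\op$ has quotients. The cited result then yields an orthogonal factorization system on the base $\CC\op$ of $\RDoc\op$. Its left class consists of the $\RDoc\op$-quotient arrows and its right class of the $\RDoc\op$-injective arrows.

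The first step is to transport this system along $\CC\op \cong \CC$, reversing arrows. Orthogonal factorization systems dualize, with left and right classes swapping roles. We thus obtain a system on $\CC$ whose left class is the set of arrows that are $\RDoc\op$-injective in $\CC\op$, and whose right class is the set of arrows that are $\RDoc\op$-quotient arrows in $\CC\op$.

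The second step is to identify these two classes with $\ct{S}_\RDoc$ and $\ct{K}_\RDoc$.
\begin{itemize}
\item For the left class, graphs are unchanged when passing from $\RDoc$ to $\RDoc\op$, as computed before \cref{prop:inj-to-surj}. By \cref{prop:inj-to-surj} applied to graphs, an arrow is $\RDoc\op$-injective exactly when it is $\RDoc$-surjective. Hence the left class is $\ct{S}_\RDoc$.
\item For the right class, apply \cref{prop:quot-to-comp} to $\RDoc\op$, using $(\RDoc\op)\op=\RDoc$. Predicates of $\RDoc$ are exactly the equivalence relations of $\RDoc\op$. An arrow $u$ is an $\RDoc\op$-quotient arrow for $\preda$ if and only if it is an $\RDoc$-comprehension arrow for $\preda$. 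Hence the right class is the class of all arrows that are $\RDoc$-comprehension arrows for some predicate, namely $\ct{K}_\RDoc$.
\end{itemize}

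The main obstacle is matching the precise classes in the cited result. In \cite{DagninoP26jpaa} the left class is presumably ``arrows that are quotient arrows of some equivalence relation'', and the relevant quotients are effective and surjective. I need to check the following.
\begin{itemize}
\item The plain ``quotient arrow'' of the cited result corresponds here to the plain ``comprehension arrow'' of \cref{prop:quot-to-comp}, and not merely to full ones.
\item Any $\RDoc$-comprehension arrow of some predicate is in fact the full comprehension of its own image $\img_k$. This is the dual of the fact that a quotient arrow is an effective quotient of its kernel, and it makes the class independent of these choices.
\end{itemize}

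Should the cited statement be phrased in terms of strong rather than plain arrows, I would instead verify directly that comprehension arrows are monic. This follows from their universal property via uniqueness of $h$ in a \emph{strong} comprehension. Since $\RDoc$ having comprehensions means strong ones, both formulations coincide.
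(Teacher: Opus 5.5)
Your proposal is correct and follows the paper's proof: the paper derives \cref{prop:fs-cq} ``by duality'' from the quotient/injection orthogonal factorization system of \cite{DagninoP26jpaa} applied to $\RDoc\op$. Your identification of the dualized classes, via \cref{prop:inj-to-surj} for the left class and \cref{prop:quot-to-comp} applied to $\RDoc\op$ for the right class, is exactly the bookkeeping the paper leaves implicit. Your extra checks (fullness of a comprehension arrow with respect to its own image, and monicity of strong comprehension arrows) are sound, but they are not needed beyond what \cref{prop:quot-to-comp} already gives.
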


Here, the factorization of an arrow $\fun{f}{X}{Y}$ is obtained by taking the comprehension  of the image of $f$ over $Y$, \ie the relation $\img_f=\gr{f}\rconv\rcomp\gr{f}$.

%% file: qc-interaction.tex
\section{Interaction between quotients and comprehensions}
\label{sect:q-c-interaction}

Relational doctrines with both quotients an comprehensions  have many interesting properties. 
First of all, they are self-dual, in the sense that 
if $\RDoc$ has both quotients and comprehensions, then $\RDoc\op$ has both quotients and comprehensions. 
Furthermore, they enjoy a form of the first homomorphism theorem. 
Indeed, if $\RDoc$ has both quotients and comprehensions, by results in \cref{sect:comp-factor}, 
every arrow $\fun{f}{X}{Y}$ in the base of $\RDoc$ has two factorizations as in the following commutative diagram. 
\[\xymatrix{
& Y_{\img_f} \ar[rd]^-{k_f} \\
  X \ar[rr]^-{f} \ar[ur]^-{s_f} \ar[rd]_-{q_f} 
&& Y \\
& X/\ker_f \ar[ur]_-{i_f} 
}\] 
 where $\fun{q_f}{X}{X/\ker_f}$ is an $\RDoc$-quotient arrow for $\ker_f $, 
$\fun{k_f}{Y_{\img_f}}{Y}$ is an $\RDoc$-comprehension arrow for $\img_f$, and 
$s_f$ and $i_f$ are an $\RDoc$-surjection and an $\RDoc$-injection respectively.

\begin{proposition}
Let $\fun{\RDoc}{\bop\CC}{\Pos}$ be a relational doctrine with quotients  and comprehensions. 
Then, for every arrow $\fun{f}{X}{Y}$ in $\CC$, there exists a unique arrow $\fun{h_f}{X/\ker_f}{Y_{\img_f}}$ making the following diagram commute
\[\xymatrix{
& Y_{\img_f} \ar[rd]^-{k_f} \\
  X \ar[ur]^-{s_f} \ar[rd]_-{q_f} 
&& Y \\
& X/\ker_f \ar[uu]^-{h_f} \ar[ur]_-{i_f} 
}\] 
Moreover, $h_f$ is $\RDoc$-bijective. 
\end{proposition}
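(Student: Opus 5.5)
The plan is to obtain $h_f$ from the universal property of the comprehension arrow $k_f$ applied to $i_f$, or equivalently from the quotient $q_f$ applied to $s_f$.

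\textbf{Existence.} Since $k_f$ is a strong comprehension arrow for $\img_f$, it suffices to show $\img_{i_f}\order\img_f$. Because $q_f$ is $\RDoc$-surjective (quotients are surjective), $\rid_{X/\ker_f}\order\gr{q_f}\rconv\rcomp\gr{q_f}$. Hence
\[
\img_{i_f}=\gr{i_f}\rconv\rcomp\gr{i_f}\order\gr{i_f}\rconv\rcomp\gr{q_f}\rconv\rcomp\gr{q_f}\rcomp\gr{i_f}=\gr{f}\rconv\rcomp\gr{f}=\img_f ,
\]
using $\gr{i_f\circ q_f}=\gr{q_f}\rcomp\gr{i_f}$. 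This yields a unique $h_f$ with $k_f\circ h_f=i_f$.

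\textbf{Commutation and uniqueness.} We have $k_f\circ h_f\circ q_f=i_f\circ q_f=f=k_f\circ s_f$. Since $k_f$ is monic (comprehensions are strong), $h_f\circ q_f=s_f$. Any $h$ making both triangles commute satisfies $k_f\circ h=i_f$, so uniqueness follows from that of the comprehension factorization. Alternatively, $q_f$ is epic by the quotient universal property. Symmetrically, one could build $h_f$ via the quotient using $\ker_f\order\ker_{s_f}$ and the injectivity of $k_f$; this dual argument confirms the construction.

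\textbf{Bijectivity.} For injectivity, $k_f$ is $\RDoc$-injective (comprehensions are injective), so $\gr{k_f}\rcomp\gr{k_f}\rconv=\rid$. Then
\[
\gr{h_f}\rcomp\gr{h_f}\rconv=\gr{h_f}\rcomp\gr{k_f}\rcomp\gr{k_f}\rconv\rcomp\gr{h_f}\rconv=\gr{i_f}\rcomp\gr{i_f}\rconv=\rid ,
\]
since $i_f$ is an $\RDoc$-injection by the factorization of \cite{DagninoP26jpaa}. For surjectivity, dually, $q_f$ is $\RDoc$-surjective and $s_f$ is $\RDoc$-surjective, so
\[
\gr{h_f}\rconv\rcomp\gr{h_f}=\gr{h_f}\rconv\rcomp\gr{q_f}\rconv\rcomp\gr{q_f}\rcomp\gr{h_f}=\gr{s_f}\rconv\rcomp\gr{s_f}=\rid .
\]
Each identity here is an equality; where only $\order$ is immediate, the reverse inequality comes from totality or functionality of graphs.

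The main obstacle is making sure the facts used are actually available: that $i_f$ is $\RDoc$-injective and $s_f$ is $\RDoc$-surjective as given by the factorization systems, and that surjectivity and injectivity hold as equalities so these rewrites are valid. These follow from the definitions of quotients and comprehensions (surjective and injective) together with \cref{prop:fs-cq} and its dual.
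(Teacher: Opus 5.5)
Your proof is correct and follows the paper's argument essentially step for step. You obtain $h_f$ from the comprehension property of $k_f$ applied to $i_f$, using surjectivity of $q_f$ to compare $\img_{i_f}$ with $\img_f$, and you get $h_f\circ q_f = s_f$ from monicity of $k_f$. You prove bijectivity by the same two graph computations, using injectivity of $k_f$ and $i_f$ and surjectivity of $q_f$ and $s_f$. The only cosmetic difference is that you use just $\img_{i_f}\order\img_f$ where the paper records an equality; the inequality is all the comprehension property needs.
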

\begin{proof} 
Observe that the image of $i_f$ coincides with the image of $f$: using the fact that $q_f$ is surjective , as it is a quotient arrow, we have 
$ \gr{i_f}\rconv\rcomp\gr{i_f} 
  = \gr{i_f}\rconv\rcomp\gr{q_f}\rconv\rcomp\gr{q_f}\rcomp\gr{i_f} 
  = \gr{f}\rconv\rcomp\gr{f} 
$. 
Since $k_f$ is a comprehension arrow, we have a unique arrow $\fun{h_f}{X/\ker_f}{Y_{\img_f}}$  such that 
$k_f\circ h_f = i_f$. 
Precomposing with $q_f$ we get 
$k_f \circ h_f \circ q_f = i_f \circ q_f = f = k_f \circ s_f$, thus, because $k_f$ is monic, as it is a comprehension arrow, we  conclude that 
$h_f\circ q_f = s_f$, proving that the diagram commutes. 

Finally the equalities
\begin{align*}
\gr{h_f}\rcomp\gr{h_f}\rconv 
  &= \gr{h_f}\rcomp\gr{k_f}\rcomp\gr{k_f}\rconv\rcomp\gr{h_f}\rconv 
   = \gr{i_f}\rcomp\gr{i_f}\rconv 
   = \rid_Y \\
\gr{h_f}\rconv\rcomp\gr{h_f} 
  &= \gr{h_f}\rconv\rcomp\gr{q_f}\rconv\rcomp\gr{q_f}\rcomp\gr{h_f} 
   = \gr{s_f}\rconv\rcomp\gr{s_f}
   = \rid_X 
\end{align*}
shows that $h_f$ is $\RDoc$-bijective.
\end{proof} 

We regard this result as a form of the first homomorphism theorem as it states that 
the (comprehension of the) image of $f$ is  in bijection with the quotient of the domain by the kernel of $f$. 
This ensures that, from the point of view of relations, these two objects are indistinguishable. 
To turn the bijection into an actual isomorphism, we need an additional property of $\RDoc$: it has to be \emph{balanced} that means exactly that every $\RDoc$-bijective arrow is an isomorphism (see \cite{DagninoP26jpaa}). 

A natural question at this point is whether we can compose the two completions we have for quotients and comprehensions to add both of them to a relational doctrine.
From a 2-categorical point of view, this amount to showing that the two 2-monads $\QMnd$ and $\CMnd$ compose. 
In turn, this is equivalent to  prove either that $\QMnd$ restricts to $\CRDtn$ or that $\CMnd$ restricts to $\QRDtn$. 
Unfortunately, none of these to facts hold, as witnessed by the following examples. 

\begin{example}\label{ex:controcontro}
Consider the relational doctrine $\VRel\RPos$ of metric relations introduced in \cref{ex:rel-doc:vrel}. 
As noticed in \cref{ex:comp:vrel}, it has comprehensions because coequivalence relations on  a set $X$ correspond to its subsets. 
Now consider its intensional quotient completion $\RDoc = \QR{\VRel\RPos}$ that, as observed in \cite{DagninoP25apal}, it is the relational doctrine of bimodules over(pseudo)metric spaces. 
Consider an $\RDoc$-predicate $\preda$ over $\ple{X,\eqrelr}$ and let $\hat\preda \in \VRel\RPos(X,X)$ be 
\[\hat\preda(x,x') = \begin{cases}
0 &  x = x' \text{ and } \preda(x,x') = 0 \\
\infty & \text{otherwise} 
\end{cases}\]
Let $\fun{k_{\hat\preda}}{X_{\hat\preda}}{X}$ be a $\VRel\RPos$-comprehension arrow of $\hat\preda$ and consider  the $\VRel\RPos$-equivalence relation on $X_{\hat\preda}$ given by 
$\eqrelr_{\hat\preda} = \gr{k_{\hat\preda}}\rcomp\eqrelr\rcomp\gr{k_{\hat\preda}}\rconv$. 
Then, the arrow 
$\fun{k_{\hat\preda}}{\ple{X_{\hat\preda},\eqrelr_{\hat\preda}}}{\ple{X,\eqrelr}}$ is a well-defined arrow in the base of $\RDoc$. 
We claim that this is an $\RDoc$-comprehension arrow for $\preda$. 
Indeed, if $\fun{f}{\ple{Y,\eqrels}}{\ple{X,\eqrelr}}$ is an arrow such that $\eqrels \order \gr{f}\rcomp\preda\rcomp\gr{f}\rconv$, 
then we have that $\preda(f(y),f(y)) = 0$ for all $y \in Y$, as $0 = \eqrels(y,y) \geq \preda(f(y),f(y))$. 
Hence, this ensures that $\rid_Y\order \gr{f}\rcomp\hat\preda\rcomp\gr{f}\rconv$ and so, 
 because $k_{\hat\preda}$ is a $\VRel\RPos$-comprehension arrow for $\hat\preda$, we get a unique arrow $\fun{g}{Y}{X_{\hat\preda}}$ such that 
$k_{\hat\preda}\circ g = f$. 
To conclude, it remains to show that 
$\eqrels \order \gr{g}\rcomp\eqrelr_{\hat\preda}\rcomp\gr{g}\rconv = \gr{g}\rcomp\gr{k_{\hat\preda}}\rcomp\eqrelr\rcomp\gr{k_{\hat\preda}}\rconv\rcomp\gr{g}\rconv = \gr{f}\rcomp\eqrelr\rcomp\gr{f}\rconv$, which is immediate by definition of $f$. 

This shows that $\RDoc$ has strong comprehensions. However, these are not full in general. 
Consider a set $X = \{a,b,c\}$ and two functions $\fun{\eqrelr,\preda}{X\times X}{[0,\infty]}$ depicted below:
\[\begin{tikzpicture}[
    edge/.style={line width=0.8pt},
    vtx/.style={inner sep=1pt, outer sep=3pt},
    loop_a/.style={edge, looseness=8, distance=1.2cm, out=210, in=150},
    loop_b/.style={edge, looseness=8, distance=1.2cm, out=330, in=30},
    loop_c/.style={edge, looseness=8, distance=1.2cm, out=60,  in=120}
]
\node at (0, 2) {$\eqrelr$};

\node[vtx] (a1) at (0,0) {a};
\node[vtx] (b1) at (2,0) {b};
\node[vtx] (c1) at (1,1.73) {c};

\path (a1) edge[loop_a, "0" {left}] (a1);
\path (b1) edge[loop_b, "0" {right}] (b1);
\path (c1) edge[loop_c, "0" {above}] (c1);

\draw[edge] (a1) -- node[below] {1} (b1);
\draw[edge] (b1) -- node[above right] {2} (c1);
\draw[edge] (c1) -- node[above left] {3} (a1);

\begin{scope}[xshift=5.5cm]
\node at (0, 2) {$\preda$};

\node[vtx] (a2) at (0,0) {a};
\node[vtx] (b2) at (2,0) {b};
\node[vtx] (c2) at (1,1.73) {c};

\path (a2) edge[loop_a, "0" {left}] (a2);
\path (b2) edge[loop_b, "2" {right}] (b2);
\path (c2) edge[loop_c, "5" {above}] (c2);

\draw[edge] (a2) -- node[below] {1} (b2);
\draw[edge] (b2) -- node[above right] {4} (c2);
\draw[edge] (c2) -- node[above left] {3} (a2);
\end{scope}

\end{tikzpicture}\]
It is easy to see that $\eqrelr$ is a $\VRel\RPos$-equivalence relation on $X$, thus $\ple{X,\eqrelr}$ is an object in the base of $\RDoc$, 
and $\preda$ is an $\RDoc$-predicate on it. 
The $\RDoc$-comprehension arrow is 
$\fun{k_{\hat\preda}}{\ple{\{a\},\rid_{\{a\}}}}{\ple{X,\eqrelr}}$ where $k_{\hat\preda}$ is the inclusion of $\{a\}$ into $X$. 
Its image is the relation $\relr = \eqrelr \rcomp \hat\preda \rcomp \eqrelr$ on $X$ depicted below, which is clearly different from $\preda$. 
\[\begin{tikzpicture}[
    edge/.style={line width=0.8pt},
    vtx/.style={inner sep=1pt, outer sep=3pt},
    loop_a/.style={edge, looseness=8, distance=1.2cm, out=210, in=150},
    loop_b/.style={edge, looseness=8, distance=1.2cm, out=330, in=30},
    loop_c/.style={edge, looseness=8, distance=1.2cm, out=60,  in=120}
]

\node[vtx] (a1) at (0,0) {a};
\node[vtx] (b1) at (2,0) {b};
\node[vtx] (c1) at (1,1.73) {c};

\path (a1) edge[loop_a, "0" {left}] (a1);
\path (b1) edge[loop_b, "2" {right}] (b1);
\path (c1) edge[loop_c, "6" {above}] (c1);

\draw[edge] (a1) -- node[below] {1} (b1);
\draw[edge] (b1) -- node[above right] {4} (c1);
\draw[edge] (c1) -- node[above left] {3} (a1);

\end{tikzpicture}\] 
\end{example}

\cref{ex:controcontro} shows that  $\QMnd$ does not restrict to $\CRDtn$. Thanks to \cref{eq:contresempio} and \cref{prop:q-c-duality}, the same example shows also that also $\CMnd$ does not restrict to $\QRDtn$.

Looking at the above examples, it is pretty clear that the main issue is that the completions  modify identity relations  and, as a consequence, equivalences and predicates. 
Then, it is difficult, for instance, to turn an $\QR\RDoc$-predicate on $\ple{X,\eqrelr}$ into an $\RDoc$-predicate on $X$ so that one can use comprehensions in $\RDoc$  to  obtain those in $\QR\RDoc$. 
The rest of this section is devoted to the study of sufficient conditions ensuring that this happens. 

The first condition we consider is modularity. Recall from \cref{prop:pred-modular} that in a modular relational doctrine $\RDoc$, predicates coincide with coreflexive relations. Then, if we have an endorelation $\relr$ in $\RDoc(X,X)$, we can turn  it into a predicate by taking $\relr\land\rid_X$. 
The modularity condition gives to comprehensions many useful properties as described by the following proposition. 

\begin{proposition} \label[prop]{prop:c-mod-pullback}
Let $\fun{\RDoc}{\bop\CC}{\Pos}$ be a modular relational doctrine with comprehensions. 
For every arrow $\fun{f}{X}{Y}$ and $\RDoc$-comprehension arrow $\fun{k}{A}{Y}$, 
the pullback of $k$ along $f$ exists 
\[\xymatrix{
  P \ar[r]^-{f'} \ar[d]^-{k'}
& A \ar[d]^-{k} \\
  X \ar[r]^-{f} 
& Y 
}\]
where $\fun{k'}{P}{X}$ is an $\RDoc$-comprehension arrow 
and the following hold: 
\begin{enumerate} 
\item if $f$ is $\RDoc$-injective, $f'$ is $\RDoc$-injective, 
\item if $f$ is $\RDoc$-surjective, $f'$ is $\RDoc$-surjective, 
\item if $f$ is an $\RDoc$-comprehension arrow, $f'$ is an $\RDoc$-comprehension arrow. 
\end{enumerate}
\end{proposition}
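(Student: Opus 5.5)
Let $\preda = \img_k$ be the $\RDoc$-predicate on $Y$ of which $k$ is the comprehension arrow; since comprehensions are full, $k$ is a full, injective, strong comprehension of $\preda$. Pull $\preda$ back along $f$ to a predicate on $X$. In a modular doctrine, \cref{prop:pred-modular} says predicates are exactly the coreflexive relations. So put
\[ \preda' = \RDoc\reidx{f,f}(\preda)\land\rid_X = (\gr{f}\rcomp\preda\rcomp\gr{f}\rconv)\land\rid_X , \]
which is coreflexive, hence an $\RDoc$-predicate on $X$. Let $\fun{k'}{P}{X}$ be its comprehension arrow. The first step is to show $\img_{f\circ k'}\order\preda$; then the universal property of $k$ yields $f'$ with $k\circ f'=f\circ k'$. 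The computation is $\gr{k'}\rconv\rcomp\gr{k'}=\preda'\order\gr{f}\rcomp\preda\rcomp\gr{f}\rconv$, so $\gr{f}\rconv\rcomp\preda'\rcomp\gr{f}\order\gr{f}\rconv\rcomp\gr{f}\rcomp\preda\rcomp\gr{f}\rconv\rcomp\gr{f}\order\preda$, using functionality of $\gr{f}$.

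For the pullback property, take $\fun{u}{Z}{X}$ and $\fun{v}{Z}{A}$ with $f\circ u=k\circ v$. We need $\img_u\order\preda'$. Coreflexivity $\img_u\order\rid_X$ is not automatic, so I would use the modular law. Since $\img_u=\gr{u}\rconv\rcomp\gr{u}$, modularity together with totality of $\gr{u}$ gives $\img_u\land\rid_X$ as a lower bound of the form $\gr{u}\rconv\rcomp(\ldots)$, and then \cref{prop:pred-modular} lets me pass from $\img_u$ to its coreflexive part. I would show $\gr{u}\rconv\rcomp\gr{u}\order\gr{f}\rcomp\img_{f\circ u}\rcomp\gr{f}\rconv$ from totality of $\gr{f}$, and use $\img_{f\circ u}=\img_{k\circ v}\order\preda$. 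Honestly, what I need is that the comprehension factorization only requires $\img_u\land\rid_X\order\preda'$; equivalently, $k'$ being the comprehension of a coreflexive relation makes the test $\img_u\order\preda'$ reduce to $\rid_Z\order\gr{u}\rcomp\preda'\rcomp\gr{u}\rconv$. This reformulation follows from modularity via $\rid_Z\land\gr{u}\rcomp\gr{u}\rconv$ manipulations, and I expect it to be the \textbf{main technical obstacle}.

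Given this, we obtain $w$ with $k'\circ w=u$. Then $k\circ f'\circ w=f\circ u=k\circ v$, and since $k$ is monic, $f'\circ w=v$. Uniqueness of $w$ follows because $k'$ is monic.

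For the three items, I would compute relationally using $\gr{k}\rcomp\gr{k}\rconv=\rid_A$ (injectivity of $k$) and $\gr{f'}=\gr{k'}\rcomp\gr{f}\rcomp\gr{k}\rconv$, which holds since $\gr{k'}\rcomp\gr{f}=\gr{f'}\rcomp\gr{k}$. For (1), $\gr{f'}\rcomp\gr{f'}\rconv\order\gr{k'}\rcomp\gr{f}\rcomp\gr{f}\rconv\rcomp\gr{k'}\rconv\order\gr{k'}\rcomp\gr{k'}\rconv=\rid_P$. For (2), we need $\rid_A\order\gr{f'}\rconv\rcomp\gr{f'}=\gr{k}\rcomp\gr{f}\rconv\rcomp\preda'\rcomp\gr{f}\rcomp\gr{k}\rconv$. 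Here modularity is used again: $\gr{f}\rconv\rcomp((\gr{f}\rcomp\preda\rcomp\gr{f}\rconv)\land\rid_X)\rcomp\gr{f}$ should be at least $\preda\land(\gr{f}\rconv\rcomp\gr{f})=\preda\land\rid_Y=\preda$ when $f$ is surjective, and conjugating by $\gr{k}$ gives $\rid_A$. For (3), if $f$ is a comprehension arrow for $\predb$, then $f'$ should be a comprehension arrow for $\gr{k}\rcomp\predb\rcomp\gr{k}\rconv\land\rid_A$. This follows because composites of comprehension arrows are comprehension arrows (the right class of the factorization system in \cref{prop:fs-cq} is closed under composition), and the right class of an orthogonal factorization system is stable under pullback; so $f'$ lies in $\ct{K}_\RDoc$. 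I would use this abstract argument for (3), and it also gives an alternative route to (1), since comprehension arrows are injective.
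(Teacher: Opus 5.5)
Your overall proof is the paper's: the same predicate $\preda'$, the same construction of $f'$, the same pullback argument, and the same computations for items (1) and (2), with (2) using the Frobenius-type consequence of modularity. The one real problem is that the step you flag as the main technical obstacle is not an obstacle.

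The inequality $\img_u=\gr{u}\rconv\rcomp\gr{u}\order\rid_X$ holds for every arrow $u$, simply because graphs are functional. This is the reason the image of any arrow is a predicate at all. You already use this fact for $\gr{f}$ in your first step, and implicitly for $\gr{v}$ when you write $\img_{k\circ v}\order\img_k$. So no modularity and no reformulation are needed here. Your correct bound $\img_u\order\gr{f}\rcomp\img_{f\circ u}\rcomp\gr{f}\rconv\order\gr{f}\rcomp\preda\rcomp\gr{f}\rconv$, together with $\img_u\order\rid_X$, gives $\img_u\order\preda'$ immediately. This is exactly what the paper does. The equivalence you mention, $\img_u\order\preda'$ iff $\rid_Z\order\gr{u}\rcomp\preda'\rcomp\gr{u}\rconv$, is just the adjunction $\Ex\reidx{u,u}\dashv\RDoc\reidx{u,u}$. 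It holds in any relational doctrine and does not need modularity either.

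For item (3) you take a genuinely different, valid route. You invoke pullback-stability of the right class of the orthogonal factorization system of \cref{prop:fs-cq}, which applies once the square is known to be a pullback; closure under composition is not actually used. The paper instead checks the universal property directly. Since $f\circ k'$ is a comprehension arrow, any $g$ with $\img_g\order\img_{f'}$ has $k\circ g$ factoring through $f\circ k'$. Cancelling the monic $k$ then gives the factorization through $f'$. Your argument is shorter and more conceptual but depends on the factorization-system result, which is itself obtained by duality from an external reference. The paper's argument is self-contained.

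Your aside that this also yields an alternative route to (1) is not right. The class $\ct{K}_\RDoc$ consists of comprehension arrows, not all $\RDoc$-injective arrows. Pullback-stability therefore only covers the case where $f$ is itself a comprehension arrow. For a general injective $f$ you need the direct computation you gave.
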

\begin{proof}
Let $\preda$ be the predicate on $X$ defined by $\preda = (\gr{f}\rcomp\gr{k}\rconv\rcomp\gr{k}\rcomp\gr{f}\rconv)\land \rid_X$ and let $\fun{k'}{P}{X}$ be the comprehension of $\preda$. 
Observe that we  have 
$ \gr{f}\rconv\rcomp\gr{k'}\rconv\rcomp\gr{k'}\rcomp\gr{f} 
  \order \gr{f}\rconv\rcomp\gr{f}\rcomp\gr{k}\rconv\rcomp\gr{k}\rcomp\gr{f}\rconv\rcomp\gr{f} 
  \order \gr{k}\rconv\rcomp\gr{k}$, 
hence, because $k$ is a comprehension arrow, we get a unique arrow $\fun{f'}{P}{A}$ such that $k\circ f'  = f\circ k'$. 
To prove this is a pullback, consider two arrows $g$ and $h$ such that $k\circ h = f\circ g$. 
Since $\gr{g}\rconv\rcomp\gr{g} \order \rid_X$ and 
$ \gr{g}\rconv\rcomp\gr{g}
  \order \gr{f}\rcomp\gr{k}\rconv\rcomp\gr{h}\rconv\rcomp\gr{h}\rcomp\gr{k}\rcomp\gr{f}\rconv 
  \order \gr{f}\rcomp\gr{k}\rconv\rcomp\gr{k}\rcomp\gr{f}\rconv$, 
we derive $\gr{g}\rconv\rcomp\gr{g} \order \preda$ and so, because $k'$ is a comprehension arrow, we get a unique arrow $p$ such that $k'\circ p = g$. 
Then, it is easy to see that $k\circ f'\circ p = k\circ h$, hence, because $k$ is monic, we conclude also that $h = f'\circ p$, as needed. 

\begin{enumerate}
\item If $f$ is injective, we know that $\gr{f}\rcomp\gr{f}\rconv = \rid_X$. 
Since both $k$ and $k'$ are injective, as they are comprehension arrows,  from the equality $f\circ k' = k \circ f'$  we deduce 
\begin{align*}
\gr{f'}\rcomp\gr{f'}\rconv 
  &= \gr{f'}\rcomp\gr{k}\rcomp\gr{k}\rconv \rcomp\gr{f'}\rconv 
   = \gr{k'}\rcomp\gr{f} \rcomp \gr{f}\rconv\rcomp\gr{k}\rconv 
   = \rid_P 
\end{align*}
\item If $f$ is surjective, we know that $\gr{f}\rconv\rcomp\gr{f} = \rid_Y$. 
Again, since both $k$ and $k'$ are injective, we know that 
$\gr{f'} = \gr{k'}\rcomp\gr{f}\rcomp\gr{k}\rconv$. Then,  we have 
\begin{align*}
\gr{f'}\rconv\rcomp\gr{f'} 
  &= \gr{k}\rcomp\gr{f}\rconv\rcomp\gr{k'}\rconv\rcomp\gr{k'}\rcomp\gr{f}\rcomp\gr{k}\rconv  \\
  &= \gr{k}\rcomp\gr{f}\rconv\rcomp((\gr{f}\rcomp\gr{k}\rconv\rcomp\gr{k}\rcomp\gr{f}\rconv)\land\rid_X)\rcomp\gr{f}\rcomp\gr{k}\rconv \\
  &= \gr{k}\rcomp((\gr{k}\rconv\rcomp\gr{k})\land(\gr{f}\rconv\rcomp\gr{f}))\rcomp\gr{k}\rconv \\
  &= \gr{k}\rcomp((\gr{k}\rconv\rcomp\gr{k})\land\rid_Y)\rcomp\gr{k}\rconv \\
  &= \gr{k}\rcomp\gr{k}\rconv\rcomp\gr{k}\rcomp\gr{k}\rconv 
   = \rid_A
\end{align*}
where we have used the equality $\gr{f}\rconv\rcomp((\gr{f}\rcomp\relr\rcomp\gr{f}\rconv)\land\rels)\rcomp\gr{f} = \relr\land(\gr{f}\rconv\rcomp\rels\rcomp\gr{f})$, known as Frobenius reciprocity, which is a consequence of the modularity law (see \cite{DagninoP25apal}). 
\item If $f$ is a comprehension arrow, also the composition $f\circ k'$ is a comprehension arrow. 
Consider an arrow $g$ such that $\gr{g}\rconv\rcomp\gr{g}\order \gr{f'}\rconv\rcomp\gr{f'}$. 
Then, we have 
\begin{align*}
\gr{k}\rconv\rcomp\gr{g}\rconv\rcomp\gr{g}\rcomp\gr{k}
  &\order \gr{k}\rconv\rcomp\gr{f'}\rconv\rcomp\gr{f'}\rcomp\gr{k} 
   = \gr{f}\rconv\rcomp\gr{k'}\rconv\rcomp\gr{k'}\rcomp\gr{f} 
\end{align*}
Hence, we get a unique arrow $h$ such that $k\circ g = f\circ k'\circ h$. 
This implies that $k\circ g = k\circ f'\circ h$ and so, because $k$ is monic, we conclude 
$g = f'\circ h$, as needed. 
\end{enumerate}
\end{proof}

The next proposition shows that modularity is preserve by both the (intensional) quotient and the comprehension completions. 

\begin{proposition}
If $\RDoc$ is a modular relational doctrine, then both $\QR\RDoc$ and $\CR\RDoc$ are modular. 
\end{proposition}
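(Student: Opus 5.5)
Both completions keep the relational composition and converse of $\RDoc$ and change only the fibres (and the identity relations). So the modular law, which mentions only composition, converse and binary meets, will transfer from $\RDoc$ as soon as we know that binary meets in the new fibres are computed as in $\RDoc$. The plan is therefore to show, for each completion, that the fibres are closed under the binary meets of $\RDoc$ and have a top element, and that reindexing preserves these meets and top; this gives fibered finite meets. The law of \cref{def:modular} then holds verbatim, because every term in it is computed as in $\RDoc$. The top element does not occur in the law, so it may differ from the top of $\RDoc$.

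\emph{Quotient completion.} The fibre $\QR\RDoc(\ple{X,\eqrelr},\ple{Y,\eqrels})$ consists of the descent data, i.e.\ the $\relr\in\RDoc(X,Y)$ with $\eqrelr\rcomp\relr\rcomp\eqrels\order\relr$.
\begin{itemize}
\item If $\relr$ and $\rels$ are descent data, then $\eqrelr\rcomp(\relr\land\rels)\rcomp\eqrels\order\relr\land\rels$ by monotonicity of composition, so $\relr\land\rels$ is a descent datum.
\item $\top_{X,Y}$ is trivially a descent datum, so it is the top of the fibre.
\item Reindexing along arrows of $\QC\RDoc$ is the restriction of $\RDoc\reidx{f,g}$, which preserves finite meets by assumption.
\end{itemize}
Composition and converse in $\QR\RDoc$ are those of $\RDoc$, so the modular law is inherited directly.

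\emph{Comprehension completion.} The fibre $\CR\RDoc(\ple{X,\preda},\ple{Y,\predb})$ consists of the $\relr$ with $\relr\order\preda\rcomp\relr\rcomp\predb$. By \cref{prop:pred-modular}, $\preda$ and $\predb$ are coreflexive, so the reverse inequality always holds and these are exactly the fixed points $\relr=\preda\rcomp\relr\rcomp\predb$.

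The key auxiliary fact, which I expect to be the main obstacle, is this: for a coreflexive $\preda$ one has $\preda\rcomp(\relr\land\rels)=(\preda\rcomp\relr)\land\rels$, and dually on the right. I would prove it from the modular law in the form of \cref{prop:ra-modular}, $(\preda\rcomp\relr)\land\rels\order\preda\rcomp(\relr\land\preda\rconv\rcomp\rels)\order\preda\rcomp(\relr\land\rels)$, using $\preda\rconv\order\rid$. The reverse inequality is immediate from coreflexivity. The version of modularity in \cref{def:modular} is stated with the composite on the other side, so one applies it to converses; this bookkeeping is the delicate part.

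Given this fact, for fixed points $\relr$ and $\rels$ we compute $\preda\rcomp(\relr\land\rels)\rcomp\predb=(\preda\rcomp\relr\rcomp\predb)\land\rels=\relr\land\rels$, so the fibre is closed under meets. Its top is $\preda\rcomp\top\rcomp\predb$. Reindexing is $\CR\RDoc\reidx{f,g}(\relr)=\preda\rcomp\RDoc\reidx{f,g}(\relr)\rcomp\preda'$. Since $\RDoc\reidx{f,g}$ preserves meets, the same lemma shows that this map preserves binary meets and sends the top to the top. Composition and converse coincide with those of $\RDoc$, so the modular law again transfers, and both $\QR\RDoc$ and $\CR\RDoc$ are modular.
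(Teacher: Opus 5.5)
Your proposal is correct and is essentially the paper's proof. You reduce modularity of both completions to the fibres being closed under the meets of $\RDoc$ and having a top, handle descent data exactly as the paper does, and for codescent data use the same ingredients (modular law plus coreflexivity of predicates), packaged as the lemma $\preda\rcomp(\relr\land\rels)=(\preda\rcomp\relr)\land\rels$ instead of the paper's single chain through $\preda\rcomp((\relr\rcomp\predb\rconv)\land(\preda\rconv\rcomp\rels))\rcomp\predb$.

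One small remark: your lemma does not by itself show that $\preda\rcomp\RDoc\reidx{f,g}(\blank)\rcomp\preda'$ preserves the top, since the top of the source fibre is $\predb\rcomp\top\rcomp\predb'$ and that step needs the arrow conditions on $f,g$. All such reindexing checks are superfluous anyway, because reindexing in any relational doctrine has a left adjoint and therefore preserves whatever meets exist in the fibres.
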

\begin{proof}
It is enough to check that $\QR\RDoc(\ple{X,\eqrelr},\ple{Y,\eqrels})$ and $\CR\RDoc(\ple{X,\preda},\ple{Y,\predb})$ are closed under binary meets and have a top element preserved by reindexing.  
For $\QR\RDoc$ we have 
$\eqrelr\rcomp\top\rcomp\eqrels \order\top$ and 
$ \eqrelr \rcomp (\relr\land\rels) \rcomp \eqrels 
  \order (\eqrelr\rcomp\relr\rcomp\eqrels) \land (\eqrelr\rcomp\rels\rcomp\eqrels) 
  \order \relr\land\rels$, 
where $\relr,\rels \in \QR\RDoc(\ple{X,\eqrelr},\ple{Y,\eqrels})$. 

For $\CR\RDoc$, given $\relr,\rels\in\CR\RDoc(\ple{X,\preda},\ple{Y,\predb})$, we have 
$\relr \order \preda\rcomp\relr\rcomp\predb \order \preda\rcomp\top\rcomp\predb$, which thus is the top element, and 
$ \relr\land\rels 
  \order (\preda\rcomp\relr)\land (\rels\rcomp\predb) 
  \order \preda \rcomp ((\relr\rcomp\predb\rconv)\land(\preda\rconv\rcomp\rels))\rcomp\predb 
  \order \preda\rcomp(\relr\land\rels)\rcomp\predb$, 
as needed. 
\end{proof}

Finally, we show that on modular relational doctrines the (intensional) quotient completion preserves comprehensions. 

\begin{theorem} \label[thm]{thm:qc-modular} 
If $\RDoc$ is a modular relational doctrine with comprehensions, then 
$\QR\RDoc$ has comprehensions. 
\end{theorem}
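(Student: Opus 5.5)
We build the comprehension in $\QR\RDoc$ from the comprehension in $\RDoc$ of the coreflexive part of the given predicate. Recall that an object of $\QC\RDoc$ is a pair $\ple{X,\eqrelr}$ with $\eqrelr$ an $\RDoc$-equivalence relation on $X$. Relations $\ple{X,\eqrelr}\to\ple{Y,\eqrels}$ are descent data $\relr$, i.e.\ those with $\eqrelr\rcomp\relr\rcomp\eqrels\order\relr$. The identity on $\ple{X,\eqrelr}$ is $\eqrelr$, and the $\QR\RDoc$-graph of an arrow $f$ is $\gr{f}\rcomp\eqrels$.

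Let $\preda$ be a $\QR\RDoc$-predicate on $\ple{X,\eqrelr}$. Then $\preda\order\eqrelr$, $\preda\rconv\order\preda$ and $\preda\order\preda\rcomp\preda$. Moreover $\preda$ is a descent datum, so $\eqrelr\rcomp\preda\rcomp\eqrelr=\preda$. By \cref{prop:pred-modular}, $\predb=\preda\land\rid_X$ is an $\RDoc$-predicate on $X$. Let $\fun{k}{A}{X}$ be its (full, injective, strong) $\RDoc$-comprehension arrow. Put $\eqrelr_A=\gr{k}\rcomp\eqrelr\rcomp\gr{k}\rconv$, which is an equivalence relation since graphs are functional and total. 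The candidate is $\fun{k}{\ple{A,\eqrelr_A}}{\ple{X,\eqrelr}}$, which is a well-defined arrow by construction.

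\textbf{Image and fullness.} The $\QR\RDoc$-image of $k$ is $\eqrelr\rcomp\gr{k}\rconv\rcomp\gr{k}\rcomp\eqrelr=\eqrelr\rcomp\predb\rcomp\eqrelr$, using fullness of $k$ in $\RDoc$. It is at most $\eqrelr\rcomp\preda\rcomp\eqrelr=\preda$.

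The main obstacle is the reverse inequality $\preda\order\eqrelr\rcomp\predb\rcomp\eqrelr$. This is exactly where modularity is needed, since \cref{ex:controcontro} shows fullness fails in the non-modular case. I plan to proceed in two steps.
\begin{itemize}
\item First, $\preda$ is idempotent and "symmetric-transitive": $\preda\rcomp\preda\order\preda\rcomp\eqrelr\order\eqrelr\rcomp\preda\rcomp\eqrelr=\preda$. Hence $\preda\rcomp\preda\rconv=\preda\rcomp\preda=\preda$.
\item Second, apply the modular law with $\relr=\rid_X$, $\relt=\preda$, $\rels=\preda$:
\[
\preda=(\rid_X\rcomp\preda)\land\preda\order(\rid_X\land\preda\rcomp\preda\rconv)\rcomp\preda=(\rid_X\land\preda)\rcomp\preda\order\predb\rcomp\eqrelr\order\eqrelr\rcomp\predb\rcomp\eqrelr .
\]
\end{itemize}
So $k$ is full.

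\textbf{Injectivity.} We have $\gr{k}\rcomp\eqrelr\rcomp\eqrelr\rcomp\gr{k}\rconv=\eqrelr_A$, which is the identity on $\ple{A,\eqrelr_A}$, so $k$ is $\QR\RDoc$-injective.

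\textbf{Universality.} Let $\fun{f}{\ple{Y,\eqrels}}{\ple{X,\eqrelr}}$ have image below $\preda$. Equivalently, $\eqrels\order\gr{f}\rcomp\preda\rcomp\gr{f}\rconv$, as in \cref{ex:controcontro}. We show that $\img_f\order\predb$ in $\RDoc$:
\begin{itemize}
\item $\gr{f}\rconv\rcomp\gr{f}\order\rid_X$ since $\gr{f}$ is functional;
\item $\gr{f}\rconv\rcomp\gr{f}\order\gr{f}\rconv\rcomp\eqrels\rcomp\gr{f}\order\gr{f}\rconv\rcomp\gr{f}\rcomp\preda\rcomp\gr{f}\rconv\rcomp\gr{f}\order\preda$.
\end{itemize}
Hence there is a unique $\fun{g}{Y}{A}$ in $\CC$ with $k\circ g=f$. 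It is an arrow $\ple{Y,\eqrels}\to\ple{A,\eqrelr_A}$, because $\eqrels\order\gr{f}\rcomp\eqrelr\rcomp\gr{f}\rconv=\gr{g}\rcomp\eqrelr_A\rcomp\gr{g}\rconv$. Uniqueness in $\QC\RDoc$ follows because its arrows are arrows of $\CC$ and $k$ is monic there. This gives full, injective, strong comprehensions, as required.
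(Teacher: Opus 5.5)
Your proof is correct and follows the paper's construction step for step. You comprehend $\preda\land\rid_X$ in $\RDoc$, put $\gr{k}\rcomp\eqrelr\rcomp\gr{k}\rconv$ on the domain, factor $f$ through $k$ in $\CC$, and obtain fullness from the modular law with $\relr=\rid_X$; the only differences are that you instantiate the modular law with $\relt=\preda$ and use $\preda\rcomp\preda\rconv=\preda$ where the paper takes $\relt=\eqrelr$ and uses $\preda\rcomp\eqrelr=\preda$, and that you make explicit the inequality $\img_k\order\preda$, which the paper leaves implicit.
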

\begin{proof}
Let $\preda$ be a $\QR\RDoc$-predicate on $\ple{X,\eqrelr}$. 
Then, $\hat\preda = \preda\land\rid_X$, by \cref{prop:pred-modular}, is an $\RDoc$-predicate on $X$. 
Let $\fun{k}{A}{X}$ be an $\RDoc$-comprehension arrow for $\hat\preda$ and consider the relation $\eqrelr_\preda = \gr{k}\rcomp\eqrelr\rcomp\gr{k}\rconv$, which is an $\RDoc$-equivalence relation on $X$. 
Then, the arrow $\fun{k}{\ple{A,\eqrelr_\preda}}{\ple{X,\eqrelr}}$ is a well-defined arrow in $\QC\RDoc$. 
We claim it is an $\QR\RDoc$-comprehension arrow for $\preda$. 
Consider an arrow $\fun{f}{\ple{Y,\eqrels}}{\ple{X,\eqrelr}}$ such that $\eqrels \order \gr{f}\rcomp\preda\rcomp\gr{f}\rconv$,  
then, since $\rid_Y\order\eqrels$, we derive 
$\gr{f}\rconv\rcomp\gr{f} \order \preda$ and, since $\gr{f}\rconv\rcomp\gr{f}\order \rid_X$, we conclude 
$\gr{f}\rconv\rcomp\gr{f}\order \preda\land\rid_X = \hat\preda$. 
Because $\fun{k}{A}{X}$ is a comprehension arrow, we get a unique $\fun{g}{Y}{A}$ such that $f = k\circ g$. 
Since we have $\eqrels \order \gr{f}\rcomp\eqrelr\rcomp\gr{f}\rconv = \gr{g}\rcomp \eqrelr_\preda \rcomp \gr{g}\rconv$, we conclude that 
$\fun{g}{\ple{Y,\eqrels}}{\ple{A,\eqrelr_\preda}}$ is a well-defined arrow in $\QC\RDoc$, as needed. 

The arrow $k$ is obviously $\QR\RDoc$-injective by construction. 
To prove it is a full comprehension arrow, we have to  show that $\preda$ is below the image of $k$ that is the relation 
$\eqrelr\rcomp\hat\preda\rcomp\eqrelr\rcomp\hat\preda\rcomp\eqrelr$. 
To this end, it suffices to check that 
$\preda\order \eqrelr\rcomp\hat\preda\rcomp\eqrelr$. 
Using the fact that $\preda$ is a descent datum ($\eqrelr\rcomp\preda\rcomp\eqrelr\order\preda$) and is coreflexive ($\preda\order\eqrelr$), we have the following 
\begin{align*}
\preda &\order \eqrelr\rcomp\preda 
       &\order \eqrelr\rcomp(\preda\land\eqrelr) 
       &\order \eqrelr\rcomp((\preda\rcomp\eqrelr\rconv)\land\rid_X)\rcomp\eqrelr \\
       &\order \eqrelr\rcomp(\preda\land\rid_X)\rcomp\eqrelr 
       &= \eqrelr\rcomp\hat\preda\rcomp\eqrelr 
\end{align*}
\end{proof}

These results ensure that the 2-monads $\QMnd$ and $\CMnd$ restrict to the full 2-subcaegory of $\RDtn$ on modular relational doctrines and on it the composition $\QMnd\circ\CMnd$ is again a 2-monad whose pseudoalgebras are modular relational doctrines with both quotients and comprehensions. 
This result applies to a wide range of examples including relational doctrines of spans over categories with weak finite limits and  
those coming from existential elementary doctrines. 

We now give a a more explicit description of the relational doctrine $\QR{\CR\RDoc}$ for a modular relational doctrine $\RDoc$. 
More precisely, we will show that the base category $\QC{\CR\RDoc}$ is isomorphic to a category of partial equivalence relations, thus generalizing known results obtained for existential elementary doctrines \cite{MaiettiME:eleqc} (which in turn generalises the construction of q-toposes from triposes given in \cite{Frey2023Categories}). 

Let $\fun\RDoc{\bop\CC}{\Pos}$ be a relational doctrine. 
The category $\PERC\RDoc$ of partial equivalence relations in $\RDoc$ is defined as follows: 
\begin{itemize}
\item an object in $\PERC\RDoc$ is a pair $\ple{X,\eqrelr}$ where $X$ is an object of $\CC$ and $\eqrelr$ is a partial equivalence relation on $X$, \ie it satisfies 
$\eqrelr\order\eqrelr\rconv$ and $\eqrelr\rcomp\eqrelr\order\eqrelr$, 
\item an arrow $\fun{f}{\ple{X\eqrelr}}{\ple{Y,\eqrels}}$ in $\PERC\RDoc$ is an arrow $\fun{f}{X}{Y}$ in $\CC$ such that 
$\eqrelr\order\gr{f}\rcomp\eqrels\rcomp\gr{f}\rconv$. 
\end{itemize}

\begin{proposition} \label[prop]{prop:per} 
Let $\fun\RDoc{\bop\CC}{\Pos}$ be a modular relational doctrine. 
Then, the categories $\PERC\RDoc$ and $\QC{\CR\RDoc}$ are isomorphic via a functor
$\fun{F}{\PERC\RDoc}{\QR{\CR\RDoc}}$ such that 
$F\ple{X,\eqrelr} = \ple{\ple{X,\eqrelr\land\rid_X},\eqrelr}$. 
\end{proposition}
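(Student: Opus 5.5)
The plan is to build $F$ as the identity on underlying arrows of $\CC$ and to give an explicit inverse on objects, $G\ple{\ple{X,\preda},\eqrelr} = \ple{X,\eqrelr}$. Most of the work is verifying that both assignments are well defined on objects, and that the arrow conditions of $\PERC\RDoc$ and of $\QC{\CR\RDoc}$ coincide for a fixed $\fun{f}{X}{Y}$. First I unfold the target. An object of $\QC{\CR\RDoc}$ is a pair $\ple{\ple{X,\preda},\eqrelr}$ with $\preda\in\Pred\RDoc(X)$ and $\eqrelr\in\RDoc(X,X)$ satisfying three conditions: $\eqrelr\order\preda\rcomp\eqrelr\rcomp\preda$ (codescent), $\preda\order\eqrelr$ (reflexivity, since $\rid_{\ple{X,\preda}}=\preda$), and symmetry and transitivity. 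An arrow to $\ple{\ple{Y,\predb},\eqrels}$ is $\fun{f}{X}{Y}$ such that $\preda\order\gr{f}\rcomp\predb\rcomp\gr{f}\rconv$ and $\eqrelr\order\preda\rcomp\gr{f}\rcomp\eqrels\rcomp\gr{f}\rconv\rcomp\preda$.

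For $F$ on objects, take a PER $\eqrelr$. By \cref{prop:pred-modular}, $\preda=\eqrelr\land\rid_X$ is a predicate, and clearly $\preda\order\eqrelr$. The key step is $\eqrelr\order\preda\rcomp\eqrelr\rcomp\preda$. I would first apply the modular law of \cref{def:modular} to $\rid_X\rcomp\eqrelr\land\eqrelr$, which gives
\[\eqrelr\order(\rid_X\land\eqrelr\rcomp\eqrelr\rconv)\rcomp\eqrelr\order(\rid_X\land\eqrelr)\rcomp\eqrelr,\]
using $\eqrelr\rcomp\eqrelr\rconv\order\eqrelr$ from symmetry and transitivity. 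I would then apply the converse of this inequality to get the factor $\preda$ on the right. Hence $F$ lands in objects.

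For $G$, take an object $\ple{\ple{X,\preda},\eqrelr}$. It is immediate that $\eqrelr$ is a PER. For $GF=\Id$ and $FG=\Id$ on objects, I must show $\preda=\eqrelr\land\rid_X$. The inequality $\preda\order\eqrelr\land\rid_X$ follows from reflexivity and coreflexivity. For the converse I would use the allegory form of modularity, $\relr\rcomp\relt\land\rels\order\relr\rcomp(\relt\land\relr\rconv\rcomp\rels)$, which is the converse of \cref{def:modular}. This gives
\[\eqrelr\land\rid_X\order\preda\rcomp(\eqrelr\rcomp\preda)\land\rid_X\order\preda\rcomp\bigl((\eqrelr\rcomp\preda)\land\preda\bigr)\order\preda\rcomp\rid_X=\preda,\]
using $\preda\rconv=\preda\order\rid_X$.

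For arrows, let $f$ be a $\PERC\RDoc$-arrow, so $\eqrelr\order\gr{f}\rcomp\eqrels\rcomp\gr{f}\rconv$. The second condition in the target is then obtained by sandwiching with $\preda$, using $\eqrelr\order\preda\rcomp\eqrelr\rcomp\preda$. The first condition is $\eqrelr\land\rid_X\order\gr{f}\rcomp(\eqrels\land\rid_Y)\rcomp\gr{f}\rconv$. I expect this to be the main obstacle, and I would try to reduce it to the Frobenius-type equality $(\gr{f}\rcomp\relr\rcomp\gr{f}\rconv)\land\rid_X\order\gr{f}\rcomp(\relr\land\rid_Y)\rcomp\gr{f}\rconv$. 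Concretely, I would apply modularity twice, first pulling out $\gr{f}$ on the left and then $\gr{f}\rconv$ on the right. At each step I would use that $\gr{f}$ is functional ($\gr{f}\rconv\rcomp\gr{f}\order\rid_Y$) to bound the residual meet by $\rid_Y$; alternatively I would invoke the Frobenius reciprocity already cited from \cite{DagninoP25apal}. Conversely, a $\QC{\CR\RDoc}$-arrow yields $\eqrelr\order\preda\rcomp\gr{f}\rcomp\eqrels\rcomp\gr{f}\rconv\rcomp\preda\order\gr{f}\rcomp\eqrels\rcomp\gr{f}\rconv$, since $\preda\order\rid_X$. Functoriality is trivial because $F$ and $G$ act as the identity on arrows, so they are mutually inverse functors.
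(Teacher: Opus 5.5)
Your proof is correct and follows the paper's own argument. You use the same modularity computations: $\eqrelr\order(\eqrelr\land\rid_X)\rcomp\eqrelr$ and its converse give well-definedness on objects, and $\eqrelr\land\rid_X\order\preda$ gives bijectivity on objects. Your explicit inverse $G$ plays the role of the paper's ``fully faithful and bijective on objects'' conclusion.

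You go beyond the paper in the arrow-level check, which it dismisses as easy. Your reduction of $\eqrelr\land\rid_X\order\gr{f}\rcomp(\eqrels\land\rid_Y)\rcomp\gr{f}\rconv$ to two applications of modularity works. One small correction: only the second application uses functionality $\gr{f}\rconv\rcomp\gr{f}\order\rid_Y$, since the first just leaves the residual $\gr{f}\rconv\rcomp\rid_X=\gr{f}\rconv$.
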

\begin{proof}
First observe that, since $\RDoc$ is modular, $\eqrelr\land\rid_X$ is an $\RDoc$-predicate on $X$ and so $\ple{X,\eqrelr\land\rid_X}$ is an object in  the base of $\CR\RDoc$. 
Moreover, we have 
$ \eqrelr 
  = \eqrelr\land\eqrelr 
  \order \eqrelr\rcomp((\eqrelr\rconv\rcomp\eqrelr)\land\rid_X) 
  \order \eqrelr\rcomp(\eqrelr\land\rid_X)$. 
Since both $\eqrelr$ and $\eqrelr\land\rid_X$ are symmetric, this also implies that 
$\eqrelr\order (\eqrelr\land\rid_X)\rcomp\eqrelr$, 
thus proving that $\eqrelr$ is a codescent datum for $\eqrelr\land\rid_X$. 
Finally, since $\eqrelr\land\rid_X\order\eqrelr$, we hav ethat 
$\eqrelr$ is a $\CR\RDoc$-equivalence relation on $\ple{X,\eqrelr\land\rid_X}$ and so 
$\ple{\ple{X,\eqrelr\land\rid_X},\eqrelr}$ is a well-defined object of $\QC{\CR\RDoc}$. 

It is easy to verify that the assignment $F\ple{X\eqrelr} = \ple{\ple{X,\eqrelr\land\rid_X},\eqrelr}$ extends to a fully faithful functor 
$\fun{F}{\PERC\RDoc}{\QC{\CR\RDoc}}$. 
Therefore, to conclude it suffices to check that 
for every object $\ple{\ple{X,\preda},\eqrelr}$ in $\QC{\CR\RDoc}$, there is a partial $\RDoc$-equivalence relation $\eqrelr'$ on $X$ such that 
$F\ple{X,\eqrelr'} = \ple{\ple{X,\preda},\eqrelr}$, that is, $\eqrelr' = \eqrelr$ and $\preda = \eqrelr\land\rid_X$. 
First, observe that, since $\eqrelr$ is a $\CR\RDoc$-equivalence relation, it is symmetric and transitive and so 
it is also a partial $\RDoc$-equivalence relation. 
Hence, it remains to show that $\preda = \eqrelr\land\rid_X$. 
To this end, note that, 
because $\preda$ is coreflexive in $\RDoc$ and $\eqrelr$ is reflexive in $\CR\RDoc$, we have 
$\preda\order\rid_X$ and $\preda\order\eqrelr$, thus $\preda\order\eqrelr\land\rid_X$. 
Conversely, since $\eqrelr$ is a codescent datum, we have 
$ \rid_X\land\eqrelr
  \order \rid_X \land (\eqrelr\rcomp\preda) 
  \order (\preda\rconv\land\eqrelr)\rcomp \preda 
  = \preda\rcomp\preda 
  =\preda$, 
because $\preda\order\eqrelr$ and $\preda = \preda\rconv$ imply $\preda\rconv\land\eqrelr = \preda$. 
\end{proof}

This allows us to define by change-of-base a relational doctrine $\PERR\RDoc$ on $\PERC\RDoc$ that will be isomorphic to $\QR{\CR\RDoc}$. 
Explicitly, a relation in $\PERR\RDoc(\ple{X,\eqrelr},\ple{Y,\eqrels})$ is a relation $\relr$ in $\RDoc(X,Y)$ such that 
$\eqrelr\rcomp\relr\rcomp\eqrels \order\relr$ and $\relr \order (\eqrelr\land\rid_X)\rcomp\relr\rcomp(\eqrels\land\rid_Y)$, 
relational composition and converse are those of $\RDoc$ and the identity relation on $\ple{X,\eqrelr}$ is $\rid_{\ple{X,\eqrelr}} = \eqrelr$.

We proceed by studying 
another sufficient condition that will ensure that we can compose the completions the other way round. 
We say that a relational doctrine $\RDoc$ is \emph{(join) distributive} if 
every fiber has finite joins and relational composition distributes over them, that is, 
for all relations $\relr,\rels,\relt_1,\relt_2$ 
\[
\relr\rcomp\bot\rcomp\rels = \bot 
\qquad 
\relr\rcomp(\relt_1\lor\relt_2)\rcomp\rels = (\relr\rcomp\relt_1\rcomp\rels)\lor(\relr\rcomp\relt_2\rcomp\rels)
\]
Note that, because both reindexing and its left adjoint can be expressed  using relational composition, 
this condition ensures that both of them preserve finite joins. 
It is easy to see that in a distributive relational doctrine, if $\relr$ is a symmetric and transitive relation on $X$, 
then $\relr\lor\rid_X$ is an equivalence relation on $X$. 

In a distributive relational doctrine, we can prove  a result which is close to a dual of \cref{prop:c-mod-pullback}, even if it is actually weaker. 

\begin{proposition}
Let $\fun{\RDoc}{\bop\CC}{\Pos}$ be a distributive relational doctrine with quotients. 
For every $\RDoc$-injective arrow $\fun{f}{X}{Y}$ and every $\RDoc$-quotient arrow $\fun{q}{X}{W}$, 
the pushout of $q$ along $f$ exists 
\[\xymatrix{
  X \ar[r]^-{f} \ar[d]^-{q} 
& Y \ar[d]^-{q'} \\
  W \ar[r]^-{f'}
& P 
}\]
where $\fun{q'}{Y}{P}$ is an $\RDoc$-quotient arrow and $\fun{f'}{W}{P}$ is $\RDoc$-injective. 
\end{proposition}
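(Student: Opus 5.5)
Let $\eqrelr$ be the $\RDoc$-equivalence relation on $X$ with quotient $q$; since $\RDoc$ has quotients we may take $q$ effective and surjective, so $\ker_q = \eqrelr$. (If $q$ is an arbitrary quotient arrow for $\eqrelr$, it is isomorphic to the effective surjective one by uniqueness.) The plan is to push $\eqrelr$ forward along $f$ and close it up to an equivalence relation on $Y$ using joins. Concretely, set
\[
\eqrels \;=\; \Ex\reidx{f,f}(\eqrelr)\lor\rid_Y \;=\; (\gr{f}\rconv\rcomp\eqrelr\rcomp\gr{f})\lor\rid_Y .
\]
First check that $\eqrels$ is an equivalence relation. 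The relation $\gr{f}\rconv\rcomp\eqrelr\rcomp\gr{f}$ is symmetric because $\eqrelr$ is. It is transitive because $\gr{f}\rcomp\gr{f}\rconv = \rid_X$ ($f$ is $\RDoc$-injective), so its composite with itself collapses to $\gr{f}\rconv\rcomp\eqrelr\rcomp\eqrelr\rcomp\gr{f}\order\gr{f}\rconv\rcomp\eqrelr\rcomp\gr{f}$. By the remark on distributive doctrines, joining with $\rid_Y$ then yields an equivalence relation. Let $\fun{q'}{Y}{P}$ be its effective surjective quotient arrow.

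Next construct $f'$. We have $\gr{f}\rcomp\eqrels\rcomp\gr{f}\rconv \oporder \gr{f}\rcomp\gr{f}\rconv\rcomp\eqrelr\rcomp\gr{f}\rcomp\gr{f}\rconv = \eqrelr$, so $\eqrelr\order\ker_{q'\circ f}$. The universal property of $q$ then gives a unique $f'$ with $f'\circ q = q'\circ f$.

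For the pushout property, take $\fun{g}{W}{Z}$ and $\fun{h}{Y}{Z}$ with $g\circ q = h\circ f$. It suffices to show $\eqrels\order\ker_h$; then $h$ factors uniquely through $q'$ as $h = u\circ q'$. Since $\ker_h$ contains $\rid_Y$, we only need $\gr{f}\rconv\rcomp\eqrelr\rcomp\gr{f}\order\gr{h}\rcomp\gr{h}\rconv$. This follows from $\eqrelr\order\ker_q\order\ker_{g\circ q} = \ker_{h\circ f} = \gr{f}\rcomp\gr{h}\rcomp\gr{h}\rconv\rcomp\gr{f}\rconv$, together with $\gr{f}\rconv\rcomp\gr{f}\order\rid_Y$ by functionality. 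Then $u\circ f'\circ q = u\circ q'\circ f = h\circ f = g\circ q$. Since $q$ is a quotient arrow, uniqueness in its universal property makes it epic, so $u\circ f' = g$. Uniqueness of $u$ follows because $q'$ is likewise epic.

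The main obstacle is showing that $f'$ is $\RDoc$-injective, i.e.\ $\gr{f'}\rcomp\gr{f'}\rconv\order\rid_W$. I would use surjectivity of $q$ to write
\[
\gr{f'}\rcomp\gr{f'}\rconv = \gr{q}\rconv\rcomp\gr{q}\rcomp\gr{f'}\rcomp\gr{f'}\rconv\rcomp\gr{q}\rconv\rcomp\gr{q} = \gr{q}\rconv\rcomp\gr{f}\rcomp\ker_{q'}\rcomp\gr{f}\rconv\rcomp\gr{q},
\]
then use effectiveness $\ker_{q'} = \eqrels$ and distribute over the join:
\[
\gr{f}\rcomp\eqrels\rcomp\gr{f}\rconv = (\gr{f}\rcomp\gr{f}\rconv\rcomp\eqrelr\rcomp\gr{f}\rcomp\gr{f}\rconv)\lor(\gr{f}\rcomp\gr{f}\rconv) = \eqrelr\lor\rid_X = \eqrelr .
\]
Plugging this back in gives $\gr{q}\rconv\rcomp\eqrelr\rcomp\gr{q} = \gr{q}\rconv\rcomp\gr{q}\rcomp\gr{q}\rconv\rcomp\gr{q} = \rid_W$, as required. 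The delicate points are exactly where distributivity and the injectivity of $f$ are both needed. Note that this step does not establish surjectivity of $f'$, which is consistent with the statement being weaker than the dual of \cref{prop:c-mod-pullback}.
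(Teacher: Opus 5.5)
Your proof is correct and follows essentially the same route as the paper. You take $\eqrels = (\gr{f}\rconv\rcomp\ker_q\rcomp\gr{f})\lor\rid_Y$, get $f'$ from the universal property of $q$, and derive $\RDoc$-injectivity of $f'$ by distributing composition over the join, using surjectivity of $q$, effectiveness of $q'$ and injectivity of $f$; you also spell out two points the paper leaves implicit: that any quotient arrow is effective and surjective up to isomorphism, and that the mediating arrow is unique because $q'$ is epic.
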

\begin{proof}
Consider the relation $\relr \in \RDoc(Y,Y)$ given by $\gr{f}\rconv\rcomp\gr{q}\rcomp\gr{q}\rconv\rcomp\gr{f}$. 
It is clearly symmetric and, because $f$ is injective, it is also transitive, hence 
$\eqrelr = \relr \lor \rid_Y$ is an equivalence relation on $Y$. 
Let $\fun{q'}{Y}{P}$ be a quotient arrow of $\eqrelr$. 
Since we have 
\begin{align*}
\gr{q}\rcomp\gr{q}\rconv 
  &\order \gr{f}\rcomp\gr{f}\rconv\rcomp\gr{q}\rcomp\gr{q}\rconv\rcomp\gr{f}\rcomp\gr{f}\rconv 
   = \gr{f}\rcomp\relr\rcomp\gr{f}\rconv  \\  
  &\order \gr{f}\rcomp\eqrelr\rcomp\gr{f}\rconv 
   = \gr{f}\rcomp\gr{q'}\rcomp\gr{q'}\rconv\rcomp\gr{f}\rconv
\end{align*}
we get a unique arrow 
$\fun{f'}{W}{P}$ such that $f'\circ q = q'\circ f$, because $q$ is a quotient arrow. 

To prove this commutative square is a pushout, consider two arrow $g$ and $h$  such that 
$g\circ f = h\circ q$ and observe that 
$\relr \order \gr{f}\rcomp\gr{h}\rconv\rcomp\gr{h}\rcomp\gr{g}\rconv \order \gr{g}\rcomp\gr{g}\rconv$.
Then, since $\gr{g}\rcomp\gr{g}\rconv$ is reflexive, we derive 
$\eqrelr\order \gr{g}\rcomp\gr{g}\rconv$ and so, because $q'$ is a quotient arrow, we get a unique arrow  $p$ such that 
$g = p\circ q'$. 
Then, it is easy to see that 
$h\circ q = p \circ f' \circ q$ and so, because $q$ is epic, we conclude that $h = p\circ f'$ as needed. 

Finally, to verify that $f'$ is injective, observe that, because $q$ is surjective, we have 
$\gr{f'}= \gr{q}\rconv\rcomp\gr{f}\rcomp\gr{q'}$. 
Then, we deduce 
\begin{align*}
\gr{f'}\rcomp\gr{f'}\rconv 
  &= \gr{q}\rconv\rcomp\gr{f}\rcomp\gr{q'}\rcomp\gr{q'}\rconv\rcomp\gr{f}\rconv\rcomp\gr{q} 
   = \gr{q}\rconv\rcomp\gr{f}\rcomp(\relr\lor\rid_Y)\rcomp\gr{f}\rconv\rcomp\gr{q} \\
  &= (\gr{q}\rconv\rcomp\gr{f}\rcomp\relr\rcomp\gr{f}\rconv\rcomp\gr{q}) \lor 
     (\gr{q}\rconv\rcomp\gr{f}\rcomp\gr{f}\rconv\rcomp\gr{q}) \\
  &= (\gr{q}\rconv\rcomp\gr{f}\rcomp\gr{f}\rconv\rcomp\gr{q}\rcomp\gr{q}\rconv\rcomp\gr{f}\rcomp\gr{f}\rconv\rcomp\gr{q}) \lor 
     (\gr{q}\rconv\rcomp\gr{f}\rcomp\gr{f}\rconv\rcomp\gr{q}) \\
  &= (\gr{q}\rconv\rcomp\gr{q}\rcomp\gr{q}\rconv\rcomp\gr{q}) \lor (\gr{q}\rconv\rcomp\gr{q}) 
   = \rid_W
\end{align*}
\end{proof}

The next proposition shows that distributivity is preserve by both the (intensional) quotient and comprehension completions. 

\begin{proposition}
If $\RDoc$ is a distributive relational doctrine, then both $\QR\RDoc$ and $\CR\RDoc$ are distributive. 
\end{proposition}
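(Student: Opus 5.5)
Following the proof of the analogous statement for modularity, the plan is to show that each fibre of $\QR\RDoc$ and of $\CR\RDoc$ is closed under the finite joins computed in $\RDoc$, and that these joins agree with the ones there. Distributivity of composition over these joins is then inherited from $\RDoc$, because composition in both completions is the composition of $\RDoc$.

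\textbf{Quotient completion.} For $\QR\RDoc(\ple{X,\eqrelr},\ple{Y,\eqrels})$, i.e. descent data $\relr$ with $\eqrelr\rcomp\relr\rcomp\eqrels\order\relr$:
\begin{itemize}
\item \emph{Bottom.} By distributivity, $\eqrelr\rcomp\bot\rcomp\eqrels=\bot$, so $\bot$ is a descent datum.
\item \emph{Binary joins.} Again by distributivity,
\[
\eqrelr\rcomp(\relr\lor\rels)\rcomp\eqrels=(\eqrelr\rcomp\relr\rcomp\eqrels)\lor(\eqrelr\rcomp\rels\rcomp\eqrels)\order\relr\lor\rels .
\]
\end{itemize}
Hence the fibre is a sub-join-semilattice of $\RDoc(X,Y)$. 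Composition in $\QR\RDoc$ is that of $\RDoc$, so the equations $\relr\rcomp\bot\rcomp\rels=\bot$ and $\relr\rcomp(\relt_1\lor\relt_2)\rcomp\rels=(\relr\rcomp\relt_1\rcomp\rels)\lor(\relr\rcomp\relt_2\rcomp\rels)$ hold verbatim. Since reindexing in $\QR\RDoc$ is that of $\RDoc$ restricted to descent data, it is also immediate that reindexing preserves these joins.

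\textbf{Comprehension completion.} For $\CR\RDoc(\ple{X,\preda},\ple{Y,\predb})$, i.e. codescent data $\relr\order\preda\rcomp\relr\rcomp\predb$:
\begin{itemize}
\item \emph{Bottom.} $\bot\order\preda\rcomp\bot\rcomp\predb$ trivially.
\item \emph{Binary joins.} Using distributivity,
\[
\relr\lor\rels\order(\preda\rcomp\relr\rcomp\predb)\lor(\preda\rcomp\rels\rcomp\predb)=\preda\rcomp(\relr\lor\rels)\rcomp\predb .
\]
\end{itemize}
So the fibre is again closed under the joins of $\RDoc$, and these are its joins. Composition is that of $\RDoc$, so the two distributivity equations hold. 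Reindexing is $\CR\RDoc\reidx{f,g}(\relr)=\preda\rcomp\RDoc\reidx{f,g}(\relr)\rcomp\preda'$, which preserves finite joins because $\RDoc\reidx{f,g}$ does (as noted after the definition of distributivity) and composition distributes.

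\textbf{Main check.} The only point needing care is that the bottom and joins of the subfibre really coincide with those of $\RDoc$, i.e. that the fibres are closed under them. In the comprehension case this holds because the codescent condition is an upper-bound condition that is stable under joins, as shown above. Nothing else is nontrivial.
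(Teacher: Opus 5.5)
Your proposal is correct and follows essentially the same route as the paper: you show that the fibres of descent data in $\QR\RDoc$ and of codescent data in $\CR\RDoc$ are closed under the bottom and binary joins of $\RDoc$, so distributivity is inherited because composition in both completions is that of $\RDoc$. The only cosmetic difference is that, for $\QR\RDoc$, the paper exhibits $\eqrelr\rcomp\bot\rcomp\eqrels$ as the bottom of the subfibre, and by distributivity this is just $\bot$, which is what you observe directly.
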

\begin{proof}
It is enough to check that $\QR\RDoc(\ple{X,\eqrelr},\ple{Y,\eqrels})$ and $\CR\RDoc(\ple{X,\preda},\ple{Y,\predb})$ are closed under binary joins and have a bottom element preserved by the relational composition. 
For $\CR\RDoc$ we have 
$\bot\order\preda\rcomp\bot\rcomp\predb$ and 
$ \relr\lor\rels 
  \order (\preda\rcomp\relr\rcomp\predb) \lor (\preda\rcomp\rels\rcomp\predb) 
  = \preda\rcomp(\relr\lor\rels)\rcomp\predb$, 
where $\relr,\rels \in \CR\RDoc(\ple{X,\preda},\ple{Y,\predb})$. 

For $\QR\RDoc$, given $\relr,\rels\in\QR\RDoc(\ple{X,\eqrelr},\ple{Y,\eqrels})$, we have 
$\eqrelr\rcomp\bot\rcomp\eqrels \order \eqrelr\rcomp\relr\rcomp\eqrels \order \relr$, which thus is the bottom element, and 
$ \eqrelr\rcomp(\relr\lor\rels)\rcomp\eqrels 
  = (\eqrelr\rcomp\relr\rcomp\eqrels) \lor (\eqrelr\rcomp\rels\rcomp\eqrels) 
  \order \relr\lor\rels$, 
as needed. 
\end{proof}

Finally, we show that on distributive relational doctrines the (intensional) comprehension completion preserves quotients. 

\begin{theorem} \label[thm]{thm:qc-distributive} 
If $\RDoc$ is a distributive relational doctrine with quotients, then 
$\CR\RDoc$ has quotients. 
\end{theorem}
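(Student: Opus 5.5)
The natural route is to dualise the proof of \cref{thm:qc-modular}. Modularity gave a way to turn an endorelation into a predicate via $\relr\land\rid_X$. Dually, distributivity turns a symmetric transitive relation into an equivalence relation via $\relr\lor\rid_X$, as observed above.

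Fix a $\CR\RDoc$-equivalence relation $\eqrelr$ on $\ple{X,\preda}$. Since the identity of $\ple{X,\preda}$ is $\preda$, this means $\eqrelr$ is a codescent datum ($\eqrelr\order\preda\rcomp\eqrelr\rcomp\preda$) with $\preda\order\eqrelr$, $\eqrelr\rconv\order\eqrelr$ and $\eqrelr\rcomp\eqrelr\order\eqrelr$. In particular $\eqrelr$ is symmetric and transitive in $\RDoc$, so $\hat\eqrelr=\eqrelr\lor\rid_X$ is an $\RDoc$-equivalence relation on $X$. Let $\fun{q}{X}{W}$ be its effective surjective $\RDoc$-quotient arrow, and set $\preda_\eqrelr=\gr{q}\rconv\rcomp\preda\rcomp\gr{q}$, the image of $\preda$. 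Since $\Ex\reidx{q,q}$ preserves the predicate axioms (with surjectivity of $q$ giving $\preda_\eqrelr\order\rid_W$), $\preda_\eqrelr$ should be a predicate on $W$. The candidate quotient arrow is $\fun{q}{\ple{X,\preda}}{\ple{W,\preda_\eqrelr}}$. It is a well-defined arrow of $\CCt\RDoc$ because $\preda\order\gr{q}\rcomp\gr{q}\rconv\rcomp\preda\rcomp\gr{q}\rcomp\gr{q}\rconv$ holds, using $\rid\order\ker_q$.

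For the universal property, take $\fun{f}{\ple{X,\preda}}{\ple{Z,\predb}}$ with $\eqrelr\order\gr{f}\rcomp\predb\rcomp\gr{f}\rconv$. Since $\predb\order\rid_Z$, we get $\eqrelr\order\ker_f$, hence $\hat\eqrelr\order\ker_f$. The $\RDoc$-quotient then gives a unique $h$ with $f=h\circ q$. It remains to check that $h$ is an arrow $\ple{W,\preda_\eqrelr}\to\ple{Z,\predb}$, i.e.\ $\gr{q}\rconv\rcomp\preda\rcomp\gr{q}\order\gr{h}\rcomp\predb\rcomp\gr{h}\rconv$. This follows by applying $\gr{q}\rconv\rcomp\blank\rcomp\gr{q}$ to $\preda\order\gr{f}\rcomp\predb\rcomp\gr{f}\rconv$ and using surjectivity $\gr{q}\rconv\rcomp\gr{q}=\rid_W$. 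Uniqueness is inherited from $\RDoc$.

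The main obstacle is effectiveness and surjectivity in $\CR\RDoc$. Here kernels are computed with the new identities, so we must show $\eqrelr=\gr{q}\rcomp\preda_\eqrelr\rcomp\gr{q}\rconv=\ker_q\rcomp\preda\rcomp\ker_q=\hat\eqrelr\rcomp\preda\rcomp\hat\eqrelr$. The plan is to expand by distributivity into four terms, $\eqrelr\rcomp\preda\rcomp\eqrelr$, $\eqrelr\rcomp\preda$, $\preda\rcomp\eqrelr$ and $\preda$. Each is below $\eqrelr$ by transitivity and $\preda\order\eqrelr$, while $\eqrelr\order\preda\rcomp\eqrelr\rcomp\preda\order$ the first term by the codescent condition. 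Surjectivity in $\CR\RDoc$ means $\preda_\eqrelr\order\gr{q}\rconv\rcomp\preda\rcomp\gr{q}$, which holds by definition, together with the codescent condition for the graph. This is where I expect the delicate bookkeeping of identities to lie, since graphs in $\CR\RDoc$ are $\preda\rcomp\gr{q}\rcomp\preda_\eqrelr$ rather than $\gr{q}$. I would verify these using the cotransitivity $\preda\order\preda\rcomp\preda$ together with surjectivity of $q$.
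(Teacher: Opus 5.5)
Your proof is correct and follows the paper's argument essentially step for step: the same $\RDoc$-equivalence relation $\hat\eqrelr=\eqrelr\lor\rid_X$, the same predicate $\preda_\eqrelr=\gr{q}\rconv\rcomp\preda\rcomp\gr{q}$ on $W$, the same factorisation through the $\RDoc$-quotient arrow, and effectiveness via distributivity. You also supply the reverse inequality for effectiveness, which the paper leaves implicit. One small correction: the $\CR\RDoc$-kernel of $q$ is $\preda\rcomp\hat\eqrelr\rcomp\preda\rcomp\hat\eqrelr\rcomp\preda$, since the graph is $\preda\rcomp\gr{q}\rcomp\preda_\eqrelr$ as you note at the end. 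This is harmless, because your equality $\hat\eqrelr\rcomp\preda\rcomp\hat\eqrelr=\eqrelr$, together with codescent and $\preda\order\rid_X$, gives $\preda\rcomp\eqrelr\rcomp\preda=\eqrelr$.
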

\begin{proof}
Consider a $\CR\RDoc$-equivalence relation $\eqrelr$ on $\ple{X,\preda}$. 
By definition, it is a symmetric and transitive relation on $X$, hence $\hat\eqrelr = \eqrelr\lor\rid_X$ is an $\RDoc$-equivalence relation on $X$. 
Let $\fun{q}{X}{W}$ be an $\RDoc$-quotient arrow for $\hat\eqrelr$. 
The relation $\preda_\eqrelr = \gr{q}\rconv\rcomp\preda\rcomp\gr{q}$ is an $\RDoc$-predicate on $W$ and so the arrow 
$\fun{q}{\ple{X,\preda}}{\ple{W,\preda_\eqrelr}}$ is a well-defined arrow in $\CCt\RDoc$. 
We claim it is an $\CR\RDoc$-quotient arrow for $\eqrelr$. 
Let $\fun{f}{\ple{X,\preda}}{\ple{Y,\predb}}$ be an arrow in $\CCt\RDoc$ such that $\eqrelr\order\gr{f}\rcomp\gr{f}\rconv$. 
Then, because $\gr{f}\rcomp\gr{f}\rconv$ is reflexive, we have that $\hat\eqrelr\order\gr{f}\rcomp\gr{f}\rconv$ and so, because $q$ is an $\RDoc$-quotient arrow, we get a unique arrow $\fun{h}{W}{Y}$ such that $f = h\circ q$. 
Notice that, since $\preda\order\gr{f}\rcomp\predb\rcomp\gr{f}\rconv$, we deduce that 
$ \preda_\eqrelr 
  = \gr{q}\rconv\rcomp\preda\rcomp\gr{q}
  \order \gr{h}\rcomp\predb\rcomp\gr{h}\rconv$, 
hence $\fun{h}{\ple{W,\preda_\eqrelr}}{\ple{Y,\predb}}$ is a well-defined arrow in $\CCt\RDoc$,  as needed. 

Clearly, $q$ is $\CR\RDoc$-surjective by definition of $\preda_\eqrelr$. 
To check that it is effective, observe that 
the kernel of $q$ is the relation $\preda\rcomp\hat\eqrelr\rcomp\preda\rcomp\hat\eqrelr\rcomp\preda$ and, 
recalling that $\eqrelr$ is reflexive  on $\ple{X,\preda}$ ($\preda\order\eqrelr$), transitive ($\eqrelr\rcomp\eqrelr\order\eqrelr$), and 
relational composition distributes over joins,  we get that  this kernel is below $\eqrelr$ as needed. 
\end{proof}

These results ensure that the 2-monads $\QMnd$ and $\CMnd$ restrict to the full 2-subcategory of $\RDtn$ on distributive relational doctrines and on it 
the composition $\CMnd\circ\QMnd$  is again a 2-monad whose pseudoalgebras are distributive relational doctrines with both quotients and comprehensions. 
Also this condition covers a wide range of examples. 
Notably, all doctrines $\VRel\Qtl$ of $\Qtl$-relations over $\Set$ are distributive and also existential elementary doctrines with disjunctions have the same property. 

Notice that all the results presented so far in this section can be dualized obtaining other sufficient conditions ensuring a good interaction between quotients and comprehensions.
Notably, we have that, 
if the relational doctrine has fibered joins satisfying a dual version of the modularity law, \ie $\relr\rcomp(\rels\lor(\relr\rconv\rcomp\relt))\order (\relr\rcomp\rels)\lor\relt$, then  the quotient completion preserves comprehensions, and, 
if the relational doctrine is meet distributive, \ie the relational composition distributes over finite meets, then 
the comprehension completion preserves quotients.

We conclude this section by comparing $\QR{\CR\RDoc}$ and $\CR{\QR\RDoc}$ when $\RDoc$ is both modular and distributive. 
Recall from \cref{prop:per}  that $\QR{\CR\RDoc}$ is isomorphic to the relational doctrine $\PERR\RDoc$ of partial equivalence relations, 
hence we can safely use this latter for our comparison. 

We start by defining a 1-arrow $\oneAr{F}{\PERR\RDoc}{\CR{\QR\RDoc}}$. 
The functor $\fun{\fn{F}}{\PERC\RDoc}{\CCt{\QR\RDoc}}$ on objects is given by 
$\fn{F}\ple{X,\eqrelr} = \ple{\ple{X,\eqrelr\lor\rid_X},\eqrelr}$ and it is the identity on arrows. 
This is trivially well defined as, being $\eqrelr$ a partial $\RDoc$-equivalence relation on $X$, 
$\eqrelr\lor\rid_X$ is an $\RDoc$-equivalence relation on $X$ (because $\RDoc$ is distributive) and 
$\eqrelr$ is a $\QR\RDoc$-predicate on $\ple{X,\eqrelr\lor\rid_X}$. 
The components of the natural transformation $\lift{F}$ are given by the inclusion: 
if $\relr\in\PERR\RDoc(\ple{X,\eqrelr},\ple{Y,\eqrels})$ we have that 
$\eqrelr\rcomp\relr\rcomp\eqrels \order\relr$ implies $(\eqrelr\lor\rid_X)\rcomp\relr\rcomp(\eqrels\lor\rid_Y)\order\relr$ and 
$\relr\order(\eqrelr\land\rid_X)\rcomp\relr\rcomp(\eqrels\land\rid_Y)$ implies $\relr\order\eqrelr\rcomp\relr\rcomp\eqrels$. 
Recall that a 1-arrow in $\RDtn$   is \emph{fully faithful} if the underlying functor is fully faithful and the natural transformation is an isomorphism. Then, we can prove the following. 

\begin{proposition}
Let $\RDoc$ be a modular and distributive relational doctrine. 
The 1-arrow $\oneAr{F}{\PERR\RDoc}{\CR{\QR\RDoc}}$ is fully faithful and preserves quotients and comprehensions. 
\end{proposition}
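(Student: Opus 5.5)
Three things need checking about $F$: full faithfulness of the underlying functor, that each component $\lift{F}$ is an isomorphism of posets (so in fact an identity), and preservation of quotients and comprehensions. The key tool throughout is the identity established in the proof of \cref{prop:per}: for a partial equivalence relation $\eqrelr$ on $X$ in a modular doctrine,
\[
\eqrelr\order(\eqrelr\land\rid_X)\rcomp\eqrelr
\qquad\text{and}\qquad
\eqrelr\order\eqrelr\rcomp(\eqrelr\land\rid_X).
\]

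\emph{Full faithfulness of $\fn{F}$.} Faithfulness is automatic, since $\fn{F}$ is the identity on arrows. For fullness, take an arrow $\fun{f}{\fn{F}\ple{X,\eqrelr}}{\fn{F}\ple{Y,\eqrels}}$ in $\CCt{\QR\RDoc}$. Unfolding the definitions, this means two things: $f$ is an arrow of $\QC\RDoc$, i.e. $\eqrelr\lor\rid_X\order\gr{f}\rcomp(\eqrels\lor\rid_Y)\rcomp\gr{f}\rconv$; and the predicate condition holds, computed in $\QR\RDoc$ with identity $\eqrelr\lor\rid_X$ and $\QR\RDoc$-reindexing, which I expect reduces to $\eqrelr\order\gr{f}\rcomp\eqrels\rcomp\gr{f}\rconv$ up to composing with the equivalence relations. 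Using $\eqrelr\rcomp\eqrelr\order\eqrelr$ and $(\eqrelr\lor\rid)\rcomp\eqrelr=\eqrelr$ (by distributivity), the second condition is exactly the $\PERC\RDoc$ arrow condition. Conversely, a $\PERC\RDoc$ arrow yields the $\QC\RDoc$ condition by joining with $\rid_X\order\gr{f}\rcomp\gr{f}\rconv$.

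\emph{$\lift{F}$ is an isomorphism.} I need the reverse inclusion: a codescent datum $\relr$ in $\CR{\QR\RDoc}$ between $\fn{F}\ple{X,\eqrelr}$ and $\fn{F}\ple{Y,\eqrels}$ satisfies $(\eqrelr\lor\rid)\rcomp\relr\rcomp(\eqrels\lor\rid)\order\relr$ and $\relr\order\eqrelr\rcomp\relr\rcomp\eqrels$. From the second, $\eqrelr\rcomp\relr\rcomp\eqrels\order(\eqrelr\lor\rid)\rcomp\relr\rcomp(\eqrels\lor\rid)\order\relr$. Substituting the displayed modular identities, $\relr\order\eqrelr\rcomp\relr\rcomp\eqrels\order(\eqrelr\land\rid_X)\rcomp\eqrelr\rcomp\relr\rcomp\eqrels\rcomp(\eqrels\land\rid_Y)\order(\eqrelr\land\rid_X)\rcomp\relr\rcomp(\eqrels\land\rid_Y)$. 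Hence $\relr\in\PERR\RDoc$, and $\lift{F}$ is an identity inclusion that is surjective.

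\emph{Preservation of quotients and comprehensions.} Since $\PERR\RDoc\cong\QR{\CR\RDoc}$ has both structures (\cref{thm:qc-modular}, \cref{thm:qc-distributive}), I will describe them explicitly in $\PERR\RDoc$. The quotient of an equivalence relation $\relr$ on $\ple{X,\eqrelr}$ (so $\eqrelr\order\relr$) is $\fun{\id_X}{\ple{X,\eqrelr}}{\ple{X,\relr}}$. The comprehension of a predicate $\preda$ on $\ple{X,\eqrelr}$ is $\fun{\id_X}{\ple{X,\preda}}{\ple{X,\eqrelr}}$. Both can be checked directly against the universal properties via the arrow condition $\eqrelr\order\gr{f}\rcomp\eqrels\rcomp\gr{f}\rconv$. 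The images under $F$ are identity arrows of the analogous shapes in $\CR{\QR\RDoc}$. For comprehensions, $\fun{\id_X}{\ple{\ple{X,\preda\lor\rid},\preda}}{\ple{\ple{X,\eqrelr\lor\rid},\eqrelr}}$ should be the comprehension of $\preda$ by the explicit description of comprehensions in $\CR{-}$, modulo the fact that the underlying $\QR\RDoc$-object changes its equivalence relation; this needs checking. For quotients I will use the construction in the proof of \cref{thm:qc-distributive}, whose $\QR\RDoc$-quotient of $\hat\relr=\relr\lor(\eqrelr\lor\rid_X)$ is the identity into $\ple{X,\relr\lor\rid_X}$, so the image of $F$ matches up to the canonical choice.

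The main obstacle is this last step. Quotient and comprehension arrows are only determined up to isomorphism, and $F$ is not essentially surjective. So rather than compare with chosen arrows, I will verify the universal property directly in $\CR{\QR\RDoc}$ for an arbitrary test arrow $\fun{g}{\ple{\ple{Y,\eqrels'},\predb}}{\fn{F}\ple{X,\eqrelr}}$ (not necessarily in the image of $F$). Here I must extract the needed inequalities using distributivity to split $\eqrels'=$ ``$\predb$ part'' $\lor$ rest, which is the most delicate calculation.
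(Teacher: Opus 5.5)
Your arguments for full faithfulness of $\fn{F}$ and for $\lift{F}$ being an isomorphism are correct. The latter is exactly the paper's proof, which consists only of that computation and regards everything else as immediate. The gap is the preservation step.

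\textbf{Quotients.} Here you actually had a complete argument and then abandoned it over a non-issue. Apply the construction of \cref{thm:qc-distributive} to the $\CR{\QR\RDoc}$-equivalence relation $\relr$ on $\fn{F}\ple{X,\eqrelr}$. The predicate on the codomain is $(\relr\lor\rid_X)\rcomp\eqrelr\rcomp(\relr\lor\rid_X)$, and it equals $\relr$:
\begin{itemize}
\item ``$\order$'' follows by distributivity, $\eqrelr\order\relr$ and transitivity;
\item ``$\oporder$'' holds because $\relr\order\relr\rcomp(\eqrelr\land\rid_X)\order\relr\rcomp\eqrelr$.
\end{itemize}
So the canonical quotient is literally $\fun{\id_X}{\fn{F}\ple{X,\eqrelr}}{\fn{F}\ple{X,\relr}}$, that is, $\fn{F}$ of your $\PERR\RDoc$-quotient. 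Any other quotient arrow of $\relr$ is this one followed by an isomorphism, and $\fn{F}$ preserves isomorphisms, so quotients are preserved. Essential surjectivity of $\fn{F}$ plays no role.

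\textbf{Comprehensions.} The direct verification you plan cannot be carried out, because $F$ does not preserve comprehensions.
\begin{itemize}
\item $\fn{F}$ sends $\fun{\id_X}{\ple{X,\preda}}{\ple{X,\eqrelr}}$ to $\id_X$ with domain $\ple{\ple{X,\preda\lor\rid_X},\preda}$.
\item The canonical $\CR{\QR\RDoc}$-comprehension of $\lift{F}(\preda)=\preda$ is $\id_X$ with domain $\ple{\ple{X,\eqrelr\lor\rid_X},\preda}$, and its image is below $\preda$.
\item The only possible factorization of the latter through $\fn{F}(\id_X)$ is $\id_X$ itself. This is an arrow of $\QC\RDoc$ only if $\eqrelr\lor\rid_X\order\preda\lor\rid_X$.
\end{itemize}
A counterexample lives in $\Rel$, which is modular and distributive. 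Take $X=\{a,b\}$, $\eqrelr=X\times X$ and $\preda=\bot$. Then $\preda$ is a $\PERR\RDoc$-predicate on $\ple{X,\eqrelr}$, but $X\times X\not\subseteq\rid_X$. Hence $\fn{F}(\id_X)$ is not even a weak comprehension arrow. Since comprehension arrows are unique up to isomorphism, this does not depend on the choice of comprehension in $\PERR\RDoc$.

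The two domains are related precisely by the bijective but non-invertible counit component $\theta$ from the paper's last theorem. So the ``delicate calculation'' splitting the equivalence relation of the test object has no solution. The comprehension clause of the statement is false as stated, and the paper's proof overlooks this by treating preservation as trivial.
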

\begin{proof}
The only non trivial part is that $\lift{F}$ is an isomorphism.
To this end, it suffices to show that 
given $\relr\in\CR{\QR\RDoc}(\fn{F}\ple{X,\eqrelr},\fn{F}\ple{Y,\eqrels})$ we have 
$\relr\in\PERR\RDoc(\ple{X,\eqrelr},\ple{Y,\eqrels})$. 
The fact that $\eqrelr\rcomp\relr\rcomp\eqrels\order\relr$ follows  immediately from 
$(\eqrelr\lor\rid_X)\rcomp\relr\rcomp(\eqrels\lor\rid_Y)\order\relr$ because $\RDoc$ is distributive. 
The fact that $\relr\order(\eqrelr\land\rid_X)\rcomp\relr\rcomp(\eqrels\land\rid_Y)$  follows from $\relr\order\eqrelr\rcomp\relr\rcomp\eqrels$ 
by observing that, 
by the modularity law, we have $\eqrelr\order\eqrelr\land\eqrelr\order(\eqrelr\land\rid_X)\rcomp\eqrelr$ and similarly for $\eqrels$. 
\end{proof}

We can also define a 1-arrow $\oneAr{G}{\CR{\QR\RDoc}}{\PERR\RDoc}$ where 
$\fn{G}\ple{\ple{X,\eqrelr},\preda} = \ple{X,\preda}$ and $\fn{G}(f) = f$ and 
the components of $\lift{G}$ are inclusions. 
The functor $\fn{G}$ is well defined because $\preda$, being an $\QR\RDoc$-predicate, it is  a partial $\RDoc$-equivalence relation. 
Indeed, we have $\preda\rcomp\preda \order \eqrelr\rcomp\preda\order\preda$. 
Moreover, $\lift{G}$ is well defined because, 
given $\relr\in\CR{\QR\RDoc}(\ple{\ple{X,\eqrelr},\preda},\ple{\ple{Y,\eqrels},\predb})$, we have 
$\preda\rcomp\relr\rcomp\predb \order \eqrelr\rcomp\relr\rcomp\eqrels \order\relr$ and 
\begin{align*} 
\relr 
  & \order \preda\rcomp\relr\rcomp\predb 
    \order (\preda\land\rid_X)\rcomp\eqrelr\rcomp\relr\rcomp\eqrels\rcomp(\predb\land\rid_Y)  \\ 
  & \order (\preda\land\rid_X)\rcomp\relr\rcomp(\predb\land\rid_Y)
\end{align*} 
because $\preda \order \preda\land\eqrelr \order ((\preda\rcomp\eqrelr\rconv)\land\rid_X)\rcomp\eqrelr \order (\preda\land\rid_X)\rcomp\eqrelr$ by the modularity law.   
Note that, differently from $F$, in general  $G$ is not fully faithful: 
neither $\fn{G}$ is full nor $\lift{G}$ is an isomorphism. 

It is easy to see that the composition 
$G\circ F$ is the identity on $\PERR\RDoc$. 
On the other hand, we have a 2-arrow 
$\twoAr{\theta}{F\circ G}{\Id_{\CR{\QR\RDoc}}}$ where the component at $\ple{\ple{X,\eqrelr},\preda}$ is the arrow 
$\fun{\id_X}{\ple{\ple{X,\preda\lor\rid_X},\preda}}{\ple{\ple{X,\eqrelr},\preda}}$. 
Hence, the components of $\theta$ are $\CR{\QR\RDoc}$-bijective. 
Alltogether, these observations prove the following result.

\begin{theorem}
Let $\RDoc$ be a modular and distributive relational doctrine. 
Then, $\oneAr{F}{\PERR\RDoc}{\CR{\QR\RDoc}}$ is a left adjoint section of $\oneAr{G}{\CR{\QR\RDoc}}{\PERR\RDoc}$ in $\RDtn$ where the components of the counit are $\CR{\QR\RDoc}$-bijective. 
\end{theorem}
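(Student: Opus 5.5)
Most of the work is already in the paragraphs preceding the statement; what remains is to assemble the pieces into the data of an adjunction in $\RDtn$ and check the triangle identities. Since $\RDtn$ is locally posetal on 2-arrows (2-arrows $F\Rightarrow G$ are natural transformations subject to an inequality condition), the plan is to exhibit a unit and a counit and verify the two triangle equalities componentwise. The unit is the identity $\Id_{\PERR\RDoc} = G\circ F$, which is observed just before the statement. The counit is $\twoAr{\theta}{F\circ G}{\Id_{\CR{\QR\RDoc}}}$ with components $\theta_{\ple{\ple{X,\eqrelr},\preda}} = \id_X$, regarded as an arrow $\ple{\ple{X,\preda\lor\rid_X},\preda}\to\ple{\ple{X,\eqrelr},\preda}$.

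First, I check that $\theta$ is a legitimate 2-arrow. For each component I verify that $\id_X$ is an arrow of $\CCt{\QR\RDoc}$. This requires two facts:
\begin{itemize}
\item $\id_X$ is a $\QR\RDoc$-arrow $\ple{X,\preda\lor\rid_X}\to\ple{X,\eqrelr}$, i.e.\ $\preda\lor\rid_X\order\eqrelr$. This holds since $\preda\order\eqrelr$ ($\preda$ is coreflexive in $\QR\RDoc$) and $\rid_X\order\eqrelr$.
\item The predicate condition $\preda\order\gr{\id}\rcomp\preda\rcomp\gr{\id}\rconv$, which is trivial.
\end{itemize}
Naturality is immediate, because $\fn{F}\fn{G}$ acts as the identity on underlying arrows and every component is an identity. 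The 2-cell inequality $\lift{F\circ G}(\relr)\order \RDoc\reidx{\id,\id}(\relr)$ in the appropriate fibre reduces to the fact that $\lift{F}$ and $\lift{G}$ are inclusions. The reindexing in $\CR{\QR\RDoc}$ along $\id_X$ is $\preda\rcomp\relr\rcomp\predb$, and this dominates $\relr$ because $\relr$ is a codescent datum. So the required inequality holds, and in fact holds with equality.

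Next come the triangle identities. The unit is the identity and $G\circ F=\Id$, so the identities reduce to two checks: $\theta F$ is the identity 2-cell on $F$, and $G\theta$ is the identity on $G$. For the first, $\theta_{F\ple{X,\eqrelr}}$ is $\id_X$ from $\ple{\ple{X,\eqrelr\lor\rid_X},\eqrelr}$ to itself, since $\fn{F}\fn{G}\fn{F}=\fn{F}$. For the second, $\fn{G}(\id_X)=\id_X$ on $\ple{X,\preda}$. Both are identities, so the triangles hold strictly. Hence $F\dashv G$ with identity unit, which makes $F$ a left adjoint section of $G$.

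Finally, I show that the counit components are $\CR{\QR\RDoc}$-bijective. The graph of $\id_X$ in $\CR{\QR\RDoc}$ is computed by reindexing the identity relation of $\ple{\ple{X,\eqrelr},\preda}$, which is $\preda$, giving $\preda\rcomp\preda\rcomp\preda=\preda$. This equals the identity relation of the domain, which is also $\preda$. Hence $\gr{\theta}\rcomp\gr{\theta}\rconv=\preda=\rid$ and $\gr{\theta}\rconv\rcomp\gr{\theta}=\rid$, so each component is both injective and surjective.

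I expect the main obstacle to be bookkeeping. Specifically, one must check that the fibres and reindexing of $\CR{\QR\RDoc}$ are computed correctly, since in the comprehension completion the identity relation is the predicate and reindexing sandwiches relations between predicates. The structure of the argument itself is a direct assembly of the preceding observations.
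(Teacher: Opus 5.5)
Your proposal is correct and follows the paper's route. The unit is the identity because $G\circ F=\Id_{\PERR\RDoc}$, and the counit $\theta$ has components $\id_X\colon\ple{\ple{X,\preda\lor\rid_X},\preda}\to\ple{\ple{X,\eqrelr},\preda}$, which are bijective because their graph is $\preda$, the identity relation of both domain and codomain. You also spell out checks the paper leaves implicit: well-definedness of $\theta$ and its 2-cell inequality, the triangle identities, and $\preda\rcomp\preda\rcomp\preda=\preda$, which holds because $\preda$ is the identity relation of $\ple{\ple{X,\eqrelr},\preda}$ in $\CR{\QR\RDoc}$.
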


This shows that for modular and distributive relational doctrines the compositions $\CMnd\circ\QMnd$ and $\QMnd\circ\CMnd$ are very close to be equivalent. 
Indeed, if we further apply the completion presented in \cite{DagninoP25tac}, which forces bijections to be isomorphisms, 
the two compositions become equivalent.

%% file: conclu.tex
\section{Concluding remarks}
\label{sect:conclu}

In this paper we have shown how, by shifting the perspective from usual predicate logic to the calculus of relations, one can recover a full duality between quotients and comprehensions. 
We have developed our results in the framework of relational doctrines, describing a duality for them that directly exchanges quotients for equivalence relations with comprehensions for coequivalence relations, which we claim to be a good representation of predicates in the relational setting.
We demonstrated that this correspondence lifts to a 2-dual isomorphism between the respective 2-categories, enabling the direct transfer of structural results, such as completions, 2-monadicity, characterizations via injective and projective objects, and factorization systems, between quotients and comprehensions.
Finally, we investigated their interaction, proving a version of the first isomorphism theorem and analyzing the conditions under which quotient and comprehension completions compose.

An intriguing parallel to our work appears in the series of papers on duality in non-abelian algebra by Janelidze, Weighill, and Goswami~\cite{janelidze2014duality1,janelidze2016duality2,janelidze2017duality3,goswami2019duality4}. 
There, the authors develop a self-dual categorical framework for non-abelian algebraic contexts, such as groups and regular varieties. 
By abstracting properties of the bifibration of subgroups over groups, they introduce concepts like quotients of normal subobjects and embeddings of conormal subobjects, which turn out to be dual to each other. 
These clearly resemble our notions of quotients and comprehensions for (co)equivalence relations, with the major difference given by the fact that they take an algebraic point of view, focusing on constructions for subobjects, while we have a logical perspective, rooted in the calculus of relations.
Drawing a formal, precise connection between our relational duality and their algebraic duality is a promising avenue for ongoing and future research.